\newtheorem{theorem}{Theorem}
\newtheorem{corollary}[theorem]{Corollary}
\newtheorem{definition}[theorem]{Definition}
\newtheorem{comments}[theorem]{Comments}
\newtheorem{examples}[theorem]{Examples}
\newtheorem{lemma}[theorem]{Lemma}
\newtheorem{notations}[theorem]{Notations}
\newtheorem{notation}[theorem]{Notation}
\newtheorem{proposition}[theorem]{Proposition}
\newtheorem{remark}[theorem]{Remark}
\renewenvironment{proof}{\textit{Proof}}{${}\hfill\square$}
\begin{document}

\title{Prolongations of convenient Lie algebroids}
\author{Patrick Cabau \& Fernand Pelletier}

\address{Unit\'e Mixte de Recherche 5127 CNRS, Universit\'e  de Savoie Mont Blanc, Laboratoire de Math\'ematiques (LAMA),Campus Scientifique,  73370 Le Bourget-du-Lac, France}
\email{patrickcabau@gmail.com, fernand.pelletier@univ-smb.fr}

\date{}
\maketitle

\begin{abstract} 
We first define the concept of Lie algebroid in the convenient setting. In reference to the finite dimensional context, we adapt the notion of prolongation of a Lie algebroid over a fibred manifold to a convenient Lie algebroid over a fibred manifold. Then we show that this construction is stable under projective and direct limits under adequate assumptions. 
\end{abstract}

\textbf{MSC 2010:} 53D35, 55P35.\\

\textbf{Keywords:} convenient Lie algebroid, prolongation of a convenient Lie algebroid, projective and direct limits of sequences of Banach Lie algebroids.

\section{introduction}
In classical mechanics, the configuration space is a finite dimensional smooth manifold $M$ where the tangent bundle $p_M:TM \to M$ corresponds to the velocity space. This geometrical object plays a relevant role in the Lagrangian formalism between tangent and cotangent bundles (cf. \cite{Kle}). In \cite{Wei}, Weinstein develops a generalized theory of Lagrangian Mechanics on Lie algebroids. In \cite{Lib}, 
Libermann shows that such a formalism is not possible, in general, if we consider the tangent bundle of a Lie  algebroid. The notion of prolongation of a Lie algebroid  introduced by Higgins and Mackenzie \cite{HiMa} offers a nice context in which such a formalism was generalized by Mart\'inez (cf. \cite{Ma1} and \cite{Ma2}).\\

The notion of Lie algebroid in the Banach setting was simultaneously introduced in \cite{Ana} and \cite{Pe1}. Unfortunately, in this setting, there exist many problems to generalizing all canonical Lie structures on a finite dimensional Lie algebroid  and, at first, a nice  definition of a Lie bracket (cf. \cite{CaPe}). In this paper, we consider the more general convenient setting (cf. \cite{KrMi}) in which   we give a definition of a convenient  Lie algebroid (and more generally a partial convenient Lie algebroid) based on a precise notion of sheaf of Lie brackets
\footnote{cf. Remark \ref{R_LocalGlobal}} on a convenient anchored  bundle (cf. section \ref{___AlmostLieAlgebroidAndLieAlgebroid}).\\
As in finite dimension, given a convenient anchored bundle $ \left( \mathcal{A},\pi, M,\rho \right) $,
\footnote{The anchor $\rho$ is a vector bundle morphism $\rho: \mathcal{A} \to TM$.}
 the total space of the  prolongation $
\hat{\bf p}:\mathbf{T}\mathcal{M}\to \mathcal{M}$ of $ \left( \mathcal{A},\pi, M,\rho \right) $ over a fibred manifold $\mathbf{p}:\mathcal{M}\to M$, is the pullback over $\rho$ of the bundle 
$T\mathbf{p}: T\mathcal{M} \to TM$. Moreover, we have an anchor $\hat{\rho}: \mathbf{T}\mathcal{M} \to T\mathcal{M}$. \\
In finite dimension, the Lie bracket on $\mathcal{A}$ gives rise to a Lie bracket on  $\mathbf{T}\mathcal{M}$. Unfortunately, this is not any more true in infinite dimension. If $\tilde{\pi}:\widetilde{\mathcal{A}}\to \mathcal{M}$ is the pullback of $\pi:\mathcal{A}\to M$, over $\mathbf{p}$, then the module of local sections of $\widetilde{\mathcal{A}}$ is no longer finitely generated by local sections along $\mathbf{p}$. For this reason, the way to define the prolongation of the bracket does not work as in finite dimension. Thus the prolongation of such a Lie algebroid is not again a Lie algebroid but only a strong partial Lie algebroid (cf. $\S$ \ref{___StructureofPartialLieAlgebroid}). We also define (non linear)  connections on such a prolongation.  As for the tangent bundle of a Banach vector bundle (cf. \cite{AgSu}), we then show that the kernel bundle of  $\hat{\bf p}:\mathbf{T}\mathcal{M}\to \mathcal{M}$ is split if and only if there exists a linear connection on $\mathcal{M}$.\\
In the Banach setting, it is proved that if the kernel of the anchor $\rho$ of a Banach Lie algebroid $(\mathcal{A}, \pi, M,\rho, [.,.]_\mathcal{A})$ is split and its range is closed, then the associated distribution on $M$ defines a (singular) foliation (cf. \cite{Pe1}). Under these assumptions, we show that the prolongation $ \left( \mathbf{T}\mathcal{A}, \hat{\mathbf{p}}, \mathcal{A}, \hat{\rho} \right) $ has the same properties, and the foliation defined by $\hat{\rho}(\mathbf{T}\mathcal{A})$ on $\mathcal{A}$ is exactly  the set 
$\{\mathcal{A}_{| L}, \; L \textrm{ leaf of } \rho(\mathcal{A})\}$ (cf. Theorem \ref{T_tildefol}).\\
As an illustration of these notions,  in the convenient setting, and not only in a Banach one, we end this work by the prolongation of projective (resp. direct) limits of sequences of projective (resp. ascending) sequences of fibred Banach Lie 
algebroids with finite or infinite dimensional fibres. \\

\emph{This work can be understood as the basis for further studies on  how the  Lagrangian formalism  on finite dimensional Lie algebroid (cf. \cite{Ma2} for instance) can be generalized in this convenient framework.}\\

Section \ref{_ConvenientLieAlgebroid} is devoted to the presentation of the prerequisities needed in this paper about (partial)  convenient Lie algebroids. After some preliminaries on notations, we introduce the notion of convenient anchored bundle. Then we define a notion of  almost bracket on such a vector bundle. The definition of a  convenient Lie algebroid and some of its properties are given in subsection \ref{___AlmostLieAlgebroidAndLieAlgebroid}. The next subsection exposes the concept of partial convenient Lie algebroid. The following subsection is devoted to the definitions of some derivative operators (Lie derivative with respect to some sheaf of $k$-forms sections and exterior derivative), in particular of strong  partial Lie algebroids. The last subsection recall some results about integrability of Banach Lie algebroids when the Banach Lie algebroid is split and the range of its  anchor is closed (cf \cite{Pe2}).\\
The important part of this work is contained in section \ref{__ProlongationOfAConvenientLieAlgebroidAlongAFibration}. In the first subsection, we build the prolongation of a convenient  anchored bundle. Then, in subsection \ref{___ProlongationOfTheLieBracket}, we can define a Lie bracket on local projectable sections of the total space of the prolongation. We then explain why this bracket cannot be extended to the whole set of local sections if the typical fibre of the anchored bundle is not finite dimensional, which is the essential difference with the finite dimensional setting. We end this section by showing that if a Banach Lie algebroid is split and the range of its anchor is closed, the same is true for its prolongation and the range of the anchor of the prolongation defines also a foliation even if its Lie bracket is not defined on the set of all local sections.\\
The two last sections show that, under adequate assumptions, the prolongation of a projective (resp. direct) limit of a projective sequence (resp. ascending sequence) of Banach Lie algebroids is exactly the prolongation of the projective (resp. direct) limit of this sequence. \\

In order to make the two last sections more affordable for non-informed readers, we have added two appendices recalling the needed concepts and results on projective and direct limits.

\section{Convenient Lie algebroid}
\label{_ConvenientLieAlgebroid}
\subsection{Local identifications and expressions in a convenient bundle}${}$\\
\label{___LocalIdentificationsAndExpressionsInAConvenientBundle}

\emph{In all this paper, we work in the convenient setting and we refer to \cite{KrMi}.}\\

Consider a convenient vector bundle $\pi: \mathcal{A}\to M$ where the typical fibre is a convenient linear space $\mathbb{A}$. For any open subset $U\subset M$,  we denote by $C^\infty(U)$ the ring of smooth functions on $U$ and  by $\Gamma\left(  \mathcal{A}_U \right) $ the $C^\infty(U)$-module of smooth sections of the restriction $ \mathcal{A}_U$ of $\mathcal{A}$ over $U$, simply  $\Gamma(\mathcal{A})$ when $U=M$.\\
Consider a chart  $(U,\phi)$ on $M$ such that we have a trivialization $\tau:{ \mathcal{A}}_U\to \psi(U)\times \mathbb{A} $
Then $T\phi$ is a trivialization of $TM_U$  on $\phi(U)\times \mathbb{M}$ and
 $T\tau$ is a trivialization of $T \mathcal{A}_U$ on $\phi(U)\times \mathbb{A}\times\mathbb{M}\times\mathbb{A}$.\\
For the sake of simplicity, we will denote these trivializations:
\begin{description}
\item[--]
	$ \mathcal{A}_U= \mathcal{A} _{|U}\equiv \phi(U)\times\mathbb{\mathbb{A} }$;
\item[--]
	$TM_U=TM_{|U}\equiv \phi(U)\times\mathbb{M}$;
\item[--]
	$T \mathcal{A}_U=T \mathcal{A}_{|{ \mathcal{A}_{(U)}}}\equiv(\phi(U)\times{\mathbb{A} })\times(\mathbb{M}\times{\mathbb{A} })$;
\item[--]
	$T \mathcal{A}^*_{|  \mathcal{A}^*_U}\equiv \phi(U)\times\mathbb{A}^{*}\times\mathbb{M}\times \mathbb{A}^*$.
\end{description}
where $U \subset M$ is identified with $\psi(U)$.\\

We will also use the following associated local coordinates where $\equiv$ stands for the representation in the corresponding trivialization.
\begin{description}
\item[]
	$\mathfrak{a}=(x,a)\equiv(\mathsf{x,a})\in U\times \mathbb{A}$
\item[]
	$(x,v)\equiv (\mathsf{x,v})\in U\times\mathbb{M}$
\item[]
	$(\mathfrak{a},\mathfrak{b})\equiv(\mathsf{x,a, v ,b})\in  U\times\mathbb{A}\times\mathbb{M}\times \mathbb{A}.$
\item[]
	$(\sigma,w \eta)\equiv (\mathsf{x,\xi,w_1,\eta_2})\in U\times\mathbb{A}^{*}\times\mathbb{M}\times \mathbb{A}^*$
\end{description}

\subsection{Convenient anchored bundle}
\label{___ConvenientAnchoredBundle}

Let $\pi:\mathcal{A}\to M$ be a convenient vector bundle whose fibre is a convenient linear space $\mathbb{A}$.
\begin{definition}
\label{D_Anchor}
A morphism of vector bundles $\rho:\mathcal{A}\to TM$ is called an \textit{anchor} and the quadruple $ \left( \mathcal{A},\pi,M,\rho \right) $ is called a convenient anchored bundle\index{convenient!anchored bundle}.
\end{definition}

\begin{notations}
\label{N_Anchor}
${}$
\begin{enumerate}
\item
In this section, if  there is no ambiguity, the anchored bundle  $(\mathcal{A},\pi,M,\rho)$ is fixed and, in all this work, the Lie bracket of vector fields on a convenient manifold will be simply denoted  $[.,.]$.
\item
For any open set $U$ in $M$, the morphism $\rho$ gives rise to a $C^\infty(U)$-morphism of modules ${\rho}_U:\Gamma\left(  \mathcal{A}_U\right) \to \mathfrak{X}(U) $ defined, for any $x \in M$ and any smooth section $\mathfrak{a}$ of $\mathcal{A}_U$, by:
\[
\left(  {\rho}_U\left(  \mathfrak{a} \right) \right)  \left(  x\right)  =\rho\left(  \mathfrak{a}\left(  x\right)  \right)
\]
and still denoted by $\rho$.
\item
For any convenient spaces $\mathbb{E}$ and $\mathbb{F}$, we denote by $\operatorname{L}(\mathbb{E},\mathbb{F})$, the convenient space of bounded linear operators from $\mathbb{E}$ to $\mathbb{F}$ and for $\mathbb{E}=\mathbb{F}$, we set   $\operatorname{L}(\mathbb{E}):=\operatorname{L}(\mathbb{E},\mathbb{F})$; $\operatorname{GL}(\mathbb{E})$ is the group of bounded automorphisms of $\mathbb{E}$.
\item
In local coordinates in a chart $(U,\phi)$, the restriction of $\rho$ to $U$ will give rise to a smooth field $\mathsf{x} \mapsto \mathsf{\rho_x}$ from $\phi(U)\equiv U$ to $\operatorname{L}(\mathbb{A},\mathbb{M})$.
\end{enumerate}
\end{notations}

\subsection{Almost Lie bracket}
\label{___AlmostLieBracket}
\begin{definition}
\label{D_AlmostLieBracketOnAnAnchoredBundle}
An almost Lie bracket on an anchored bundle $\mathcal{A}$ is a sheaf of skew-symmetric bilinear maps
\[
\lbrack.,.]_{\mathcal{A}_U}:\Gamma\left(  \mathcal{A}_U\right)  \times\Gamma\left(  \mathcal{A}_U\right)
\to\Gamma\left(  \mathcal{A}_U\right)
\]
for any open set $U\subseteq M$ which satisfies the following properties:
\begin{enumerate}
\item [\textbf{(AL 1)}]
The Leibniz identity:\index{Leibniz identity}
\[
\forall\left(  \mathfrak{a}_{1},\mathfrak{a}_{2}\right)  \in \Gamma \left( \mathcal{A}_U \right)  ^{2}, \forall f \in C^\infty(M)
,\ [\mathfrak{a}_{1},f\mathfrak{a}_{2}]_{\mathcal{A}}=f.[\mathfrak{a}_{1},\mathfrak{a}_{2}]_{\mathcal{A}}+df(\rho(\mathfrak{a}_{1})).\mathfrak{a}_{2}.
\]
\item[\textbf{(AL 2)}]
For any  open set $U\subseteq M$  the map
\[
(\mathfrak{a}_1,\mathfrak{a}_2)\mapsto [\mathfrak{a}_1,\mathfrak{a}_2]_{\mathcal{A}_U}
\]
only depends on the $1$-jets of $\mathfrak{a}_1$ and $\mathfrak{a}_2$ of sections of $\mathcal{A}_U$.
\end{enumerate}
By abuse of notation, such a sheaf of almost Lie brackets will be denoted $[.,.]_\mathcal{A}$.
\end{definition}

\begin{remark}
\label{R_LocalGlobal} 
In finite dimension, the bracket is defined on global sections and induces a Lie bracket on local sections which depend on the $1$-jets of sections. In the convenient setting (as in the Banach one), if $M$ is not smoothly regular, the  set  of restrictions to some open set $U$ of global sections of $\mathcal{A}$ could be different from $\Gamma(\mathcal{A}_U)$ but, unfortunately, we have no example of such a situation.  Thus, any bracket defined on the whole space $\Gamma(\mathcal{A})\times \Gamma(\mathcal{A})$ will not give rise to a bracket on local  sections of $\mathcal{A}$ and, even if it is true, the condition \emph{\textbf{(AL 2)}} will not be true in general.
\end{remark}

In the context of local trivializations ($\S$ \ref{___LocalIdentificationsAndExpressionsInAConvenientBundle}), if $\operatorname{L}^2_{\operatorname{alt}}(\mathbb{A};\mathbb{A}) $ is the convenient space of bounded skew-symmetric operators on $\mathbb{A}$ with values in $\mathbb{A}$, there exists a smooth field
\[
\begin{array}{cccc}
\mathsf{C}:    & U             & \rightarrow   &\operatorname{L}_{\operatorname{alt}}^2(\mathbb{A},\mathbb{A}) \\
	           & \mathsf{x}    & \mapsto       & \mathsf{C}_\mathsf{x}
\end{array}
\]
such that, for $\mathfrak{a}_1(x)\equiv(\mathsf{x},\mathsf{a_1(x)})$ and $\mathfrak{a}_2(x)\equiv(\mathsf{x,a_2(x)})$, we have:
\begin{eqnarray}
\label{eq_loctrivct}
[\mathfrak{a}_1,\mathfrak{a}_2]_U(x) 
\equiv (\mathsf{x,C_x(a_1(x),a_2(x)})+d \mathsf{a_2(\rho_x(a_1(x)))}-d \mathsf{a_1(\rho_x(a_2(x)))}).
\end{eqnarray}

\subsection{Almost Lie algebroid and Lie algebroid}
\label{___AlmostLieAlgebroidAndLieAlgebroid}
\begin{definition}
\label{D_AlmostLieAlgebroid}
The quintuple $ \left( \mathcal{A},\pi,M,\rho,[.,.]_{\mathcal{A}} \right) $ where $ \left( \mathcal{A},\pi,M,\rho \right) $ is an anchored bundle and $[.,.]_{\mathcal{A}}$ an almost Lie bracket is called a convenient almost Lie algebroid\index{convenient almost Lie algebroid}\index{almost Lie algebroid!convenient}.
\end{definition}

In this way, the \emph{Jacobiator}\index{Jacobiator}  is the $\mathbb{R}$-trilinear
map $J_{\mathcal{A}_U}:\Gamma(\mathcal{A}_{U})^{3}\to\Gamma(\mathcal{A}_{ U})$ defined, for any open set $U$ in $M$ and any section $\left(  \mathfrak{a}_{1}, \mathfrak{a}_{2}, \mathfrak{a}_{3} \right) \in \Gamma(\mathcal{A}_{U})^3$ by
\[
J_{\mathcal{A}_U}(\mathfrak{a}_{1},\mathfrak{a}_{2},\mathfrak{a}_{3}
)=[\mathfrak{a}_{1},[\mathfrak{a}_{2},\mathfrak{a}_{3}]_{\mathcal{A}}]_{\mathcal{A}}+[\mathfrak{a}_{2},[\mathfrak{a}_{3},\mathfrak{a}_{1}]_{\mathcal{A}}]_{\mathcal{A}}+[\mathfrak{a}_{3},[\mathfrak{a}_{1},\mathfrak{a}_{2}]_{\mathcal{A}}]_{\mathcal{A}}.
\]

\begin{definition}
\label{D_ConvenientLieAlgebroid}
A  convenient Lie algebroid \index{convenient Lie algebroid}\index{Lie algebroid!convenient} is a convenient almost Lie algebroid $ \left( \mathcal{A},\pi,M,\rho ,[.,.]_{\mathcal{A}} \right) $ such that the associated jacobiator $J_{\mathcal{A}_U}$ vanishes identically on each module $\Gamma(\mathcal{A}_{U})$ for all open sets  $U$  in $M$.
\end{definition}

We then have the following result (cf. \cite{BCP}, Chapter 3):
\begin{proposition}
\label{P_EquivalenceMorphismJEtensor} 
Consider an  almost convenient Lie algebroid  $\left( \mathcal{A},\pi,M,\rho ,[.,.]_{\mathcal{A}} \right) $.
\begin{enumerate}
\item
For any open set $U\subseteq M$  and for all $\left( \mathfrak{a}_{1},\mathfrak{a}_{2} \right) \in \Gamma\left( \mathcal{A}_{ U} \right) ^2$, the map
\[
\left( \mathfrak{a}_{1},\mathfrak{a}_{2} \right)\mapsto \rho \left( [\mathfrak{a}_1,\mathfrak{a}_2]_\mathcal{A} \right) -[\rho(\mathfrak{a}_1), \rho(\mathfrak{a}_2)]
\]
only depends on the $1$-jet of $\rho$ at any $x\in U$ and the values of $\mathfrak{a}_1$ and $\mathfrak{a}_2$ at $x$.
\item
If the  jacobiator $J_{\mathcal{A}_U}$ vanishes identically,  then  we have:
\begin{equation}
\label{eq_rhoCompatible}
\forall \left( \mathfrak{a}_{1},\mathfrak{a}_{2} \right) \in \Gamma\left( \mathcal{A}_{ U} \right) ^2,\; \rho \left( [\mathfrak{a}_1,\mathfrak{a}_2]_\mathcal{A} \right) =[\rho(\mathfrak{a}_1), \rho(\mathfrak{a}_2)].
\end{equation}
\item
If the property (\ref{eq_rhoCompatible}) is true, then $J_{\mathcal{A}_U}$  is a bounded trilinear $C^\infty(U)$-morphism from $ \Gamma\left( \mathcal{A}_U \right)^3$ to  $\Gamma\left( \mathcal{A}_U \right)$ which takes values in $\ker \rho$ over $U$.\\
\end{enumerate}
\end{proposition}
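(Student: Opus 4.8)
The plan is to introduce the \emph{anchor defect}
\[
\Phi(\mathfrak{a}_1,\mathfrak{a}_2):=\rho\left([\mathfrak{a}_1,\mathfrak{a}_2]_{\mathcal{A}}\right)-[\rho(\mathfrak{a}_1),\rho(\mathfrak{a}_2)]\in\mathfrak{X}(U),
\]
and to prove successively that it is tensorial, that it vanishes when $J_{\mathcal{A}_U}\equiv 0$, and that its vanishing makes $J_{\mathcal{A}_U}$ tensorial with values in $\ker\rho$. For item (1), since both $[.,.]_{\mathcal{A}}$ and the vector field bracket are skew-symmetric, so is $\Phi$, and it suffices to test $C^\infty(U)$-linearity in the second slot. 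Replacing $\mathfrak{a}_2$ by $f\mathfrak{a}_2$ and expanding with the Leibniz identity \textbf{(AL 1)} on the first term and the Leibniz rule for vector fields on the second, the two derivative contributions $df(\rho(\mathfrak{a}_1))\,\rho(\mathfrak{a}_2)$ produced on each side cancel, leaving $\Phi(\mathfrak{a}_1,f\mathfrak{a}_2)=f\,\Phi(\mathfrak{a}_1,\mathfrak{a}_2)$; hence $\Phi$ is $C^\infty(U)$-bilinear and thus pointwise in $\mathfrak{a}_1,\mathfrak{a}_2$. To exhibit the dependence on $\rho$ I would then pass to the local expression (\ref{eq_loctrivct}): applying $\rho_\mathsf{x}$ to the fibre component of the bracket and writing the vector field bracket $[\rho(\mathfrak{a}_1),\rho(\mathfrak{a}_2)]$ through the product rule, all terms containing the derivatives $d\mathsf{a}_i$ cancel and what remains,
\[
\Phi(\mathfrak{a}_1,\mathfrak{a}_2)(x)\equiv\rho_\mathsf{x}\big(\mathsf{C}_\mathsf{x}(\mathsf{a}_1,\mathsf{a}_2)\big)-\big(D\rho_\mathsf{x}\cdot\rho_\mathsf{x}(\mathsf{a}_1)\big)(\mathsf{a}_2)+\big(D\rho_\mathsf{x}\cdot\rho_\mathsf{x}(\mathsf{a}_2)\big)(\mathsf{a}_1),
\]
depends only on the values $\mathsf{a}_1(x),\mathsf{a}_2(x)$ and on $\rho_\mathsf{x},D\rho_\mathsf{x}$ (with $D$ the differentiation of the local field $\mathsf{x}\mapsto\rho_\mathsf{x}$), i.e. on the $1$-jet of $\rho$ at $x$.

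For item (2), the central computation is the expansion of $J_{\mathcal{A}_U}(\mathfrak{a}_1,\mathfrak{a}_2,f\mathfrak{a}_3)$ for $f\in C^\infty(U)$ by repeated use of \textbf{(AL 1)}. Writing $g=\rho(\mathfrak{a}_2)f$ and $h=\rho(\mathfrak{a}_1)f$, the term proportional to $f$ reconstitutes $f\,J_{\mathcal{A}_U}(\mathfrak{a}_1,\mathfrak{a}_2,\mathfrak{a}_3)$, every term carrying a single inner bracket $[\mathfrak{a}_i,\mathfrak{a}_j]_{\mathcal{A}}$ cancels against its partner, and the surviving $\mathfrak{a}_3$-proportional terms combine, via $\rho(\mathfrak{a}_1)(\rho(\mathfrak{a}_2)f)-\rho(\mathfrak{a}_2)(\rho(\mathfrak{a}_1)f)=[\rho(\mathfrak{a}_1),\rho(\mathfrak{a}_2)](f)$, into the master identity
\[
J_{\mathcal{A}_U}(\mathfrak{a}_1,\mathfrak{a}_2,f\mathfrak{a}_3)=f\,J_{\mathcal{A}_U}(\mathfrak{a}_1,\mathfrak{a}_2,\mathfrak{a}_3)-\big(\Phi(\mathfrak{a}_1,\mathfrak{a}_2)(f)\big)\mathfrak{a}_3.
\]
If $J_{\mathcal{A}_U}\equiv 0$ both Jacobiator terms vanish, so $\big(\Phi(\mathfrak{a}_1,\mathfrak{a}_2)(f)\big)\mathfrak{a}_3=0$ for all $f$ and all $\mathfrak{a}_3$; choosing $\mathfrak{a}_3$ non-vanishing at a point gives $\Phi(\mathfrak{a}_1,\mathfrak{a}_2)(f)=0$ for every $f$, and since on a convenient manifold the differentials of smooth (indeed bounded linear) functions separate tangent vectors, $\Phi(\mathfrak{a}_1,\mathfrak{a}_2)=0$, which is precisely (\ref{eq_rhoCompatible}).

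For item (3), I would read the same master identity without assuming $J_{\mathcal{A}_U}\equiv 0$: under (\ref{eq_rhoCompatible}) the defect $\Phi$ vanishes, so $J_{\mathcal{A}_U}(\mathfrak{a}_1,\mathfrak{a}_2,f\mathfrak{a}_3)=f\,J_{\mathcal{A}_U}(\mathfrak{a}_1,\mathfrak{a}_2,\mathfrak{a}_3)$, i.e. $J_{\mathcal{A}_U}$ is $C^\infty(U)$-linear in its third argument. The Jacobiator is totally skew-symmetric, since the skew-symmetry of $[.,.]_{\mathcal{A}}$ applied to the nested brackets turns a transposition of the arguments into an overall sign; hence linearity propagates to all three slots and $J_{\mathcal{A}_U}$ is $C^\infty(U)$-trilinear, its boundedness following from that of $\rho$ and of $[.,.]_{\mathcal{A}}$. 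Finally, applying $\rho$ and using that, by (\ref{eq_rhoCompatible}), $\rho$ is a bracket morphism, one gets $\rho\big(J_{\mathcal{A}_U}(\mathfrak{a}_1,\mathfrak{a}_2,\mathfrak{a}_3)\big)=[\rho(\mathfrak{a}_1),[\rho(\mathfrak{a}_2),\rho(\mathfrak{a}_3)]]+\text{(cyclic)}=0$ by the Jacobi identity in $\mathfrak{X}(U)$; hence $J_{\mathcal{A}_U}$ takes its values in $\ker\rho$ over $U$.

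I expect the main obstacle to be the bookkeeping of the Leibniz terms in item (2) leading to the master identity, together with the convenient-setting point that a vector field annihilating every smooth function must vanish: this is exactly where separation of tangent vectors by the bornological dual is invoked, and where condition \textbf{(AL 2)} is needed to guarantee that all the expressions above are well defined pointwise.
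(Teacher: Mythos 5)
Your proof is correct, and the first thing to note is that the paper itself contains no proof of this proposition: it is quoted from \cite{BCP}, Chapter~3, so there is no internal argument to compare against. Your route is the standard one and all the computations check out: the defect $\Phi(\mathfrak{a}_1,\mathfrak{a}_2)=\rho([\mathfrak{a}_1,\mathfrak{a}_2]_{\mathcal{A}})-[\rho(\mathfrak{a}_1),\rho(\mathfrak{a}_2)]$ is indeed $C^\infty(U)$-bilinear by \textbf{(AL~1)} plus the Leibniz rule for vector fields, your local formula $\Phi\equiv\rho_\mathsf{x}(\mathsf{C}_\mathsf{x}(\mathsf{a}_1,\mathsf{a}_2))-(D\rho_\mathsf{x}\cdot\rho_\mathsf{x}(\mathsf{a}_1))(\mathsf{a}_2)+(D\rho_\mathsf{x}\cdot\rho_\mathsf{x}(\mathsf{a}_2))(\mathsf{a}_1)$ follows from (\ref{eq_loctrivct}) after the cancellation of the $d\mathsf{a}_i$ terms, the master identity $J_{\mathcal{A}_U}(\mathfrak{a}_1,\mathfrak{a}_2,f\mathfrak{a}_3)=f\,J_{\mathcal{A}_U}(\mathfrak{a}_1,\mathfrak{a}_2,\mathfrak{a}_3)-\bigl(\Phi(\mathfrak{a}_1,\mathfrak{a}_2)(f)\bigr)\mathfrak{a}_3$ is exact (I verified the Leibniz bookkeeping), total skew-symmetry of $J_{\mathcal{A}_U}$ propagates the tensoriality to all slots, and the Jacobi identity in $\mathfrak{X}(U)$ gives $\rho\circ J_{\mathcal{A}_U}=0$.

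One step deserves more care, and you half-acknowledge it at the end. To pass from ``$\bigl(\Phi(\mathfrak{a}_1,\mathfrak{a}_2)(f)\bigr)\mathfrak{a}_3=0$ for all $f\in C^\infty(U)$ and $\mathfrak{a}_3\in\Gamma(\mathcal{A}_U)$'' to the pointwise vanishing $\Phi(\mathfrak{a}_1,\mathfrak{a}_2)(x)=0$, you need a section of $\mathcal{A}_U$ non-vanishing at $x$ and a family of functions \emph{defined on all of $U$} whose differentials separate $T_xM$. If $M$ is not smoothly regular (the very issue raised in Remark \ref{R_LocalGlobal}), such objects are only guaranteed on a trivializing chart domain $V\ni x$, where constant sections and the compositions $\ell\circ\phi$ with $\ell$ in the bornological dual of the model are available; they need not extend to $U$. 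The repair is short but should be said: the bracket is a \emph{sheaf} of brackets, the hypothesis of (2) should be read sheaf-wise as in Definition \ref{D_ConvenientLieAlgebroid} (vanishing of $J_{\mathcal{A}_V}$ for every open $V$), and by your item (1) the tensor $\Phi$ is computed at $x$ by the same local formula from $\mathsf{C}_\mathsf{x}$ and the $1$-jet of $\rho$ whether one evaluates it on $U$-sections or on $V$-sections, so running the master identity on $V$ yields $\Phi\equiv 0$ everywhere. Likewise, the boundedness claim in (3) is best justified not by vague appeal to boundedness of $\rho$ and $[.,.]_{\mathcal{A}}$ but, as the paper does for the Lie derivative in Proposition \ref{P_Liekform}, by writing $J_{\mathcal{A}_U}$ pointwise in a trivialization as a composition of the bounded fields $\mathsf{x}\mapsto\mathsf{C}_\mathsf{x}$, $\mathsf{x}\mapsto\rho_\mathsf{x}$ and their first derivatives, and invoking the uniform boundedness principle of \cite{KrMi}. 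With these two clarifications your argument is complete.
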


If for each open set $U$, the assumption (2) of Proposition \ref{P_EquivalenceMorphismJEtensor} is satisfied, (3) implies that the family $\{ J_{\mathcal{A}_U}, U\textrm{ open  in } M \}$ defines a sheaf of trilinear morphisms from the sheaf $\{ (\Gamma(A_U))^3, U \textrm{ open  in } M\}$ into the sheaf $\{ \Gamma(A_U), U \textrm{ open  in } M\}$. This sheaf will be denoted  $J_{\mathcal{A}}$.

\begin{corollary}
\label{C_rhoLieAlgebraMorphism}
If $ \left( \mathcal{A},\pi,M,\rho,[.,.]_{\mathcal{A}} \right) $ is a convenient Lie algebroid, then $\rho$ induces a morphism of Lie algebras from $\Gamma(\mathcal{A}_{ U})$ into $\mathfrak{X}(U)$ for any open sets $U$ in $M$.
\end{corollary}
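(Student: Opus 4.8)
The plan is to check directly that, for each open $U \subseteq M$, the anchor $\rho : \Gamma(\mathcal{A}_U) \to \mathfrak{X}(U)$ of Notation \ref{N_Anchor} is a morphism of Lie algebras, i.e.\ that it is $\mathbb{R}$-linear and intertwines the brackets. First I would record that both sides genuinely carry a Lie algebra structure: the target $\mathfrak{X}(U)$ is the usual Lie algebra of vector fields on $U$, while the source $\Gamma(\mathcal{A}_U)$ is a Lie algebra over $\mathbb{R}$ because $[.,.]_{\mathcal{A}}$ is skew-symmetric and bilinear by Definition \ref{D_AlmostLieBracketOnAnAnchoredBundle} and satisfies the Jacobi identity precisely because the Jacobiator $J_{\mathcal{A}_U}$ vanishes identically, which is the defining hypothesis of a convenient Lie algebroid (Definition \ref{D_ConvenientLieAlgebroid}).

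The $\mathbb{R}$-linearity of $\rho$ is immediate: by Notation \ref{N_Anchor}(2) the anchor induces a $C^\infty(U)$-morphism of modules $\rho : \Gamma(\mathcal{A}_U) \to \mathfrak{X}(U)$, and in particular it is $\mathbb{R}$-linear. It then remains only to establish bracket compatibility, namely
\[
\rho\bigl([\mathfrak{a}_1,\mathfrak{a}_2]_{\mathcal{A}}\bigr) = [\rho(\mathfrak{a}_1),\rho(\mathfrak{a}_2)]
\quad\text{for all } (\mathfrak{a}_1,\mathfrak{a}_2)\in\Gamma(\mathcal{A}_U)^2.
\]
But this is exactly equation (\ref{eq_rhoCompatible}), and Proposition \ref{P_EquivalenceMorphismJEtensor}(2) asserts that it holds as soon as the Jacobiator vanishes. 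Since we are assuming $\left( \mathcal{A},\pi,M,\rho,[.,.]_{\mathcal{A}} \right)$ is a convenient Lie algebroid, that hypothesis is met, and the identity follows. Combining the linearity with this compatibility shows that $\rho$ is a Lie algebra morphism on each $\Gamma(\mathcal{A}_U)$, and compatibility with restrictions yields the statement for every open set $U$.

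The honest remark is that there is no real obstacle here: the substantive work is already carried out in Proposition \ref{P_EquivalenceMorphismJEtensor} (proved in \cite{BCP}), whose part (2) turns the vanishing of the Jacobiator into the anchor--bracket compatibility. The corollary is therefore essentially bookkeeping, the only point requiring attention being to notice that the definition of a convenient Lie algebroid supplies \emph{exactly} the vanishing-Jacobiator condition needed to invoke that proposition. If one preferred a self-contained verification one could instead read the compatibility off the local expression (\ref{eq_loctrivct}) in a trivialization, but this is unnecessary given the proposition.
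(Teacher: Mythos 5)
Your proof is correct and follows exactly the route the paper intends: the corollary is stated right after Proposition \ref{P_EquivalenceMorphismJEtensor} with no separate proof, precisely because part (2) of that proposition (vanishing Jacobiator implies $\rho([\mathfrak{a}_1,\mathfrak{a}_2]_{\mathcal{A}})=[\rho(\mathfrak{a}_1),\rho(\mathfrak{a}_2)]$) together with the $\mathbb{R}$-linearity from Notation \ref{N_Anchor}(2) is the whole argument. Your additional bookkeeping — checking that $\Gamma(\mathcal{A}_U)$ is indeed a Lie algebra via the vanishing Jacobiator — is a reasonable explicit supplement to what the paper leaves implicit.
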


\begin{definition}
\label{D_SplitConvenientLieAlgebroid}
A convenient Lie algebroid $\left( \mathcal{A},\pi,M,\rho,[.,.]_{\mathcal{A}} \right) $  will be called split\index{split convenient Lie algebroid} if, for each $x\in {M}$, the kernel of $\rho_x=\rho_{| \pi^{-1}(x)}$ is supplemented in $ \pi^{-1}(x)$.
\end{definition}
For example, if $ \operatorname{ker}\rho_x$ is finite dimensional or finite codimensional for all $x\in M$ or if $\mathbb{A}$ is a Hilbert space, then $(\mathcal{A},\pi,M,\rho,[.,.]_{\mathcal{A}})$ is split. Another particular situation is the case where the anchor $\rho\equiv 0$ and then  $\left( \mathcal{A},\pi,M,\rho,[.,.]_{\mathcal{A}} \right) $ is a \emph{Lie algebra Banach bundle}.

\subsection{Structure of partial Lie algebroid}
\label{___StructureofPartialLieAlgebroid}
We have the following generalization of the notion of convenient Lie algebroid:
\begin{definition}
\label{D_PartialConvenientLieAlgebroid}
Let $(\mathcal{A},\pi,M,\rho)$ be a convenient anchored bundle. Consider a sub-sheaf $\mathfrak{P}_M$ of the sheaf $\Gamma(\mathcal{A})_M$ of sections of $\mathcal{A}$.  Assume that  $\mathfrak{P}_M$ can be provided with a structure of Lie algebras sheaf  which satisfies, for any open set $U$ in $M$:
\begin{enumerate}
\item[(i)] 
for any $(\mathfrak{a}_1,\mathfrak{a}_2)\in \left( \mathfrak{P}(U) \right) ^2$ and any $f\in C^\infty(U)$, we have the Leibniz conditions
\begin{eqnarray}
\label{eq_rhoCompatibilitySheaf}
[\mathfrak{a}_1,f\mathfrak{a}_2]_{\mathfrak{P}(U)}=df(\rho(\mathfrak{a}_1))\mathfrak{a}_2+f[\mathfrak{a}_1,\mathfrak{a}_2]_{\mathfrak{P}(U)}
\end{eqnarray}
\item[(ii)]
the Lie bracket $[.,.]_{\mathfrak{P}(U)}$ on $\mathfrak{P}(U)$ only depends on the $1$-jets of sections of ${\mathfrak{P}(U)}$;
\item[(iii)]
$\rho$ induces  a Lie algebra morphism from  $\mathfrak{P}(U)$ to $\mathfrak{X}(U)$.
\end{enumerate}
Then $ \left( \mathcal{A},\pi, M,\rho,\mathfrak{P}_M \right)$ is called a convenient partial Lie algebroid\index{partial Lie algebroid}. The family $\{ [.,.]_{\mathfrak{P}(U)}, U \textrm{ open set in }M \}$ is called a sheaf bracket\index{sheaf bracket} and is denoted $[.,.]_\mathcal{A}$. \\
A partial convenient Lie algebroid  $(\mathcal{A},\pi, M,\rho,\mathfrak{P}_M)$  is called strong\index{partial Lie algebroid!strong} if for any $x\in M$, the stalk\index{stalk}
\[
\mathfrak{P}_x=\underrightarrow{\lim}\{ \mathfrak{P}(U),\;\; \varrho^U_V:V \to U,\;\; U,V \textrm{ open neighbourhoods of } x : U \supset V \}
\]
is equal to $\pi^{-1}(x)$ for any $x\in M$.
\end{definition}

Any convenient Lie algebroid is a partial Lie algebroid.\\
More generally, if $(\mathcal{A},\pi, M,\rho)$ is a convenient anchored bundle, any convenient subbundle $\mathcal{B}$ of $\mathcal{A}$ such that $ \left( \mathcal{B}, \pi_{\mathcal{B}}=\pi_{| \mathcal{B}},M, \rho_{\mathcal{B}}=\rho_{| \mathcal{B}}, [.,.]_{\mathcal{B}} \right) $ is a convenient Lie algebroid  provided with a structure of convenient partial Lie algebroid on $\mathcal{A}$ which is not strong in general.
Another type of example of convenient partial Lie algebroids will be described in the context of the prolongation of a convenient Lie algebroid in the next section. This convenient partial Lie algebroid  will be a strong partial Lie algebroid.

\begin{remark}
\label{R_PartialLieBracket} 
In local coordinates, the Lie bracket $[.,.]_{\mathfrak{P}(U)}$ can be written as in (\ref{eq_loctrivct}).
\end{remark}

\subsection{Derivative operators}
\label{___DerivativeOperators}
\subsubsection{Preliminaries}
\label{____Preliminaries}

If $U$ is a $c^\infty$-open subset of  a convenient space $\mathbb{E}$, the space $C^\infty(U,\mathbb{F})$ of smooth maps from $U$ to a convenient space $\mathbb{F}$ is a convenient space (cf. \cite{KrMi}, 3.7 and 3.11).\\
The space $L(\mathbb{E},\mathbb{F})$ of bounded linear maps from $\mathbb{E}$ to $\mathbb{F}$ endowed with the topology of uniform convergence on bounded subsets in $\mathbb{E}$ is a closed subspace of $C^\infty(\mathbb{E},\mathbb{F})$ and so is a convenient space.\\
More generally, the set $L^{k}_{\operatorname{alt}}(\mathbb{E},\mathbb{F})$  of all bounded $k$-linear alternating mappings  from $\mathbb{E}^k$ to $\mathbb{F}$ endowed with the topology of uniform convergence of bounded sets is a closed subspace of $C^\infty(\mathbb{E}^k,\mathbb{F})$ (cf. \cite{KrMi}, Corollary 5.13) and so  $L^{k}_{\operatorname{alt}}(\mathbb{E},\mathbb{F})$ is a convenient space.\\
On the other hand, if $\bigwedge^k(\mathbb{E})$ is the set of alternating $k$-tensors on $\mathbb{E}$ then
$L^{k}_{\operatorname{alt}}(\mathbb{E}):=L^{k}_{\operatorname{alt}}(\mathbb{E},\mathbb{R})$ is isomorphic as a locally convex  topological space to $L_{\operatorname{alt}}^k(\mathbb{E})$ (cf. \cite{KrMi}, Corollary 5.9) and so has a natural structure of convenient space.\\
Recall that bounded linear maps are smooth (cf. \cite{KrMi}, Corollary 5.5).\\

Let us consider a convenient vector bundle $\pi:\mathcal{A}\rightarrow M$ with typical fibre $\mathbb{A}$. We study the bundle
\[
\begin{array}
[c]{cccc}
\pi^k:  & L_{\operatorname{alt}}^k(\mathcal{A})=\displaystyle\bigcup_{x\in M}L_{\operatorname{alt}}^k(\mathcal{A}_x) & \to & M\\
        & (x,\omega)  	                                                                                             & \mapsto 	& x
\end{array}
\]
Using any atlas for the bundle structure of $\pi:\mathcal{A}\rightarrow M$, it is easy to prove that $\pi^k:L_{\operatorname{alt}}^k(\mathcal{A})\rightarrow M$ is a convenient vector bundle. The vector space of local sections of $L_{\operatorname{alt}}^k(\mathcal{A}_U)$ is denoted by $\bigwedge^{k}\Gamma^*(\mathcal{A}_U)$ and is called the set of $k$-exterior differential forms on $\mathcal{A}_U$.  We denote by $\bigwedge^k\Gamma^*(\mathcal{A})$ the sheaf of sections of  $\pi^k: L_{\operatorname{alt}}^k(\mathcal{A})\to M$  and $\bigwedge\Gamma^*(\mathcal{A})=\displaystyle\bigcup_{k=0}^\infty\bigwedge^k\Gamma^*(\mathcal{A})$ the sheaf of associated  graded exterior algebras.
\\

 {\bf In this section, we assume that $(\mathcal{A},\pi, M,\rho)$ is an anchored bundle and  that $ \left( \mathcal{A},\pi, M,\rho,\mathfrak{P}_M \right)$ is a fixed strong partial Lie algebroid}.\\ 

This situation is always satisfied if $(\mathcal{A},\pi, M,\rho, \left[.,.\right]_{\mathcal{A}})$ is a Lie algebroid and  occurs for the prolongation of a convenient Lie algebroid (cf $\S$  \ref{__ProlongationOfAConvenientLieAlgebroidAlongAFibration}). This context also occurs in the setting of partial Poisson manifolds (cf. \cite{PeCa}).

\subsubsection{ Insertion operator }
\label{____InteriorProduct}
Let $\mathfrak{a}$  be a local section of $\mathcal{A}$ defined on an open set $U$. As in \cite{KrMi}, 33.10, we have
\begin{proposition}
\label{D_InsertionOperator}
The insertion operator\index{insertion operator}\index{operator!insertion} $i_\mathfrak{a}$ is the  graded endomorphism of degree $-1$ defined by:
\begin{enumerate}
\item
\begin{enumerate}
\item[(i)]
For any function $f\in C^\infty(U)$
\begin{eqnarray}
\label{eq_i0} 
i_{\mathfrak{a}}\left(  f\right) =0
\end{eqnarray}
\item[(ii)]
For any $k$-form $\omega$ (where $k>0$),
\begin{eqnarray}
\label{eq_iq} 
\left( i_{\mathfrak{a}}\omega\right)  \left(  \mathfrak{a}_{1},\dots,\mathfrak{a}_{k}\right)(x)=\omega(
   \mathfrak{a}(x),\mathfrak{a}_1(x),\dots,\mathfrak{a}_k(x)).
\end{eqnarray}	
\end{enumerate}
\item
$\hfil{
i_\mathfrak{a}(\omega \wedge \omega^\prime)=i_\mathfrak{a}(\omega)\wedge \omega^\prime+(-1)^{{\rm deg}\omega}i_\mathfrak{a}(\omega^\prime).
}$
\end{enumerate}
\end{proposition}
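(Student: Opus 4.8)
The plan is to follow the scheme of \cite{KrMi}, 33.10, adapting each step to the convenient bundle $L_{\operatorname{alt}}^k(\mathcal{A})$ introduced in the preliminaries of $\S$ \ref{____Preliminaries}. First I would check that formulas (\ref{eq_i0}) and (\ref{eq_iq}) genuinely produce an element of $\bigwedge\Gamma^*(\mathcal{A})$, so that $i_{\mathfrak{a}}$ is a well-defined graded endomorphism of degree $-1$; then I would verify the graded Leibniz rule of part (2), which carries the essential content.

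For well-definedness, fix a $k$-form $\omega\in\bigwedge^k\Gamma^*(\mathcal{A}_U)$ with $k>0$, that is, a smooth section $x\mapsto\omega(x)$ of $\pi^k:L_{\operatorname{alt}}^k(\mathcal{A})\to M$. At each point the fibrewise insertion
\[
\iota_{\mathfrak{a}(x)}:L_{\operatorname{alt}}^k(\mathcal{A}_x)\to L_{\operatorname{alt}}^{k-1}(\mathcal{A}_x),\qquad (\iota_{\mathfrak{a}(x)}\beta)(a_1,\dots,a_{k-1})=\beta(\mathfrak{a}(x),a_1,\dots,a_{k-1}),
\]
is a bounded linear map, and $x\mapsto\iota_{\mathfrak{a}(x)}$ is smooth since $\mathfrak{a}$ is a smooth section and evaluation is bounded multilinear, hence smooth (cf. \cite{KrMi}, Corollary 5.5). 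Composing with the smooth section $\omega$ yields a smooth section $i_{\mathfrak{a}}\omega$ of $\pi^{k-1}$; it is alternating and $C^\infty(U)$-multilinear in its $k-1$ arguments because $\omega$ is, so $i_{\mathfrak{a}}\omega\in\bigwedge^{k-1}\Gamma^*(\mathcal{A}_U)$. Together with $i_{\mathfrak{a}}(f)=0$ on $\bigwedge^0\Gamma^*(\mathcal{A})=C^\infty$ and linear extension, this exhibits $i_{\mathfrak{a}}$ as a graded endomorphism of degree $-1$.

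It remains to establish the antiderivation identity of part (2). Writing $\omega\in\bigwedge^k\Gamma^*(\mathcal{A}_U)$ and $\omega^\prime\in\bigwedge^\ell\Gamma^*(\mathcal{A}_U)$, I would evaluate both sides on an arbitrary $(k+\ell-1)$-tuple of local sections, expand $\omega\wedge\omega^\prime$ through its shuffle-permutation formula, and then split the resulting sum according to whether the distinguished argument $\mathfrak{a}$ is fed into a slot belonging to $\omega$ or to $\omega^\prime$. The first family of terms reassembles into $i_{\mathfrak{a}}(\omega)\wedge\omega^\prime$, while the second, after moving $\mathfrak{a}$ past the $k$ arguments occupied by $\omega$, reassembles into $\omega\wedge i_{\mathfrak{a}}(\omega^\prime)$ with the transposition sign $(-1)^{\deg\omega}=(-1)^{k}$. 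Since both members are $C^\infty(U)$-multilinear and alternating, it suffices to test the equality on decomposable forms, or equivalently in a local trivialization provided by $\S$ \ref{___LocalIdentificationsAndExpressionsInAConvenientBundle}, where the computation is the classical one.

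The only issue specific to the convenient framework is to ensure that at every stage the objects remain smooth sections of the relevant convenient bundles; this is guaranteed by $\S$ \ref{____Preliminaries}, which identifies $L_{\operatorname{alt}}^k(\mathcal{A})$ as a convenient vector bundle and records that the bounded multilinear operations (evaluation, insertion, wedge) used above are smooth. The remaining difficulty is then purely the sign bookkeeping in the shuffle expansion, which is identical to the finite-dimensional case and presents no obstruction once the convenient smoothness has been secured.
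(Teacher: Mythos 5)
Your proof is correct and is essentially the argument the paper itself appeals to: the paper offers no proof of Proposition \ref{D_InsertionOperator} beyond the citation of \cite{KrMi}, 33.10, and your two steps --- fibrewise bounded linearity of $\beta\mapsto\iota_{\mathfrak{a}(x)}\beta$ together with smoothness of $x\mapsto\iota_{\mathfrak{a}(x)}$ for well-definedness, then the shuffle-permutation expansion of $\omega\wedge\omega^\prime$ for the antiderivation rule --- are exactly the standard reconstruction of that reference, transplanted to the convenient bundle $\pi^k:L_{\operatorname{alt}}^k(\mathcal{A})\to M$ of $\S$ \ref{____Preliminaries}. One caution: you should delete the remark that ``it suffices to test the equality on decomposable forms.'' In the convenient (infinite-dimensional) setting, the bounded alternating forms in $L_{\operatorname{alt}}^k(\mathbb{A})$ are \emph{not} spanned by decomposable ones, so that reduction is unavailable as stated; fortunately it is also unnecessary, since your shuffle computation is a pointwise identity valid for an arbitrary element of $L_{\operatorname{alt}}^k(\mathcal{A}_x)$, splitting the $(k,\ell)$-shuffles according to whether the slot carrying $\mathfrak{a}(x)$ lands in $\omega$ or in $\omega^\prime$, with the sign $(-1)^k$ arising from transposing $\mathfrak{a}(x)$ past the $k$ arguments of $\omega$. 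Note finally that part (2) as printed in the paper has a typo --- the second term should read $(-1)^{\deg\omega}\,\omega\wedge i_\mathfrak{a}(\omega^\prime)$ --- and it is this corrected identity that your computation establishes.
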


\subsubsection{Lie derivative}
\label{____Liederivative}

\begin{proposition}
\label{P_Liekform} 
For $k\geq 0$,  let $\omega$ be a  local $k$-form that is an element of  $\bigwedge^{k}\Gamma^*(\mathcal{A}_U)$ for some  open set $U$ of $M$. Given any section $\overline{\mathfrak{a}}\in \mathfrak{P}(U)$,  the \emph{Lie derivative}\index{Lie derivative} with respect to $\overline{\mathfrak{a}}$ on sections of  $\mathfrak{P}(U)$, denoted by  $L_{\overline{\mathfrak{a}}}^\rho$,  is  the  graded endomorphism with degree $0$ defined  in the following way:
\begin{enumerate}
\item
For any function $f\in C^\infty(U)$,
\begin{eqnarray}
\label{eq_L0}
L_{\overline{\mathfrak{a}}}^{\rho}(f) = i_{{\rho}\circ\overline{ \mathfrak{a}}}\left(  df\right)
\end{eqnarray}
where $L_{X}$ denote the usual Lie derivative with respect to the vector field  $X$ on $M$.
\item
For any $k$-form $\omega$ (where $k>0$),
\begin{eqnarray}
\label{eq_Lq} 
\begin{aligned}
	\left(  L_{\overline{\mathfrak{a}}}^{\rho}\omega\right)  \left(  \mathfrak{a}_{1},\dots,\mathfrak{a}_{k}\right)(x)=
    & L_{\overline{\mathfrak{a}}}^{\rho}\left(  \omega\left(  \mathfrak{a}_{1},\dots,\mathfrak{a}_{k}\right)  \right)(x)\\
	&-{\displaystyle\sum\limits_{i=1}^{k}} \omega\left(  \mathfrak{a}_{1},\dots,\mathfrak{a}_{i-1}
	,\left[  \overline{\mathfrak{a}},\overline{\mathfrak{a}}_{i}\right]  _{\mathcal{A}},\mathfrak{a}_{i+1},\dots,\mathfrak{a}_{k}\right)(x)
\end{aligned}
\end{eqnarray}	
where $\overline{\mathfrak{a}}_i$ is any section of $\mathfrak{P}(U)$ such that $\overline{\mathfrak{a}}_i(x)={\mathfrak{a}}_i(x)$ for $i \in \{1, \dots, k\} $.
\end{enumerate}
\end{proposition}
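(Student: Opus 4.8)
The plan is to establish three things: that the right-hand side of (\ref{eq_Lq}) does not depend on the choice of the sections $\overline{\mathfrak{a}}_i\in\mathfrak{P}(U)$ representing the values $\mathfrak{a}_i(x)$ (well-definedness), that $L_{\overline{\mathfrak{a}}}^{\rho}\omega$ is then a genuine smooth section of $L_{\operatorname{alt}}^k(\mathcal{A}_U)$, i.e.\ a $k$-form, and finally that $L_{\overline{\mathfrak{a}}}^{\rho}$ is a graded endomorphism of degree $0$ on $\bigwedge\Gamma^*(\mathcal{A}_U)$. Because $(\mathcal{A},\pi,M,\rho,\mathfrak{P}_M)$ is a \emph{strong} partial Lie algebroid, the stalk condition $\mathfrak{P}_x=\pi^{-1}(x)$ guarantees that for every $x$ and every value there is at least one $\overline{\mathfrak{a}}_i\in\mathfrak{P}(U)$ attaining it, so the brackets in (\ref{eq_Lq}) are meaningful and the formula can be evaluated on every tuple of fibre elements; the content of the statement is that the outcome is insensitive to the representatives. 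Since the brackets force all arguments into $\mathfrak{P}(U)$, I read (\ref{eq_Lq}) with the $\overline{\mathfrak{a}}_i$ used consistently, as the arguments of $\omega$ in the first term as well as inside the bracket. I would run the entire verification in a local trivialization ($\S$\ref{___LocalIdentificationsAndExpressionsInAConvenientBundle}), which bypasses the tensoriality lemma for modules of sections, delicate in the convenient category when $M$ is not smoothly regular (cf.\ Remark \ref{R_LocalGlobal}).

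The heart of the argument, and the step I expect to be the main obstacle, is the cancellation of the first-order dependence of (\ref{eq_Lq}) on the representatives. Write $\omega\equiv(\mathsf{x}\mapsto\omega_{\mathsf{x}})$, $\overline{\mathfrak{a}}\equiv(\mathsf{x},\overline{\mathsf{a}})$ and $\overline{\mathfrak{a}}_i\equiv(\mathsf{x},\overline{\mathsf{a}}_i)$. By (\ref{eq_L0}) the first term of (\ref{eq_Lq}) is the derivative of the function $\mathsf{x}\mapsto\omega_{\mathsf{x}}(\overline{\mathsf{a}}_1,\dots,\overline{\mathsf{a}}_k)$ along $\rho_{\mathsf{x}}(\overline{\mathsf{a}})$, which by the Leibniz rule expands at $x$ as $(d\omega)_{\mathsf{x}}(\rho_{\mathsf{x}}\overline{\mathsf{a}})(\overline{\mathsf{a}}_1,\dots,\overline{\mathsf{a}}_k)+\sum_i\omega_{\mathsf{x}}(\overline{\mathsf{a}}_1,\dots,d\overline{\mathsf{a}}_i(\rho_{\mathsf{x}}\overline{\mathsf{a}}),\dots,\overline{\mathsf{a}}_k)$. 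Inserting the local expression (\ref{eq_loctrivct}) of the bracket into the sum produces exactly the opposite terms $-\sum_i\omega_{\mathsf{x}}(\overline{\mathsf{a}}_1,\dots,d\overline{\mathsf{a}}_i(\rho_{\mathsf{x}}\overline{\mathsf{a}}),\dots,\overline{\mathsf{a}}_k)$, together with the zeroth-order contributions $\mathsf{C}_{\mathsf{x}}(\overline{\mathsf{a}},\overline{\mathsf{a}}_i)$ and $-d\overline{\mathsf{a}}(\rho_{\mathsf{x}}\overline{\mathsf{a}}_i)$. The two families of $d\overline{\mathsf{a}}_i(\rho_{\mathsf{x}}\overline{\mathsf{a}})$-terms cancel, so that at $x$ the right-hand side of (\ref{eq_Lq}) reduces to $(d\omega)_{\mathsf{x}}(\rho_{\mathsf{x}}\overline{\mathsf{a}})(\overline{\mathsf{a}}_1,\dots,\overline{\mathsf{a}}_k)-\sum_i\omega_{\mathsf{x}}(\overline{\mathsf{a}}_1,\dots,\mathsf{C}_{\mathsf{x}}(\overline{\mathsf{a}},\overline{\mathsf{a}}_i)-d\overline{\mathsf{a}}(\rho_{\mathsf{x}}\overline{\mathsf{a}}_i),\dots,\overline{\mathsf{a}}_k)$. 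No derivative of the $\overline{\mathsf{a}}_i$ survives: every remaining term is a bounded function of the values $\overline{\mathsf{a}}_i(x)=\mathsf{a}_i(x)$ alone. This proves simultaneously that (\ref{eq_Lq}) is independent of the chosen representatives and that it is $C^\infty(U)$-multilinear in the $\mathfrak{a}_i$. The reason a naive reading fails, and why the jet hypotheses matter, is that each of the two terms of (\ref{eq_Lq}) genuinely depends on the $1$-jets of the $\overline{\mathfrak{a}}_i$, through (\ref{eq_L0}) and through condition (ii) of Definition \ref{D_PartialConvenientLieAlgebroid} (i.e.\ \textbf{(AL 2)}) respectively; only their combination is jet-free.

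It then remains to collect the routine consequences. The surviving local expression is manifestly smooth in $\mathsf{x}$, since $\omega$, $d\omega$, $\mathsf{C}$, $\rho$ and $\overline{\mathfrak{a}}$ are smooth and the operations involved are bounded, so $L_{\overline{\mathfrak{a}}}^{\rho}\omega$ is a smooth section of $L_{\operatorname{alt}}^k(\mathcal{A}_U)$, an element of $\bigwedge^k\Gamma^*(\mathcal{A}_U)$; for $k=0$ it is exactly (\ref{eq_L0}). Skew-symmetry of $L_{\overline{\mathfrak{a}}}^{\rho}\omega$ follows from that of $\omega$ and the structure of the sum in (\ref{eq_Lq}), so the degree is preserved. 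Finally, the graded-derivation property $L_{\overline{\mathfrak{a}}}^{\rho}(\omega\wedge\omega')=(L_{\overline{\mathfrak{a}}}^{\rho}\omega)\wedge\omega'+\omega\wedge(L_{\overline{\mathfrak{a}}}^{\rho}\omega')$ is checked by evaluating both sides on a tuple of sections of $\mathfrak{P}(U)$ and combining the Leibniz rule (\ref{eq_L0}) for the function part with the derivation behaviour of the bracket sum, just as for the insertion operator in Proposition \ref{D_InsertionOperator}; this reproduces the degree-$0$ graded endomorphism of \cite{KrMi}, 33.10. A coordinate-free alternative would prove directly from \textbf{(AL 1)} and (\ref{eq_L0}) that the right-hand side of (\ref{eq_Lq}) is $C^\infty(U)$-multilinear and alternating on $\mathfrak{P}(U)^k$, the standard Cartan cancellation, and then invoke tensoriality; I would nevertheless keep the local computation as the backbone, since it is what makes the independence of the representatives transparent.
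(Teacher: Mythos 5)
Your proposal is correct and follows essentially the same route as the paper's proof: work in a local trivialization, expand the first term of (\ref{eq_Lq}) by the Leibniz rule, substitute the local expression (\ref{eq_loctrivct}) of the bracket so that the $d\overline{\mathsf{a}}_i(\rho_{\mathsf{x}}\overline{\mathsf{a}})$ terms cancel, and conclude that the value depends only on $\mathfrak{a}_i(x)$ together with the $1$-jets of $\overline{\mathfrak{a}}$ and $\omega$, smoothness then following from boundedness of $\mathsf{C}$, $\rho$ and the differential (the paper makes this last step precise via the uniform boundedness principle, \cite{KrMi}, Proposition 30.1). Your extra verifications of skew-symmetry and the wedge-derivation property are routine supplements that the paper's proof leaves implicit.
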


\begin{proof}
Since the problem is local, we may assume that $U$ is a $c^\infty$-open set in $\mathbb{M}$ over which $\mathcal{A}$ is trivial. Fix some section $\mathfrak{a}$ of $\mathcal{A}$ on $U$ and fix some $x\in U$. After shrinking $U$ if necessary, if  $f$ is a smooth function on $U$, it is clear that (\ref{eq_L0}) is well defined.\\
For $k>0$, let $\omega\in \bigwedge^{k}\Gamma^*(\mathcal{A}_U)$. Since we have a strong partial Lie algebroid, this implies that for any $k$-uple $(\mathfrak{a}
_1,\dots,\mathfrak{a}_k)$ of local sections on $U$ of $\mathcal{A}$, the value $\omega(\mathfrak{a}_1(x),\dots,\mathfrak{a}_k(x))$ is well defined. Let $\overline{\mathfrak{a}}_i\in \mathfrak{P}(U)$ such that $\mathfrak{a}_i(x)=\overline{\mathfrak{a}}_i(x)$ for $i \in \{1,\dots,k\}$, we apply the formula (\ref{eq_Lq}) to $(\overline{\mathfrak{a}},\overline{\mathfrak{a}}_1,\dots, \overline{\mathfrak{a}}_k)$. In our context, $\omega$ is a smooth field over $U$ with values in $L_{\operatorname{alt}}^k(\mathbb{A})$ and each $\overline{\mathfrak{a}}_i$ is a smooth map from $U$ to $\mathbb{A}$. In this way, we have

\begin{align*}
L^\rho_{\overline{\mathfrak{a}}}\omega \left( \overline{ \mathfrak{a}}_{1},\dots,\overline{ \mathfrak{a}}_{k}\right)(x)
=& d_x\omega \left( \rho(\overline{\mathfrak{a}}); \overline{  \mathfrak{a}}_{1}(x),\dots,\overline{ \mathfrak{a}}_{k}(x) \right)\\ &+\sum_{i=1}^k\omega  \left( \overline{  \mathfrak{a}}_{1}(x),\dots, d_x\overline{ \mathfrak{a}}_{i}(\rho(\overline{\mathfrak{a}})), \dots,\overline{ \mathfrak{a}}_{k}(x) \right)
\end{align*}

Since $[.,.]_{\mathfrak{P}(U)}$ only depends on the $1$-jets of sections, as for an almost Lie bracket (cf. Remark \ref{R_PartialLieBracket}), we have:
\[
\left[  \overline{\mathfrak{a}},\overline{\mathfrak{a}}_{i}\right]_{\mathcal{A}}(x)=d_x\overline{\mathfrak{a}}_{i}(\rho( \overline{\mathfrak{a}}))-d_x\overline{\mathfrak{a}}(\rho( \overline{\mathfrak{a}}_i)+ C_x( \overline{\mathfrak{a}}(x), \overline{\mathfrak{a}}_i(x)).
\]
It follows that we have
\begin{align*}
\left(  L_{\overline{\mathfrak{a}}}^{\rho}\omega\right)  \left(  \mathfrak{a}_{1},\dots,\mathfrak{a}_{k}\right)(x)=  d_x\omega\left(\rho(\overline{\mathfrak{a}}); \overline{  \mathfrak{a}}_{1}(x),\dots,\overline{ \mathfrak{a}}_{k}(x)\right))\\
+ {\displaystyle\sum\limits_{i=1}^{k}} \omega\left(  \overline{\mathfrak{a}}_{1}(x),\dots,\overline{\mathfrak{a}}_{i-1}(x)
	,d_x\overline{\mathfrak{a}}(\rho( \overline{\mathfrak{a}}_i)+ C_x( \overline{\mathfrak{a}}(x), \overline{\mathfrak{a}}_i(x)),\overline{\mathfrak{a}}_{i+1}(x),\dots,\overline{\mathfrak{a}}_{k}(x) \right) .
\end{align*}
which implies that $L_{\overline{\mathfrak{a}}}^{\rho}\omega$ is a well defined $k$-skew symmetric form on  $\mathcal{A}_x$ on $U$ since its value only depends on the $1$-jet of $\overline{\mathfrak{a}}$ and of $\omega$ and the values of $(\mathfrak{a}_1,\dots,\mathfrak{a}_k)$ at $x$. Now, as $x\mapsto C_x$ is a smooth map from $U$ to $L_{\operatorname{alt}}^2(\mathbb{A})$ and so is bounded, the differential of  functions is a bounded morphism of convenient space (cf. \cite{KrMi}, 3), and $\rho$ is a bounded morphism of convenient spaces, this completes the proof according to the uniform boundedness principle given in  \cite{KrMi}, Proposition 30.1.
\end{proof}

\begin{remark}
\label{R_Liederivativef}
From the relation (\ref{eq_L0}), it is clear that the Lie derivative of a function is defined for any section of $\mathcal{A}_U$.  Of course, this is also true for any  $k$-form on a Lie algebroid.  But, for a strong partial Lie algebroid, this is not true for any $k$-form with $k>0$,  since the last formula in the previous proof shows clearly that $L_{\overline{\mathfrak{a}}}^{\rho}\omega$ also depends on  the $1$-jet of ${\overline{\mathfrak{a}}}$.
\end{remark}

\begin{remark}
\label{R_AlmostLieDerivative}   
Assume that $(\mathcal{A},\pi, M,\rho)$ is provided with an almost Lie bracket $\left[.,.\right]_{\mathcal{A}}$. Then the Lie derivative $ L_{{\mathfrak{a}}}^{\rho}\omega$  is again well defined by an evident adaptation of formula (\ref{eq_Lq} ) for any local section $\mathfrak{a}$ and $k$-form $\omega$ defined on some open set $U$. 
 Moreover, if  the Lie bracket on $ \left( \mathcal{A},\pi, M,\rho,\mathfrak{P}_M \right)$ is induced by the almost Lie bracket $\left[.,.\right]_{\mathcal{A}}$,  then the Lie derivative defined in Proposition \ref{P_Liekform} and the previous global one are compatible. 
\end{remark}

\subsubsection{Exterior derivative}
\label{____ExteriorDerivative}
At first, for any function $f$, we can also define the $1$-form $d_{\rho}f,$ by
\begin{eqnarray}
\label{eq_d0} 
d_{\rho}f={{\rho}^t}\circ df
\end{eqnarray}
where ${\rho}^t:T^{\prime}M\rightarrow \mathcal{A}^{\prime}$  is the transposed mapping of $ \rho$.

\smallskip

The Lie derivative with respect to any local section  $\mathfrak{a}$ of $\mathcal{A}$  commutes with $d_{\rho}$.

\medskip

The \emph{exterior differential}\index{exterior differential} on $\bigwedge\Gamma^*(\mathcal{A})$ is defined as follows:

\begin{proposition}
\label{P_Exreriordidderential}${}$
\begin{enumerate}
\item[(1)]
The exterior differential  $d_\rho$ is  the  graded endomorphism of  degree $1$   on $\bigwedge\Gamma^*(\mathcal{A})$ defined in the following way:
\begin{enumerate}
\item
    For any function $f$, $d_{\rho}f$ is defined previously;
\item
    For  $k>0$ and  any $k$-form  $\omega$, the exterior differential $d_{\rho}\omega$ is the unique $(k+1)$-form such that, for all $\mathfrak{a}_{0},\dots,\mathfrak{a}_{k}\in \Gamma(\mathcal{A})$,
\begin{eqnarray}
\label{eq_dext}
\begin{aligned}
	\left(  d_{\rho}\omega\right)  \left(  \mathfrak{a}_{0},\dots, \mathfrak{a}_{q}\right)(x)  &
	={\displaystyle\sum\limits_{i=0}^{q}}\left(  -1\right)  ^{i}L_{\mathfrak{a}_{i}}^{\rho
	}\left(  \omega\left(   \mathfrak{a}_{0},\dots,\widehat{ \mathfrak{a}_{i}},\dots, \mathfrak{a}_{q}\right)(x)
	\right) \\
	& +{\displaystyle\sum\limits_{0\leq i<j\leq q}}\left(  -1\right)
	^{i+j}\left(  \omega\left(  \left[   \overline{\mathfrak{a}}_{i}, \overline{\mathfrak{a}}_{j}\right]  _{\mathcal{A}}, \mathfrak{a}_{0}
	,\dots,\widehat{ \mathfrak{a}_{i}},\dots,\widehat{ \mathfrak{a}_{j}},\dots, \mathfrak{a}_{q}\right)  \right)(x)
\end{aligned}
\end{eqnarray}
where $\overline{\mathfrak{a}}_i$ is any section of $\mathfrak{P}(U)$ such that $\overline{\mathfrak{a}}_i(x)={\mathfrak{a}}_i(x)$ for $i \in \{0,\dots k\}$.
\end{enumerate}
\item[(2)] 
For any  $k$-form $\eta$, $l$-form $\zeta$ where $\left( k,l \right) $ in $\mathbb{N}^2$, we then have the following property  
\begin{equation}
\label{Eq_WedgeProduct}
d_\rho(\eta\wedge\zeta)=d_\rho(\eta)\wedge \zeta+(-1)^k\eta\wedge d_\rho(\zeta).
\end{equation}
\begin{equation}
\label{Eq_dcircd}
 d_{\rho}\circ d_{\rho}={d_\rho}^2=0.
\end{equation}
\end{enumerate}
\end{proposition}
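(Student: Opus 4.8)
The plan is to treat the three assertions in turn: first that formula (\ref{eq_dext}) genuinely defines a $(k+1)$-form, then the graded Leibniz rule (\ref{Eq_WedgeProduct}), and finally $d_\rho^2=0$ (\ref{Eq_dcircd}). Since $d_\rho$ on functions is already fixed by (\ref{eq_d0}), everything reduces to the case $k>0$. The first and most delicate point is well-definedness: each individual term on the right-hand side of (\ref{eq_dext}) depends on more than the pointwise values $\mathfrak{a}_0(x),\dots,\mathfrak{a}_q(x)$. The Lie-derivative terms $L^\rho_{\mathfrak{a}_i}\big(\omega(\dots)\big)$ involve the $1$-jets of the remaining arguments, and the bracket terms $[\overline{\mathfrak{a}}_i,\overline{\mathfrak{a}}_j]_{\mathcal{A}}$ depend on the chosen extensions in $\mathfrak{P}(U)$, which exist through every point precisely because the algebroid is strong (cf. Definition \ref{D_PartialConvenientLieAlgebroid}). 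I would show that the sum is nonetheless tensorial, so that the apparent dependence on $1$-jets and on the extensions collapses.

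Concretely, the right-hand side of (\ref{eq_dext}) is manifestly $\mathbb{R}$-multilinear and, thanks to the signs $(-1)^i$ and $(-1)^{i+j}$, alternating. To prove $C^\infty(U)$-linearity I would substitute $f\mathfrak{a}_j$ for $\mathfrak{a}_j$ and collect the extra terms. In the first sum, the Leibniz rule for the Lie derivative of a function (which is $L^\rho_{\mathfrak{a}_i}(g)=\rho(\mathfrak{a}_i)\cdot g$ by (\ref{eq_L0}), hence a derivation and defined for any section by Remark \ref{R_Liederivativef}) produces correction terms of the form $\big(\rho(\mathfrak{a}_i)\cdot f\big)\,\omega(\dots)$ for $i\neq j$, while the case $i=j$ contributes only the factor $f$. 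In the second sum, the Leibniz condition (\ref{eq_rhoCompatibilitySheaf}) for the sheaf bracket produces matching correction terms $\big(\rho(\overline{\mathfrak{a}}_i)\cdot f\big)\,\overline{\mathfrak{a}}_j$. These cancel exactly as in the finite-dimensional Koszul computation, since the cancellation uses only the two Leibniz identities, which hold verbatim here. Once $C^\infty(U)$-linearity is established, the value at $x$ depends only on $\mathfrak{a}_0(x),\dots,\mathfrak{a}_q(x)$, so $d_\rho\omega$ is a well-defined alternating form; its smoothness and boundedness as a section of $L^{k+1}_{\operatorname{alt}}(\mathcal{A})$ then follow from the local-coordinate expression, exactly as in the proof of Proposition \ref{P_Liekform}, using that $\mathsf{x}\mapsto\mathsf{C}_\mathsf{x}$ and $\rho$ are bounded and invoking the uniform boundedness principle (\cite{KrMi}, 30.1).

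For the graded Leibniz rule (\ref{Eq_WedgeProduct}), since both sides are $(k+l+1)$-forms and well-definedness is already in hand, it suffices to evaluate both sides on an arbitrary tuple $\mathfrak{a}_0,\dots,\mathfrak{a}_{k+l}$ of sections and compare. Expanding the left-hand side by (\ref{eq_dext}) and the right-hand side by (\ref{eq_dext}) applied to $\eta$ and to $\zeta$ together with the definition of $\wedge$, the Lie-derivative and bracket contributions regroup into one another; the matching is purely combinatorial in the shuffle signs and is identical to the finite-dimensional computation, so no new analytic input is needed beyond the $\mathbb{R}$-multilinearity already used. Alternatively one may deduce (\ref{Eq_WedgeProduct}) from Cartan's relation $L^\rho_{\mathfrak{a}}=i_{\mathfrak{a}}d_\rho+d_\rho i_{\mathfrak{a}}$ combined with the graded Leibniz rules for $i_{\mathfrak{a}}$ (Proposition \ref{D_InsertionOperator}) and for $L^\rho_{\mathfrak{a}}$.

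The identity $d_\rho^2=0$ is the final step. Evaluating $(d_\rho^2\omega)(\mathfrak{a}_0,\dots,\mathfrak{a}_{q+1})$ by applying (\ref{eq_dext}) twice, the terms split into those quadratic in the Lie derivative and those involving brackets. The Lie-derivative commutator terms reorganize by means of the relation $[L^\rho_{\mathfrak{a}},L^\rho_{\mathfrak{b}}]=L^\rho_{[\mathfrak{a},\mathfrak{b}]_{\mathcal{A}}}$ on forms, and the purely bracket terms cancel by the Jacobi identity. Both ingredients are available precisely because $(\mathcal{A},\pi,M,\rho,\mathfrak{P}_M)$ is a partial Lie algebroid: condition (iii) of Definition \ref{D_PartialConvenientLieAlgebroid}, equivalently Corollary \ref{C_rhoLieAlgebraMorphism}, makes $\rho$ a bracket morphism, and the fact that $\mathfrak{P}_M$ carries a structure of Lie algebras sheaf supplies the Jacobi identity on the extensions $\overline{\mathfrak{a}}_i\in\mathfrak{P}(U)$. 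I expect the genuine obstacle to lie in the well-definedness of the first step rather than in this last computation: algebraically $d_\rho^2=0$ is the same cancellation as in finite dimension once the Jacobi identity and the morphism property are granted, whereas the verification that the $1$-jet- and extension-dependent right-hand sides collapse to pointwise-defined, bounded alternating forms is the content specific to the convenient, strong partial setting, and must be controlled throughout via the local trivializations (Remark \ref{R_PartialLieBracket}) and the uniform boundedness principle.
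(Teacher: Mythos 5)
There is a genuine gap at the central step of your well-definedness argument. You establish $C^\infty(U)$-multilinearity of the right-hand side of (\ref{eq_dext}) via the Koszul cancellation and then conclude that ``the value at $x$ depends only on $\mathfrak{a}_0(x),\dots,\mathfrak{a}_q(x)$.'' That inference is the classical tensoriality principle, and it is precisely what is \emph{not} available in this setting. Its standard proof requires (a) writing a section vanishing at $x$ as a finite sum $\sum f_i\mathfrak{b}_i$ with $f_i(x)=0$, which needs a finite local frame --- but here the typical fibre $\mathbb{A}$ is an arbitrary convenient space and $\Gamma(\mathcal{A}_U)$ is not finitely generated (the paper insists on exactly this point in Comments \ref{Com_PartialBracket}, where it is the reason the finite-dimensional construction of the prolongation bracket breaks down); and (b) bump functions to localize, which may fail since $M$ need not be smoothly regular (Remark \ref{R_LocalGlobal}). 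Moreover, tensoriality in the slots $\mathfrak{a}_0,\dots,\mathfrak{a}_q$ does not by itself address the other half of well-definedness, namely independence of the choice of extensions $\overline{\mathfrak{a}}_i\in\mathfrak{P}(U)$, since the extensions are not arguments of the putative tensor.

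The paper's proof circumvents this entirely by a direct local computation, and your argument should be restructured the same way: in a trivialization, expand each term $L^{\rho}_{\overline{\mathfrak{a}}_i}\bigl(\omega(\dots)\bigr)$ as $d_x\omega\bigl(\rho(\overline{\mathfrak{a}}_i);\dots\bigr)$ plus terms in $d_x\overline{\mathfrak{a}}_j\bigl(\rho(\overline{\mathfrak{a}}_i)\bigr)$, insert the local expression of the sheaf bracket $\bigl[\overline{\mathfrak{a}}_i,\overline{\mathfrak{a}}_j\bigr]_{\mathcal{A}}(x)=d_x\overline{\mathfrak{a}}_j(\rho(\overline{\mathfrak{a}}_i))-d_x\overline{\mathfrak{a}}_i(\rho(\overline{\mathfrak{a}}_j))+C_x\bigl(\overline{\mathfrak{a}}_i(x),\overline{\mathfrak{a}}_j(x)\bigr)$ (Remark \ref{R_PartialLieBracket}), and observe that \emph{all} terms containing differentials of the sections cancel between the two sums, leaving an expression built only from $d_x\omega$, $C_x$, $\rho$ and the point values $\overline{\mathfrak{a}}_i(x)$. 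This exhibits well-definedness (including extension-independence) outright, and boundedness then follows as at the end of the proof of Proposition \ref{P_Liekform} from the uniform boundedness principle --- your Koszul cancellation is morally the same computation, but it must be performed at the level of local $1$-jets, where it yields pointwise dependence directly, rather than routed through an inapplicable meta-principle. Two secondary points: your alternative derivation of (\ref{Eq_WedgeProduct}) via Cartan's relation $L^{\rho}_{\mathfrak{a}}=i_{\mathfrak{a}}d_\rho+d_\rho i_{\mathfrak{a}}$, and your use of $[L^{\rho}_{\mathfrak{a}},L^{\rho}_{\mathfrak{b}}]=L^{\rho}_{[\mathfrak{a},\mathfrak{b}]_{\mathcal{A}}}$ in the proof of (\ref{Eq_dcircd}), both invoke identities nowhere established in this partial convenient setting (and $L^{\rho}$ on $k$-forms, $k>0$, is only defined along sections of $\mathfrak{P}$, with $1$-jet dependence, cf. Remark \ref{R_Liederivativef}); the paper instead gets (\ref{Eq_WedgeProduct}) immediately from the local closed form, and (\ref{Eq_dcircd}) from the Jacobi identity on $\mathfrak{P}(U)$ together with the fact that $d_\rho\omega$ depends only on the $1$-jet of $\omega$, exactly as in finite dimension.
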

\bigskip
\begin{proof}${}$\\
(1) Using the same context as in the proof of Proposition \ref{P_Liekform}, on the one hand, in local coordinates, we have
\begin{align*}
L_{\overline{\mathfrak{a}}_{i}}^{\rho} \left( \omega \left( \mathfrak{a}_{0},\dots,\widehat{\mathfrak{a}_{i}},\dots, \mathfrak{a}_{q}\right)(x) \right) 
&= d_x\omega\left(\rho({\mathfrak{a}}_i); \overline{  \mathfrak{a}}_{1}(x),\dots,\widehat{\overline{ \mathfrak{a}}_{i}},\dots,\overline{ \mathfrak{a}}_{k}(x)\right)\\
&\;\;\;+\sum_{i=1}^k\omega  \left(\overline{  \mathfrak{a}}_{1}(x),\dots, d_x\overline{ \mathfrak{a}}_{j}(\rho({\mathfrak{a}}_i)), \dots,\overline{ \mathfrak{a}}_{k}(x)\right) .\\
\end{align*}
On the other hand, we have
\begin{eqnarray*}
\begin{aligned}
&\omega\left(  \left[   \overline{\mathfrak{a}}_{i}, \overline{\mathfrak{a}}_{j}\right]  _{\mathcal{A}}, \mathfrak{a}_{0}
	,\dots,\widehat{ \mathfrak{a}_{i}},\dots,\widehat{ \mathfrak{a}_{j}},\dots, \mathfrak{a}_{q}\right) (x)\\
&=\omega\left( d_x\overline{\mathfrak{a}}_{i}(\rho( \overline{\mathfrak{a}}_j))-d_x\overline{\mathfrak{a}}_j(\rho( \overline{\mathfrak{a}}_i)+ C_x( \overline{\mathfrak{a}}_i(x), \overline{\mathfrak{a}}_j(x)), \overline{\mathfrak{a}}_i(x)), \overline{\mathfrak{a}}_{j}, \mathfrak{a}_{0}
	,\dots,\widehat{ \mathfrak{a}_{i}},\dots,\widehat{ \mathfrak{a}_{j}},\dots, \mathfrak{a}_{k}\right)(x).
\end{aligned}
\end{eqnarray*}
Finally, as $\rho(\mathfrak{a}_i(x))=\rho(\overline{\mathfrak{a}}_i(x))$ for $i \in \{0,\dots, k\}$, we obtain:
\begin{eqnarray*}
\begin{aligned}
&\left(  d_{\rho}\omega\right)  \left(  \mathfrak{a}_{0},\dots, \mathfrak{a}_{q}\right)(x)={\displaystyle\sum\limits_{i=0}^{k}}\left(  -1\right)  ^{i}  d_x\omega\left(\rho(\overline{\mathfrak{a}}_i); \overline{  \mathfrak{a}}_{1}(x),\dots,\widehat{\overline{ \mathfrak{a}}_{i}},\dots,\overline{ \mathfrak{a}}_{k}(x) \right) \\
&+{\displaystyle\sum\limits_{0\leq i<j\leq k}}\left(  -1\right)^{i+j}\left(  \omega\left(  C_x\left( \overline{\mathfrak{a}}_i(x), \overline{\mathfrak{a}}_j(x)\right), \overline{\mathfrak{a}}_{0}(x),\dots,\widehat{\overline{ \mathfrak{a}}_{i}}(x),\dots,\widehat{ \overline{\mathfrak{a}}_{j}}(x),\dots, \overline{\mathfrak{a}}_{k}(x) \right)\right)
\end{aligned}
\end{eqnarray*}
Since this value only depends on the $1$-jet of $\omega$ at $x$ and the value of each  $\overline{\mathfrak{a}}_i(x)$ for $i \in \{0,\dots k\}$, it follows that $d_{\rho}\omega$  is a well defined $(k+1)$-form by the same arguments as at the end of the proof of Proposition \ref{P_Liekform} .\\

(2) According to the definition of the wedge product, the last formula in local coordinates for $ \left(  d_{\rho}\omega\right) \left(  \mathfrak{a}_{0},\dots, \mathfrak{a}_{q}\right)$ clearly implies
relation (\ref{Eq_WedgeProduct}).\\
Since the Lie bracket on $\mathfrak{P}(U)$ satisfies the Jacobi identity for any open set $U$, and since the differential $d_\rho\omega$ only depends on the $1$-jet of $\omega$, as in finite dimension, it follows that $d_\rho(d_\rho \omega)=0$.
\end{proof}

\subsubsection{Nijenhuis endomorphism}
\label{____NijenhuisEndomorphism}

In this subsection, we only consider the case of a convenient Lie algebroid  $\left( \mathcal{A},\pi,M,\rho,[.,.]_{\mathcal{A}} \right) $.

Let $A$ be an endomorphism of $\mathcal{A}$. The \emph{Lie derivative of $A$} with respect to a local section $\mathfrak{a}$ is defined by
\begin{eqnarray}
\label{eq_LieDerivativeEndomorphism}
L^\rho_\mathfrak{a}A(\mathfrak{b})=[\mathfrak{a},A \left( \mathfrak{b} \right) ]_\mathcal{A}-A \left( [\mathfrak{a},\mathfrak{b}]_\mathcal{A} \right)
\end{eqnarray}
for all local or global sections $\mathfrak{b}$  with the same domain as $\mathfrak{a}$.\\

The \emph{Nijenhuis tensor}\index{Nijenhuis tensor}\index{tensor!Nijenhuis} of $A$ is the tensor of type $(2,0)$ defined by:
\begin{eqnarray}
\label{eq_NijenhuisEndomorphism}
N_A(\mathfrak{a},\mathfrak{b})=[A\mathfrak{a},A\mathfrak{b}]_\mathcal{A}-A[A\mathfrak{a},\mathfrak{b}]_\mathcal{A}-A[\mathfrak{a},A\mathfrak{b}]_\mathcal{A}-[\mathfrak{a},\mathfrak{b}]_\mathcal{A}
\end{eqnarray}
for all local or global sections $\mathfrak{a}$ and $\mathfrak{b}$ with same domain.

\begin{remark}
\label{R_PartialAlgebroid} 
Consider a partial Lie algebroid  $ \left( \mathcal{A},\pi, M,\rho,\mathfrak{P}_M \right) $. If $A$ is an endomorphism of sheaves of $\mathfrak{P}_M$, the same formulae are well defined for any local section $\overline{\mathfrak{a}}$ of $\mathfrak{P}_M$. In this way, we can also define the Nijenhuis tensor $N_A$ as an endomorphism of sheaves of  $\mathfrak{P}_M$.
\end{remark}

\subsection{Lie morphisms and Lie algebroid morphisms}
\label{___LieAlgebroidMorphism}
Let $(\mathcal{A}_1, \pi_1, M_1,\rho_1, [.,.]_{\mathcal{A}_1})$ and  $(\mathcal{A}_2, \pi_2, M_2,\rho_2, [.,.]_{\mathcal{A}_2})$  be two convenient Lie algebroids.\\
We consider a bundle morphism $\Psi:\mathcal{A}_1 \to \mathcal{A}_2$ over $\psi:M_1 \to M_2$. \\

In the one hand, according to \cite{HiMa} in finite dimension, we can introduce:
\begin{definition}
\label{D_psiRelatedSections}
Consider  a section $\mathfrak{a}_1$ of $\mathcal{A}_1$ over an open set $U_1$ and a section $\mathfrak{a}_2$ of $\mathcal{A}_2$ over  an open set $U_2$ which contains $\psi(U_1)$. We say that the pair of sections $(\mathfrak{a}_1,\mathfrak{a}_2)$ are $\psi$-related\index{related pairs of sections} if we have
\begin{description}
\item{\bf (RS)}
$\Psi\circ \mathfrak{a}_1=\mathfrak{a}_2\circ \psi$.
\end{description}
\end{definition}

\begin{definition}
\label{D_LieMorphism}
$\Psi$ is called a Lie morphism\index{Lie morphism}\index{morphism!Lie}  over $\psi$ if it fulfills the following conditions:
\begin{description}
\item[\textbf{(LM 1)}]
$\rho_2 \circ \Psi= T\psi \circ \rho_1$;
\item[\textbf{(LM 2)}]
$\Psi \circ [\mathfrak{a}_1,\mathfrak{a}^\prime_1]_{\mathcal{A}_1} = [\mathfrak{a}_2,\mathfrak{a}'_2]_{\mathcal{A}_2}\circ \psi$
for all $\psi$-related pairs of sections  $(\mathfrak{a}_1,\mathfrak{a}_2)$ and $(\mathfrak{a}^\prime_1,\mathfrak{a}^\prime_2)$.
\end{description}
\end{definition}

\begin{remark} 
\label{R_LieMorphismPartialAlgebroid} 
For $i \in \{1,2\}$,  let $ \left( \mathcal{A}_i, \pi_i, M_i,\rho_i, \mathfrak{P}_{M_i} \right) $ be a partial Lie algebroid.\\
We consider a sheaf  morphism $\Psi:\mathfrak{P}_{M_1} \to \mathfrak{P}_{M_2}$ over a smooth map $\psi:M_1 \to M_2$. Then the Definition \ref{D_psiRelatedSections}  makes sense for pairs of sections $(\mathfrak{a}_1,\mathfrak{a}_2)\in \mathfrak{P}_{M_1}\times\mathfrak{P}_{M_2}$ which are then called $\psi$-related. If $[.,.]_{\mathcal{A}_i}$ is the sheaf of Lie bracket defined on $\mathfrak{P}_{M_i}$ the assumption  \emph{\textbf{(LM 2)}} in Definition \ref{D_LieMorphism} also makes sense for two pairs of such $\psi$-related sections.\\
Thus $\Psi$ will be called a {\bf Lie morphism of partial convenient Lie algebroids} if it satisfies the assumptions \emph{\textbf{(LM 1)}}, and  \emph{\textbf{(LM 2)}}  for two pairs of $\psi$-related sections of $\mathfrak{P}_{M_1}\times\mathfrak{P}_{M_2}$.\\
\end{remark}

For any local  $k$-form $\omega$ on $\mathcal{A}_2$ defined on $U_2$, we denote by $\Psi^*\omega$ the local $k$-form on $\mathcal{A}_1$ defined on $U_1=\psi^{-1}(U_2)$ by:
\begin{eqnarray}
\label{eq_PullbackOmega}
(\Psi^*\omega)_{x_1}(\mathfrak{a}_1\dots\mathfrak{a}_k)=\omega_{\psi(x_1)}\left( \Psi(\mathfrak{a}_1),\cdots,\Psi(\mathfrak{a}_k) \right)
\end{eqnarray}
for all $x_1\in U_1$.\\

On the other hand, as classically in finite dimension, we can introduce:

\begin{definition}
\label{D_ClassicLieAlgebroidMorphism}
 $\Psi$ is a Lie algebroid morphism over $\psi$ if and only if we have
\begin{description}
\item[\textbf{(LAM 1)}]
$\Psi^*(d_{\rho _2}f) = d_{\rho _1} \left( f \circ \psi \right) $ for all $f \in C^\infty (U_2)$;
\item[\textbf{(LAM 2)}]
$\Psi^*(d_{\rho _2} \omega)=d_{\rho _1} \Psi^*(\omega)$  for any $1$-form $\omega$ on $ \{ \mathcal{A}_2 \} _{U_2}$.
\end{description}
\end{definition}

It is easy to see that condition \textbf{(LM 1)} and \textbf{(LAM 1)} are equivalent (cf. Proof of Proposition \ref{P_psiDiffeomorphism}).  Property \textbf{(LM 2)} implies property \textbf{(LAM 2)} for $\psi $-related sections but, in general, a pair of local sections $(\mathfrak{a}_1,\mathfrak{a}_2)$ of $\mathcal{A}_1$ and   $ \mathcal{A}_2$ are not $\psi$-related  while each member (\ref{eq_PullbackOmega}), for  any  two such pairs, is well defined. On the other hand, under the  assumption of  \textbf{(LM 2)},  we have 
\begin{eqnarray}
\label{eq_Psia1a2}
[\Psi(\mathfrak{a}_1),\Psi(\mathfrak{a}_2)]_{\mathcal{A}_2}\left(\psi(x_1)\right)=([\mathfrak{a}'_1,\mathfrak{a}'_2]_{\mathcal{A}_2})\left(\psi(x_1) \right) .
\end{eqnarray}
For any $x_1\in U_1$. Therefore  the relation \textbf{(LAM 1)} is satisfied  for any such pair  $ (\mathfrak{a}_1,\mathfrak{a}_2)$   of sections of $\mathcal{A}_1$ which are  $\psi$-related to a pair $ (\mathfrak{a}_1^\prime,\mathfrak{a}_2^\prime)$   of sections of $\mathcal{A}_2$. Of course, this property is no longer  true for any pair $(\mathfrak{a}_1,\mathfrak{a}_2)$ of local sections  of  $\mathcal{A}_2$ and so the bracket  "$[\Psi (\mathfrak{a}_1),\Psi (\mathfrak{a}^\prime_1)]_{\mathcal{A}_2}(\psi(x_1))$" is not defined. 
Thus, in general, both definitions are not comparable. However, if  $\psi$ is a local  diffeomorphism, we have:

\begin{proposition}
\label{P_psiDiffeomorphism}
Let $ \left( \mathcal{A}_1, \pi_1, M_1,\rho_1, [.,.]_{\mathcal{A}_1} \right) $ and  $ \left( \mathcal{A}_2, \pi_2, M_2,\rho_2, [.,.]_{\mathcal{A}_2} \right) $  be two convenient Lie algebroids.  We consider a bundle morphism $\Psi:\mathcal{A}_1\to \mathcal{A}_2$ over a local diffeomorphisms $\psi:M_1\to M_2$.
Then $\Psi$ is a Lie algebroid morphism if and only if it is a Lie morphism.
\end{proposition}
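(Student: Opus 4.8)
The plan is to treat the anchor conditions and the bracket conditions separately, and to exploit the local diffeomorphism $\psi$ exactly at the point where the two notions fail to be comparable in general. First I would show that \textbf{(LM 1)} and \textbf{(LAM 1)} are equivalent for any smooth $\psi$, without using that it is a local diffeomorphism. Unwinding the definition $d_{\rho}f={\rho}^t\circ df$ from (\ref{eq_d0}) and the definition (\ref{eq_PullbackOmega}) of the pullback, for any section $\mathfrak{a}_1$ of $\mathcal{A}_1$ and any $f\in C^\infty(U_2)$ one gets $(\Psi^*(d_{\rho_2}f))(\mathfrak{a}_1)=df(\rho_2(\Psi(\mathfrak{a}_1)))$, while $d_{\rho_1}(f\circ\psi)(\mathfrak{a}_1)=df(T\psi(\rho_1(\mathfrak{a}_1)))$ by the chain rule. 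Since the differentials $df_{\psi(x_1)}$ separate the tangent vectors of $M_2$, requiring these to agree for all $f$ is exactly the identity $\rho_2\circ\Psi=T\psi\circ\rho_1$, i.e. \textbf{(LM 1)}.

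The decisive use of the hypothesis is the following observation: because $\psi$ is a local diffeomorphism, every local section $\mathfrak{a}_1$ of $\mathcal{A}_1$ (on a small enough $U_1$) admits a $\psi$-related partner, namely $\mathfrak{a}_2:=\Psi\circ\mathfrak{a}_1\circ\psi^{-1}$, which is a genuine smooth section of $\mathcal{A}_2$ since $\pi_2\circ\Psi=\psi\circ\pi_1$, and which satisfies \textbf{(RS)} by construction. This is precisely what is unavailable for a general $\psi$ and is the reason, recalled just before the statement, that the two definitions are otherwise incomparable.

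For the forward implication (Lie morphism $\Rightarrow$ Lie algebroid morphism), having \textbf{(LAM 1)} already, I would check \textbf{(LAM 2)} by evaluating both $\Psi^*(d_{\rho_2}\omega)$ and $d_{\rho_1}(\Psi^*\omega)$ on an arbitrary pair of sections $(\mathfrak{a}_1,\mathfrak{a}_1')$ of $\mathcal{A}_1$ and expanding via the $1$-form case of (\ref{eq_dext}), namely $(d_\rho\omega)(\mathfrak{b},\mathfrak{b}')=L^\rho_{\mathfrak{b}}(\omega(\mathfrak{b}'))-L^\rho_{\mathfrak{b}'}(\omega(\mathfrak{b}))-\omega([\mathfrak{b},\mathfrak{b}']_{\mathcal{A}})$. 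Introducing the $\psi$-related partners $\mathfrak{a}_2,\mathfrak{a}_2'$, the relation $(\Psi^*\omega)(\mathfrak{a}_1')=(\omega(\mathfrak{a}_2'))\circ\psi$ together with \textbf{(LM 1)} and the chain rule (as in (\ref{eq_L0})) makes the two Lie-derivative-of-function terms on the $\mathcal{A}_1$ side equal to the corresponding terms of $(d_{\rho_2}\omega)(\mathfrak{a}_2,\mathfrak{a}_2')\circ\psi$, while \textbf{(LM 2)} identifies the remaining bracket term $\omega(\Psi([\mathfrak{a}_1,\mathfrak{a}_1']_{\mathcal{A}_1}))$ with $\omega([\mathfrak{a}_2,\mathfrak{a}_2']_{\mathcal{A}_2})\circ\psi$. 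Summing gives $d_{\rho_1}(\Psi^*\omega)=\Psi^*(d_{\rho_2}\omega)$, which is \textbf{(LAM 2)}.

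For the converse (Lie algebroid morphism $\Rightarrow$ Lie morphism) I would run this computation backwards. Having obtained \textbf{(LM 1)} from \textbf{(LAM 1)}, the two function terms cancel automatically in the expansion above, so that \textbf{(LAM 2)}, evaluated on the $\psi$-related data, collapses to $\omega_{\psi(x_1)}(\Psi([\mathfrak{a}_1,\mathfrak{a}_1']_{\mathcal{A}_1}(x_1)))=\omega_{\psi(x_1)}([\mathfrak{a}_2,\mathfrak{a}_2']_{\mathcal{A}_2}(\psi(x_1)))$ for every local $1$-form $\omega$ on $\mathcal{A}_2$. Since every bounded linear functional on the fibre $(\mathcal{A}_2)_{\psi(x_1)}$ arises as the value of such a form, and since bounded linear functionals separate the points of a convenient space, the two fibre elements coincide; this is exactly \textbf{(LM 2)}. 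The hard part will be twofold: making rigorous that $\psi$ being a local diffeomorphism genuinely reduces every section to a $\psi$-related one (the step that fails in general), and justifying the separation-of-points argument in the convenient, rather than Banach or finite-dimensional, setting; both should follow from the construction of $\mathfrak{a}_2$ above and the properties of convenient spaces recalled in $\S$ \ref{____Preliminaries}.
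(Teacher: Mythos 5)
Your proposal is correct and takes essentially the same approach as the paper: the equivalence of \textbf{(LM 1)} and \textbf{(LAM 1)} via the separating family of differentials, the decisive construction $\mathfrak{a}_2=\Psi\circ\mathfrak{a}_1\circ\psi^{-1}$ of a $\psi$-related partner (which is exactly how the paper exploits the local diffeomorphism hypothesis), the expansion of $d_\rho$ on $1$-forms to reduce \textbf{(LAM 2)} to the bracket identity, and the separation of fibre points by local $1$-forms for the converse. The only difference is cosmetic: the paper carries out the computation in an explicit local trivialization after reducing to a chart, whereas you phrase the same calculation invariantly and make the convenient-space separation argument (Hahn--Banach for bounded functionals) explicit where the paper merely asserts that germs of $1$-forms are separating.
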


For instance, given any convenient Lie algebroid $ \left( \mathcal{A}, \pi, M,\rho, [.,.]_{\mathcal{A}} \right) $, then $\rho$ is Lie morphism and a Lie algebroid morphism from $ \left( \mathcal{A}, \pi, M,\rho, [.,.]_{\mathcal{A}} \right) $ to the convenient Lie algebroid $(TM, p_M, M, Id, [.,.])$.\\

\begin{proof}
Since the set  of differentials $\{df, \; f \textrm{ smooth map around } x_2\in M_2\}$
is a separating family on $T_{x_2}M_2$, by an elementary calculation, we obtain  the equivalence $\textbf{(LM 1)}\;\Leftrightarrow\; \textbf{(LAM 1)}$.\\
At first note that  \textbf{(LM 2)} and \textbf{(LAM 2)} are  properties of germs. Thus the equivalence is in fact a local problem.
Fix some $x^0_1\in M_1$ and $x^0_2=\psi(x^0_1)$. Since $\psi$ is a local diffeomorphism, the model of $M_1$ and of  $M_2$ are the same convenient space $\mathbb{M}$ and we have charts $(U_1,\phi_1)$ and $(U_2,\phi_2)$ around $x^0_1$ in $M_1$ and $x^0_2$ in $M_2$ such that  for $i \in \{1,2\}$:
\begin{description}
\item[--] 
$\phi_i(x_i)=0\in \mathbb{M}$; 
\item[--]
$\phi_2\circ \psi\circ \phi_1^{-1}$ is a diffeomorphism  between the $c^\infty$-open sets $\mathsf{U}_1:=\phi_1(U_1)$ and $\mathsf{U}_2:=\phi_2(U_2)$;
\item[--]
 a trivialization $\tau_i \{\mathcal{A}_i\} _{U_1}=U_i \times\mathbb{A}_i$. 
\end{description}

 Thus, without loss of generality, we may assume that $M_i$ is a $c^\infty$-open neighbourhood of $0\in \mathbb{M}$, $\psi$ is a diffeomorphism from $M_1$ to $M_2$ and  $\mathcal{A}_i=M_i\times \mathbb{A}_i$. By the way, the anchor $\rho_i$ is a smooth map from $M_i$ to $\operatorname{L}(\mathbb{M},\mathbb{A}_i)$ and each section $\mathfrak{a}_i$ of $\mathcal{A}_i$ is a smooth map from $M_i$ to $\mathbb{A}_i$. Under this context,  on the one hand, for all $x_1\in M_1$,  we have
\begin{eqnarray*}
\begin{aligned}
\Psi^*d_{\rho_2}\omega(\mathfrak{a},\mathfrak{a}')(x_1) & = {d}_{\psi(x_1)} \left( \omega(\Psi(\mathfrak{a}') \right) \left( \rho_2\circ \Psi(\mathfrak{a}) \right)) -{d}_{\psi(x_1)} \left( \omega(\Psi(\mathfrak{a}) \right) \left(\rho_2\circ \Psi(\mathfrak{a}') \right) \\
    &-\omega \left( [\Psi(\mathfrak{a}),\Psi(\mathfrak{a}')]_{\mathcal{A}_2} \right) \left( \psi(x_1) \right).\\
\end{aligned}
\end{eqnarray*}
On the other hand, we have
\begin{eqnarray*}
\begin{aligned}
d_{\rho_1}\Psi^*\omega(\mathfrak{a},\mathfrak{a}')(x_1)&
={d}_{\psi(x_1)}\omega \left( (\Psi(\mathfrak{a}') \right) \left( T\psi\circ\rho(\mathfrak{a}) \right)\\
&-{d}_{\psi(x_1)}\omega \left( (\Psi(\mathfrak{a}) \right) \left(T\psi\circ\rho(\mathfrak{a}') \right)
-\omega \left( \Psi([\mathfrak{a},\mathfrak{a}']_{\mathcal{A}_1} \right) (\psi(x_1)).\\
\end{aligned}
\end{eqnarray*}

Note that for any two pairs of $\psi$-related sections $(\mathfrak{a}_1,\mathfrak{a}_1')$ and $(\mathfrak{a}_2,\mathfrak{a}_2')$, as $\psi$ is a diffeomorphism,  \textbf{(LM 2)} is equivalent to
\begin{eqnarray}\label{eq_Psia1a2}
[\Psi(\mathfrak{a}_1),\Psi(\mathfrak{a}_2)]_{\mathcal{A}'}\left(\psi(x_1)\right)=\Psi([\mathfrak{a}_1,\mathfrak{a}_2]_{\mathcal{A}})\left(\psi(x_1)\right)
\end{eqnarray}
for all $x_1\in M_1$. 

Thus, if  \textbf{(LM 1)}  and \textbf{(LM 2)} are true, then, in the previous local context, \textbf{(LAM 2)} is equivalent to
\begin{eqnarray}
\label{eq_OmegaBracket}
\omega \left( \Psi([\mathfrak{a}_1,\mathfrak{a}_2]_{\mathcal{A}_1} \right)(\psi(x_1)) =\omega\left( [\Psi(\mathfrak{a}_1),\Psi(\mathfrak{a}_2)]_{\mathcal{A}'} \right) (\psi(x_1))
\end{eqnarray}
for any $1$-form $\omega$ on $\mathcal{A}_2$  and any $x_1\in M_1$. As $\psi$ is a diffeomorphism for any pair of sections $(\mathfrak{a}_1, \mathfrak{a}_2)$ of $\mathcal{A}_1$ if we set $  \mathfrak{a}
_1^\prime=\Psi(\mathfrak{a}_1)\circ\psi^{-1}$ and $  \mathfrak{a}_2^\prime=\Psi(\mathfrak{a}_2)\circ\psi^{-1}$, then $(\mathfrak{a}_1, \mathfrak{a}^\prime_1)$ and  $(\mathfrak{a}_2, \mathfrak{a}^\prime_2)$  are $\psi$-related and  it follows that \textbf{(LM 1)} and \textbf{(LM 2)} implies \textbf{(LAM 1)} and \textbf{(LAM 2)}.

Conversely, assume that  \textbf{(LAM 1)} and \textbf{(LAM 2)} are true.  Consider any
 two pairs of $\psi$-related sections $(\mathfrak{a}_1,\mathfrak{a}_1')$  and $(\mathfrak{a}_2,\mathfrak{a}_2')$. In this case  the relation (\ref{eq_OmegaBracket}) 
 evaluated  on $(\mathfrak{a}_1,\mathfrak{a}_2)$ is equivalent to \textbf{(LAM 2)} for any $1$-form $\omega $ on $U_2$. Since  around each point  in $M_2$, the set of germs of $1$-forms on $ \mathcal{A}_2$ is separating for germs of sections of $\mathcal{A}_2$ and as $\psi$ is a diffeomorphism, this implies (\ref{eq_OmegaBracket}).\\
It follows that the relation \textbf{(LM 2}) evaluated on both pairs   $(\mathfrak{a}_1,\mathfrak{a}_1')$  and $(\mathfrak{a}_2,\mathfrak{a}_2')$ is satisfied,  which ends the proof.
\end{proof}

\subsection{Foliations and  Banach-Lie algebroids}
\label{__FoliationsAndBanachLieAlgebroids}	
We first recall the classical notion of integrability of a distribution on a Banach manifold (cf. \cite{Pe1}).
		
Let $M$ be a Banach manifold.
\begin{enumerate}
\item
A distribution\index{distribution} $\Delta$ on $M$ is an assignment $\Delta: x\mapsto\Delta_{x}\subset T_{x}M$ on $M$ where $\Delta_{x}$ is a subspace of $T_{x}M$.  The distribution $\Delta$ is called closed if $\Delta_x$ is closed in $T_xM$  for all $x\in M$.		
\item
A vector field $X$ on $M$, defined on an open set Dom$(X)$, is called
tangent to a distribution $\Delta$ if $X(x)$ belongs to $\Delta_{x}$ for all
$x\in$Dom$(X)$. 			
\item
Let $X$ be a vector field tangent to a distribution $\Delta$ and $\operatorname{Fl}^X_t$ its flow. We say that $\Delta$ is $X$-invariant  if
$T_x\operatorname{Fl}^X_t(\Delta_x)=\Delta_{\operatorname{Fl}^X_t(x)}$
for all $t$ for which $\operatorname{Fl}^X_t(x)$ is defined.  			
\item
A distribution $\Delta$ on $M$ is called integrable if, for all
$x_{0}\in M$, there exists a weak submanifold $(N,\phi)$ of $M$ such that
$\phi(y_{0})=x_{0}$ for some $y_{0}\in N$ and $T\phi(T_{y}N)=\Delta_{\phi(y)}$
for all $y\in N$. In this case $(N,\phi)$ is called an integral manifold of
$\Delta$ through $x$. A leaf $L$ is a weak submanifold which is a maximal integral manifold.		
\item
A distribution $\Delta$ is called involutive if for any vector fields $X$
and $Y$ on $M$ tangent to $\Delta$ the Lie bracket $[X,Y]$ defined on
Dom$(X)\cap$Dom$(Y)$ is tangent to $\Delta$.\\	
\end{enumerate}

Classically, in the Banach context, when $\Delta$ is a supplemented subbundle of
$TM$, according to the Frobenius Theorem, involutivity implies integrability.\\	
In finite dimension, the famous results of H. Sussman and P. Stefan give
necessary and sufficient conditions for the integrability of smooth distributions.\\
Few generalizations of these results in the framework of Banach manifolds can be found in \cite{Ste}.

In  the context of this section, we have (cf. \cite{Pe1}):		
\begin{theorem}
\label{T_IntegrabilityDistributionRangeAnchor}
Let $(\mathcal{A},\pi,M,\rho,[.,.]_{\mathcal{A}})$ be a split Banach-Lie algebroid.\\
If $\rho(\mathcal{A})$ is a closed distribution, then this distribution is integrable.	
\end{theorem}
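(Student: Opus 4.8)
The plan is to establish integrability by verifying the hypotheses of a Stefan--Sussmann type theorem adapted to the split Banach setting, using the Lie algebroid structure to produce an involutive, invariant family of vector fields. The starting point is Corollary \ref{C_rhoLieAlgebraMorphism}: since $(\mathcal{A},\pi,M,\rho,[.,.]_{\mathcal{A}})$ is a Lie algebroid, the anchor $\rho$ induces a Lie algebra morphism $\Gamma(\mathcal{A}_U)\to\mathfrak{X}(U)$, so $\rho(\Gamma(\mathcal{A}))$ is a family of vector fields closed under the bracket $[.,.]$. Setting $\Delta_x=\rho(\pi^{-1}(x))\subseteq T_xM$, this family generates $\Delta$ and is automatically involutive by \eqref{eq_rhoCompatible}.

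First I would exploit the splitting hypothesis. Since the Lie algebroid is split, for each $x\in M$ the kernel $\ker\rho_x$ is supplemented in $\pi^{-1}(x)$; combined with the assumption that $\rho(\mathcal{A})$ is a closed distribution, this means that locally $\rho$ factors through a complement of $\ker\rho$ mapping isomorphically (as a bounded operator with bounded inverse, via the open mapping theorem in the Banach category) onto the closed subspace $\Delta_x$. The key consequence is that locally we can choose smooth sections $\mathfrak{a}_1,\dots$ of $\mathcal{A}$ whose images $\rho(\mathfrak{a}_i)$ span $\Delta$ smoothly, so $\Delta$ is genuinely generated by a set of smooth vector fields tangent to it.

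Next I would verify invariance. For any section $\mathfrak{a}\in\Gamma(\mathcal{A}_U)$, the vector field $X=\rho(\mathfrak{a})$ has a local flow $\operatorname{Fl}^X_t$, and I would show $\Delta$ is $X$-invariant, i.e. $T_x\operatorname{Fl}^X_t(\Delta_x)=\Delta_{\operatorname{Fl}^X_t(x)}$. This is where the Lie algebroid bracket does the essential work: the infinitesimal version of invariance is precisely that $[X,\rho(\mathfrak{b})]=\rho([\mathfrak{a},\mathfrak{b}]_{\mathcal{A}})$ stays tangent to $\Delta$ for all generating sections $\mathfrak{b}$, which holds by \eqref{eq_rhoCompatible}. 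Integrating the infinitesimal statement along the flow gives the finite invariance, using that the pushforward $T\operatorname{Fl}^X_t$ sends tangent generators to tangent generators and that $\Delta$ is closed so that the span is preserved.

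The hard part will be promoting the local, infinitesimal data into a genuine integral weak submanifold in the Banach category, where no compactness is available and Stefan--Sussmann arguments must be recast carefully. Concretely, I would invoke the Banach version of the Stefan--Sussmann theorem from \cite{Ste}: once $\Delta$ is closed, generated by smooth tangent vector fields, and invariant under their flows, the accessible sets through each $x_0$ acquire a weak submanifold structure $(N,\phi)$ with $T\phi(T_yN)=\Delta_{\phi(y)}$, giving the leaves. The main obstacle is ensuring that the splitting allows the construction of adapted charts in which $\Delta$ looks like a constant complemented subspace (a Frobenius-type normal form along a single flow direction), and that the transition between such charts is controlled by the bounded invertibility coming from the closedness of the range; this is exactly the point where the split and closed-range hypotheses are indispensable and where the infinite-dimensional argument departs from the finite-dimensional one.
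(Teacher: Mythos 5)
A point of calibration first: the paper does not actually prove Theorem \ref{T_IntegrabilityDistributionRangeAnchor} --- it is recalled from \cite{Pe1}, and the only argument in the paper belonging to this circle of ideas is the proof of Theorem \ref{T_tildefol}, which verifies the hypotheses of the integrability criterion of \cite{BCP} (Theorem 8.39 there: a closed weak distribution admitting a \emph{generating upper set} of sections satisfying the bracket-stability condition \textbf{(LB)} is integrable). Measured against that source, your skeleton is the right one and is essentially the proof of \cite{Pe1}: the family $\rho(\Gamma(\mathcal{A}_U))$ is the generating family, your appeal to Corollary \ref{C_rhoLieAlgebraMorphism} and to \eqref{eq_rhoCompatible} is exactly condition \textbf{(LB)}, and the split kernel together with the closed range gives, via the open mapping theorem, a bounded isomorphism from a complement $\mathbb{S}$ of $\ker\rho_{x_0}$ onto the closed subspace $\Delta_{x_0}$, which is the local (upper) trivialization that makes the criterion applicable.

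There are, however, two places where your write-up outsources the real difficulty to a statement that does not exist in the form you invoke. First, ``the Banach version of the Stefan--Sussmann theorem from \cite{Ste}'' is not available: \cite{Ste} contains only partial generalizations to Banach manifolds, and for weak (non-subbundle) closed distributions the implication ``closed, smoothly generated, flow-invariant $\Rightarrow$ integrable'' is not an off-the-shelf theorem; the criterion actually needed is precisely the one of \cite{Pe1} (equivalently \cite{BCP}, Theorem 8.39), which requires the local factorization through $\mathbb{S}$ as part of its hypotheses, not merely invariance. Second, and relatedly, your sentence ``integrating the infinitesimal statement along the flow gives the finite invariance'' conceals the genuinely hard analytic step: in infinite dimensions one must transport generators by solving a differential equation in the Banach space $\mathbb{S}$, with Gronwall-type control supplied by the bounded invertibility of $\rho_{x_0}$ restricted to $\mathbb{S}$; this is the content of the cited criterion rather than a preliminary to it. A smaller caution: splitness gives a complement of $\ker\rho_x$ fibrewise but no smooth field $x\mapsto\mathbb{S}_x$ of complements, so your phrase ``span $\Delta$ smoothly'' should be read in the weaker sense that constant sections in a trivialization already satisfy $\rho_x(\mathbb{A})=\Delta_x$ at every $x$; only the single complement at the centre of the chart is used. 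With the black box re-attributed to \cite{Pe1}, your plan is the intended proof.
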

	
Note that if $\rho$ is a Fredholm morphism, the assumptions of Theorem \ref{T_IntegrabilityDistributionRangeAnchor} are always satisfied. In the Hilbert framework, only the closeness of $\rho$ is required.

\section{Prolongation of a convenient  Lie algebroid along a fibration}
\label{__ProlongationOfAConvenientLieAlgebroidAlongAFibration}
	
\subsection{Prolongation of  an anchored convenient  bundle }
\label{___ProlongationOfAnAnchoredConvenientBundle}
	
Let  ${\bf p}:\mathcal{E}\rightarrow M$ be a convenient vector bundle with typical fibre $\mathbb{E}$ and  $\mathcal{M}$ an open submanifold such that the restriction of ${\bf p}$ to $\mathcal{M}$ is a surjective fibration over $M$ of typical fibre $\mathbb{O}$ (open subset of $\mathbb{E}$). We consider some anchored convenient bundle $({\mathcal{A}},\pi,M,\rho)$.
		
\begin{notations}
\label{N_locM}
If  $(U,\phi)$ is a chart  such that $\mathcal{E}_U$ and $\mathcal{A}_U$ are trivializable, then $TM_U=TM_{| U}$ and $T\mathcal{M}_U$ are also trivializable. In this case, we have trivializations and  local coordinates
\begin{description}
\item[--]
$\mathcal{E}_U\equiv U\times \mathbb{E}$ and $\mathcal{M}_U\equiv U\times \mathbb{O}$  with local coordinates $\mathsf{m}=(\mathsf{x,e})$
\item[--]
$T\mathcal{M}_U\equiv (U\times \mathbb{O})\times\mathbb{M}\times \mathbb{E}$ with local coordinates $(\mathsf{m, v, z})$.
\end{description}
\end{notations}
For  $m\in {\bf p}^{-1}(x)$ we set
\[
{\mathbf{T}}^{\mathcal{A}}_m{\mathcal{M}}=\{(a,\mu)\in{\mathcal{A}}_x\times T_m{\mathcal{M}}: \; \rho(a)=T{\bf p}(\mu)\}.
\]
An element of ${\mathbf{T}}^{\mathcal{A}}_m{\mathcal{M}}$ will be denoted $(m,a,\mu)$.\\
We set ${\mathbf{T}}^{\mathcal{A}}{\mathcal{M}}=\displaystyle\bigcup_{m\in{\mathcal{M}}}{\mathbf{T}}^{\mathcal{A}}_m{\mathcal{M}}$ and we consider the projection $\hat{{\bf p}}:{\mathbf{T}}^{\mathcal{A}}{\mathcal{M}}\rightarrow {\mathcal{M}}$\\
defined by $\hat{{\bf p}}(m,a,\mu)=m$.\\
We  introduce  the following  context:
\begin{enumerate}
\item
Let $\widetilde{\pi}: \widetilde{\mathcal{A}}\rightarrow {\mathcal{M}}$ be the pull-back of the bundle $\pi:{\mathcal{A}}\rightarrow M$ by ${\bf p}:{\mathcal{M}}\rightarrow M$. We denote by $\widetilde{\bf p}$ the canonical vector bundle such that the following diagram is commutative:
\[
\xymatrix{
           \widetilde{\mathcal{A}}   \ar[r]^{\widetilde{{\bf p}}}\ar[d]_{\widetilde{\pi}} & \mathcal{A} \ar[d]^{\pi}\\
           \mathcal{M} \ar[r]^{{\bf p}}              & M\\
}
\]
\item
Consider the map
\[
\begin{array}
[c]{cccc}
\hat{ \rho}:    & {\mathbf{T}}^{\mathcal{A}}{\mathcal{M}}    & \to 	     & T{\mathcal{M}}		\\
                & (m,a,\mu)  		                         & \mapsto 	 & (m,\mu)
\end{array}
\]
and let $ {\bf p}_{\widetilde{\mathcal{A}}}: {\mathbf{T}}^{\mathcal{A}}{\mathcal{M}}\mapsto \widetilde{\mathcal{A}}$ be the map defined by  ${\bf p}_{\widetilde{\mathcal{A}}}(m,a,v,z)=(m,a)$. Then the following diagrams are commutative
\[
\xymatrix{
           {\mathbf{T}}^{\mathcal{A}}{\mathcal{M}}  \ar[r]^{{{\bf p}}_{\widetilde{\mathcal{A}}}}\ar[d]_{\hat{{\bf p}}} & \widetilde{\mathcal{A}} \ar[d]^{\widetilde{\pi}}\\
           \mathcal{M} \ar[r]^{\operatorname{Id}}             & \mathcal{M}\\
}
\;\;\;\;\;\;\;\;\;\;\;\;
\xymatrix{
           {\mathbf{T}}^{\mathcal{ A}}\mathcal{M}  \ar[r]^{\hat{ \rho}}\ar[d]_{{\bf p}_{\mathcal{A}}} & T\mathcal{M} \ar[d]^{T{\bf p}}\\
           \mathcal{A} \ar[r]^{\rho}              & TM\\
}
\]
\item
If $ {\bf p}_{\mathcal{M}}: {T}{\mathcal{M}}\mapsto {\mathcal{M}}$ is the tangent bundle, consider the associated vertical bundle  ${\bf p}_\mathcal{M}^\mathbf{V} :\mathbf{T}\mathcal{M}\rightarrow \mathcal{M}$.
Then there exists a canonical isomorphism  bundle ${\bf \nu}$ from  the pull-back $\widetilde{\bf p}:\widetilde{\mathcal{E}}\rightarrow \mathcal{M}$ of the the bundle ${\bf p}:\mathcal{E}\rightarrow M$ over ${\bf p}:\mathcal{M}\rightarrow M$ to  ${\bf p}_\mathcal{M}^V :\mathbf{V}\mathcal{M}\rightarrow \mathcal{M}$ so that the following diagram is commutative:
\begin{eqnarray}
\label{eq_nu}
\xymatrix{
           \widetilde{\mathcal{E}}  \ar[r]^{{\bf \nu}} \ar[d]_{\widetilde{\bf p}} & \mathbf{V}\mathcal{M} \ar[d]^{{\bf p}_\mathcal{M}^V }\\
           \mathcal{M} \ar[r]^{\operatorname{Id}}              & \mathcal{M}\\
}
\end{eqnarray}
\end{enumerate}

\begin{theorem}
\label{T_Prolongation}
${}$
\begin{enumerate}
\item
$\hat{\bf p}:{\mathbf{T}}^{\mathcal{A}}{\mathcal{M}}\rightarrow {\mathcal{M}}$ is a convenient bundle with typical fibre $\mathbb{A}\times \mathbb{E}$  and
$(\mathbf{T}^{\mathcal{A}}\mathcal{M},\hat{\bf p},\mathcal{M},\hat{\rho}) $ is an anchored bundle.
\item
${{\bf p}}_{\widetilde{\mathcal{A}}}$ is a surjective bundle morphism whose kernel is a subbundle of $\mathbf{T}\mathcal{M}$. The restriction of $\hat{\rho}$ to $\ker{{\bf p}}_{\widetilde{\mathcal{A}}}$ is a bundle isomorphism on $\mathbf{V}\mathcal{M}$.
\item
Given an open subset $V$, then, for each section $\mathbf{X}$ of $\mathbf{T}\mathcal{M}$ defined on the open set $\mathcal{V}=\hat{\mathbf{p}}^{-1}(V)\subset \mathcal{M}$,   there exists a  pair  $(\mathfrak{a},X)$ of a section $\mathfrak{a}$ of $\widetilde{\mathcal{A}}$  and a vector field $X$ on $\mathcal{V}$ such that
\begin{eqnarray}
\label{eq_aX}       
\forall m\in \mathcal{V},\; T\mathbf{p}(X(m))=\rho \circ {\mathbf{p}_\mathcal{A}}(\mathfrak{a}(m)).
\end{eqnarray}
Conversely such a pair  $(\mathfrak{a},X)$ which  satisfies (\ref{eq_aX}) defines a unique section $\mathbf{X}$  on $\mathcal{V}$, the associated pair of $\mathbf{X}$ is precisely  $(\mathfrak{a},X)$ and  with these notations, we have $\hat{\rho}(\mathbf{X})=X$.
\end{enumerate}
\end{theorem}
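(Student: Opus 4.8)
The plan is to establish the three assertions in order, as each relies on the local trivialization set up in Notations \ref{N_locM}. First I would prove (1) by working in a chart $(U,\phi)$ over which $\mathcal{A}_U\equiv U\times\mathbb{A}$ and $\mathcal{M}_U\equiv U\times\mathbb{O}$. In these coordinates an element $(m,a,\mu)$ of $\mathbf{T}^{\mathcal{A}}_m\mathcal{M}$ is represented by $(\mathsf{x,e},\mathsf{a},\mathsf{v,z})$ subject to the constraint $\rho_\mathsf{x}(\mathsf{a})=\mathsf{v}$. Solving this constraint eliminates $\mathsf{v}$, so the fibre over $m\equiv(\mathsf{x,e})$ is parametrized freely by $(\mathsf{a,z})\in\mathbb{A}\times\mathbb{E}$; this gives the typical fibre $\mathbb{A}\times\mathbb{E}$. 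I would check that the transition maps between two such charts are bounded linear in $(\mathsf{a,z})$ and smooth in the base point, which follows because the transition maps for $\mathcal{A}$, for $T\mathcal{M}$, and the field $\mathsf{x}\mapsto\mathsf{\rho_x}$ of Notations \ref{N_Anchor}(4) are all smooth; hence $\hat{\mathbf{p}}$ is a convenient vector bundle. That $(\mathbf{T}^{\mathcal{A}}\mathcal{M},\hat{\mathbf{p}},\mathcal{M},\hat{\rho})$ is an anchored bundle is then immediate once $\hat{\rho}\colon(m,a,\mu)\mapsto(m,\mu)$ is seen to be a bounded vector bundle morphism into $T\mathcal{M}$, which is clear in the local model.

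For (2) I would read off from the local model that $\mathbf{p}_{\widetilde{\mathcal{A}}}(m,a,\mu)=(m,a)$ is, in coordinates, the projection $(\mathsf{x,e},\mathsf{a,z})\mapsto(\mathsf{x,e},\mathsf{a})$, so it is a surjective bundle morphism whose kernel is the set of $(\mathsf{x,e},0,\mathsf{z})$, a subbundle with fibre $\{0\}\times\mathbb{E}$. An element of the kernel is $(m,0,\mu)$ with $\rho(0)=T\mathbf{p}(\mu)=0$, i.e.\ $\mu$ is vertical; thus $\hat{\rho}$ restricted to $\ker\mathbf{p}_{\widetilde{\mathcal{A}}}$ sends $(m,0,\mu)$ to the vertical vector $(m,\mu)\in\mathbf{V}\mathcal{M}$. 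In coordinates this is $\mathsf{z}\mapsto\mathsf{z}$, manifestly a bounded bundle isomorphism onto $\mathbf{V}\mathcal{M}$, compatible with the isomorphism ${\bf \nu}$ of diagram (\ref{eq_nu}).

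For (3) I would argue both directions. Given a section $\mathbf{X}$ of $\mathbf{T}\mathcal{M}$ over $\mathcal{V}$, composing with the two legs of the second commutative square in the construction produces $\mathfrak{a}:=\mathbf{p}_{\widetilde{\mathcal{A}}}\circ\mathbf{X}$, a section of $\widetilde{\mathcal{A}}$ over $\mathcal{V}$, and $X:=\hat{\rho}\circ\mathbf{X}$, a vector field on $\mathcal{V}$; the defining constraint $\rho(a)=T\mathbf{p}(\mu)$ of $\mathbf{T}^{\mathcal{A}}\mathcal{M}$, applied pointwise to $\mathbf{X}(m)=(m,\mathfrak{a}(m),X(m))$, is exactly (\ref{eq_aX}). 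Conversely, given a compatible pair $(\mathfrak{a},X)$ satisfying (\ref{eq_aX}), the assignment $m\mapsto(m,\mathbf{p}_\mathcal{A}(\mathfrak{a}(m)),X(m))$ lands in $\mathbf{T}^{\mathcal{A}}_m\mathcal{M}$ precisely because (\ref{eq_aX}) is the membership condition, defining a section $\mathbf{X}$; uniqueness and the identities $\mathbf{p}_{\widetilde{\mathcal{A}}}\circ\mathbf{X}=\mathfrak{a}$, $\hat{\rho}(\mathbf{X})=X$ follow by construction. Smoothness of $\mathbf{X}$ from that of $(\mathfrak{a},X)$ is checked in the local model, where $\mathbf{X}\equiv(\mathsf{x,e})\mapsto(\mathsf{x,e},\mathsf{a}(\mathsf{x,e}),\mathsf{v}(\mathsf{x,e}),\mathsf{z}(\mathsf{x,e}))$ with $\mathsf{v}=\mathsf{\rho_x}(\mathsf{a})$ determined smoothly.

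I expect the main obstacle to be the existence half of (3): producing, for an \emph{arbitrary} section $\mathbf{X}$, the section $\mathfrak{a}$ of $\widetilde{\mathcal{A}}$ whose composite with $\rho$ matches $T\mathbf{p}\circ X$. While $\mathfrak{a}=\mathbf{p}_{\widetilde{\mathcal{A}}}\circ\mathbf{X}$ is forced, one must verify that this genuinely recovers a smooth section of $\widetilde{\mathcal{A}}$ rather than merely a set-theoretic map, and that the compatibility (\ref{eq_aX}) holds identically; this rests on the first commutative square identifying $\mathbf{p}_{\widetilde{\mathcal{A}}}$ with the pullback structure and on the smoothness of the constraint-solving in the convenient setting, which I would justify via the bounded linearity of $\mathsf{\rho_x}$ and the cartesian closedness properties of $C^\infty$ recalled from \cite{KrMi}.
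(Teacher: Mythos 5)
Your proposal is correct and follows essentially the same route as the paper: local trivialization via elimination of the constrained variable $\mathsf{v}=\mathsf{\rho_x(a)}$ to exhibit the typical fibre $\mathbb{A}\times\mathbb{E}$, verification that the transition maps are smooth fields of bounded isomorphisms, the coordinate reading of $\mathbf{p}_{\widetilde{\mathcal{A}}}$ and its kernel for part (2), and the forced definitions $\mathfrak{a}=\mathbf{p}_{\widetilde{\mathcal{A}}}\circ\mathbf{X}$, $X=\hat{\rho}(\mathbf{X})$ together with the membership condition for part (3). The smoothness concern you raise at the end is handled exactly as you suggest (and the paper treats it as immediate), since $\mathfrak{a}$ is a composition of the smooth bundle morphism $\mathbf{p}_{\widetilde{\mathcal{A}}}$ with the smooth section $\mathbf{X}$.
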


\begin{proof} 		
(1) Let $(U,\phi)$ be a chart on $M$ such that we have a trivialization $\tau:\mathcal{A}_U\rightarrow \phi(U)\times \mathbb{A}$ and $\Phi:\mathcal{M}_U\rightarrow \phi(U)\times \mathbb{O}\subset \phi(U)\times \mathbb{E}$. Then $T\phi$ is a trivialization of $TM_U$  on $\phi(U)\times \mathbb{M}$ and $T\Phi$ is a trivialization of $T\mathcal{M}_U$ on $\phi(U)\times \mathbb{O}\times\mathbb{M}\times\mathbb{E}$.\\
To be very precise, according to the notations in  $\S$ \ref{___LocalIdentificationsAndExpressionsInAConvenientBundle}, we have
\begin{description}
\item			
	$\phi(x)\equiv\mathsf{x} $;
\item			
	$\tau(x,a)\equiv (\mathsf{x,a})$;
\item			
	$\Phi(x,e)\equiv (\mathsf{x,e})$ and for $m=(x,e)$, $\Phi(m)\equiv \mathsf{m}$;
\item			
	$T\phi(x,v)\equiv (\mathsf{x,v})$;
\item			
	$T\Phi(m, \mu)= T\Phi(x,e,v,z)\equiv(\mathsf{x,e,v,z})$
\end{description}			
where $\equiv$ stands for  "denoted".\\
By the way, in this local context and with these notations, we have
\begin{eqnarray}
\label{eq_locTAM} 
\mathbf{T}^{\mathcal{A}}\mathcal{M}_U\equiv\{\mathsf{(x,e,a,v, z})\in \phi(U)\times\mathbb{O}\times\mathbb{A}\times \mathbb{M}\times\mathbb{E}\;:\; \mathsf{v={\bf \rho}_x(a)}\}
\end{eqnarray}
where ${\bf \rho}$ corresponds to the local expression of the anchor.
It follows that:
\begin{eqnarray}
\label{eq_LocTAMstrict}
\mathbf{T}^{\mathcal{A}}\mathcal{M}_U\equiv\{\mathsf{(x,e,a,\rho_x(b),z}) \;\;: (\mathsf{x,e,a,z})\in  \phi(U)\times\mathbb{O}\times \mathbb{A}\times \mathbb{E}\},
\end{eqnarray}
and so the map $\mathbf{T}\Phi: \mathbf{T}^{\mathcal{A}}\mathcal{M}_U\rightarrow \phi(U)\times\mathbb{O}\times \mathbb{A}\times \mathbb{E}$  defined by
$\mathbf{T}\Phi(x,e,a,v,z)\equiv(\mathsf{x,e,a,z})$
is a smooth map which is bijective. Moreover, for each $m=(x,e)\in \mathcal{M}_U$ the restriction $\mathbf{T}{\Phi}_m$ to $\mathbf{T}_m\mathcal{M}_U$ is clearly linear and we have the following commutative diagram
\[
\xymatrix{
           \mathbf{T}^\mathcal{A}\mathcal{M}_U  \ar[r]^{\mathbf{T}{\Phi}}\ar[d]_{\hat{\mathbf{p}}} & \phi(U)\times\mathbb{O}\times \mathbb{A}\times\mathbb{E} \ar[d]^{\hat{\pi}_1}\\
           {\mathcal{M}}_U \ar[r]^{\Phi}              & \phi(U )\times\mathbb{O}\\
}
\]
This shows that $\mathbf{T}{\Phi}$ is a local trivialization of $\mathbf{T}^\mathcal{A}\mathcal{M}_U$ modelled on $\mathbb{A}\times \mathbb{E}$.\\
Now, in this local context, $\hat{\rho}(x,e,a,v,z)\equiv (\mathsf{x,e,a,z})$.\\

Consider two such chart domains $U$ and $U'$ in $M$ such that $U\cap U'\not=\emptyset$. Then we have:
\begin{itemize}
\item[--]
the transition maps associated to the trivializations  $\tau$ and $\tau'$ in $\mathcal{A}$ are of type
\[
\mathsf{(x,a)}\mapsto \mathsf{\left( t(x), G_x(a) \right) }
\]
where $\mathsf{x\mapsto G_{x}}$ takes value in $\operatorname{GL}(\mathbb{A})$,  and  is a smooth  map from $U\cap U^\prime$ into  $\operatorname{L}(\mathbb{A})$;
\item[--]
 the  transition maps associated to the trivializations  $\Phi$ and $\Phi'$ in  $\mathcal{M}$
 are of type
\[
\mathsf{(x,e,a,v)} \mapsto \mathsf{\left( t(x),F_x(e) \right) }
\]
where $\mathsf{x} \mapsto \mathsf{F_{x}}$ takes value in $\operatorname{GL}(\mathbb{E})$, and  is a smooth  map from $U\cap U^\prime$ into  $\operatorname{L}(\mathbb{E})$;
\item[--]
if  $\widetilde{\Phi}$ and $\widetilde{\Phi}'$ are the trivializations of $\widetilde{\mathcal{A}}$ associated  to $\Phi$ and $\Phi'$, the  transition maps associated to  trivializations  $\widetilde{\Phi}$ and $\widetilde{\Phi}'$ in $\widetilde{\mathcal{A}}$ are of type
\[
\mathsf{(x,e,a)} \mapsto \mathsf{\left( t(x),F_x(e),G_{x}(a) \right) };
\]
\item[--]
the  transition maps associated to  trivializations  $T\Phi$ and $T\Phi'$ in $T\mathcal{M}$ are of type
\[
\mathsf{(x,e,v,z)} \mapsto \left( \mathsf{ t(x),F_x(e),}  d\mathsf{_xt(y), H_{(x,e)}(v) } \right)
\]
where  $\mathsf{(x,e)} \mapsto \mathsf{H_{(x,e)}}$  takes value in $\operatorname{GL}(\mathbb{E})$ and is a smooth map from $U\cap U^\prime$ to $\operatorname{L}(\mathbb{E})$;
\item[--]
the transition maps associated to  trivializations  ${\bf T}\Phi$ and ${\bf T}\Phi'$ in ${\bf T}^\mathcal{A}\mathcal{M}$ are of type
\[
\mathsf{(x,e,a,z)} \mapsto \mathsf{\left( t(x),F_x(e),G_{x}(a), H_{(x,e)}(z)) \right) }.
\]
\end{itemize}
Clearly, this implies that $\hat{\bf p}:{\bf T}\mathcal{M}\to \mathcal{M}$ is a convenient bundle.\\

Now, in a  trivialization $\tau$  and $T\phi$,  we write  $\rho(x,a)\equiv (\mathsf{x,a}) \mapsto (\mathsf{x,\rho_x(a)})$.
If, in another trivialization $\tau'$ and $ T\phi'$, we write $\rho(x,a)\equiv (\mathsf{x',a'}) \mapsto (\mathsf{x',\rho'_{x'}(a')})$, then, for the  associated transition maps, we have
\[
\mathsf{(x',\rho'_{x'})= (x',} d \mathsf{_x t\circ \rho_x\circ G_x.)}
\]
It follows easily that $\hat{\rho}$ is a bundle convenient morphism.\\

(2) By construction, the following diagram is commutative:
\[
\xymatrix{
           {\bf T}^{\mathcal{A}}\mathcal{M} \ar[r]^{\hat{\rho}}\ar[d]_{{\bf p}_{\widetilde{\mathcal{A}}}} & T{\mathcal{M}} \ar[d]^{p_\mathcal{M}}\\
           \widetilde{\mathcal{A}} \ar[r]^{\widetilde{\pi}}              & \mathcal{M}\\
}
\]		

In the trivialization  $\widetilde{\Phi}:\widetilde{A}_{\mathcal{M}_U}\rightarrow \phi(U)\times\mathbb{O}\times \mathbb{A}$, using the same convention as previously, we have
\[
\{(x,e,a,v,z)\mapsto {{\bf p}}_{\widetilde{\mathcal{A}}}(x,e,a,v)\}\equiv \{(\mathsf{x,e,a,v,z})\mapsto (\mathsf{x,e,a})\}.
\] 			
Thus, by analog arguments as in the proof of (1),  it is clear that	${{\bf p}}_{\widetilde{\mathcal{A}}}$
is compatible with the transition maps associated to the trivializations over the chart domains $U$ and $U'$ of $M$ for ${\bf T}^{\mathcal{A}}\mathcal{M}$ and for  $\widetilde{\mathcal{A}}$.	
Thus ${\bf p}_{\widetilde{\mathcal{A}}}:  {\bf T}^{\mathcal{A}}\mathcal{M}\to \widetilde{\mathcal{A}}$
 is a Banach bundle morphism which is surjective.\\
\noindent From the construction of $\mathbf{T}^{\mathcal{A}}\mathcal{M}$ we have
\[
\ker{{\bf p}}_{\widetilde{\mathcal{A}}}=\{(m,a,v)\in \mathbf{T}_m^\mathcal{A}\mathcal{M}\;\;: \rho(a)=0\}.
\]
The definition of $\hat{\rho}$ implies  that its restriction to $\ker{{\bf p}}_{\widetilde{\mathcal{A}}}$ is an isomorphism onto $\mathbf{V}\mathcal{M}$, which ends the proof of (2).\\

(3) Let $\mathbf{X}$  be a section of $\mathbf{T}\mathcal{M}$ defined on $\mathcal{V}=\hat{\mathbf{p}}^{-1}(V)$. According to (2), if  we set $\mathfrak{a}=\mathbf{p}_{\widetilde{\mathcal{A}}}\circ \mathbf{X}$ and $X=\hat{\rho}(\mathbf{X})$, the pair  $(\mathfrak{a},X)$ is well defined and  from the definition of $\mathbf{T}^\mathcal{A}\mathcal{M}$ the relation (\ref{eq_aX}) is satisfied. Conversely, if $\mathfrak{a}$ is a section $\mathfrak{a}$ of $\widetilde{\mathcal{A}}$  and $X$ a vector field  on $\mathcal{V}$, the relation (\ref{eq_aX}) means exactly that $\mathbf{X}(m)=(\widetilde{\mathbf{p}}(\mathfrak{a}(m)), X(m))$  belongs to $\mathbf{T}_m^\mathcal{A}\mathcal{M}$; so we get a section $\mathbf{X}$ of $\mathbf{T}^\mathcal{A}\mathcal{M}$. Now it is clear that    $\mathfrak{a}=\mathbf{p}_{\widetilde{\mathcal{A}}}\circ \mathbf{X}$ and $X=\hat{\rho}(\mathbf{X})$.	
\end{proof}
	
\begin{definition}
\label{D_ProlongantionOfAnAnchorBundleOverAFibration}
The anchored bundle $(\mathbf{T}^{\mathcal{A}}\mathcal{M}, \hat{\mathbf{p}}, \mathcal{M}, \hat{\rho})$ is called the prolongation of $(\mathcal{A},\pi,M,\rho)$ over $\mathcal{M}$. The subbundle $\ker{{\bf p}}_{\widetilde{\mathcal{A}}}$ will be denoted $\mathbf{V}^{\mathcal{A}}\mathcal{M}$ and is called the vertical subbundle.
\end{definition}
	
\begin{remark}
\label{R_VerticalBundle}
According to the proof of Theorem \ref{T_Prolongation}, if ${\bf V}^{\mathcal{A}}\mathcal{M}_U$ is the restriction of ${\bf V}^{\mathcal{A}}\mathcal{M}$ to $\mathcal{M}_U$, we have
\[
{\bf V}^{\mathcal{A}}\mathcal{M}_U\equiv\{\mathsf{(x,e,a,0,z}) \;\;: (\mathsf{x,e,a,z})\in  \phi(U)\times\mathbb{O}\times \mathbb{A}\times \mathbb{E}\}.
\]
\end{remark}

\begin{examples}
\label{Ex_Class}
For simplicity in these examples we assume that the model space $\mathbb{M}$ of $M$ and the typical fiber $\mathbb{A}$ of $\mathcal{A}$ are  Banach spaces.
\begin{enumerate}
\item
If $\mathcal{A}={\mathcal{M}}=TM$ then we have  ${\mathbf{T}}{\mathcal{A}}=TTM$ and  ${\mathbf{T}}{\mathcal{A}}^*=TT^{*}M$ and the anchor $\hat{\rho}$ is the identity.
\item
If $\mathcal{A}={\mathcal{M}}$ then $\mathbf{T}^{\mathcal{A}}\mathcal{A}$ is simply denoted $\mathbf{T}\mathcal{A}$ 
				and the anchor  $\hat{\rho}$ is the map $(x,a,b,c)\mapsto (x,a,\rho(b),\nu(c))$ from  $\mathbf{T}\mathcal{A}$ to $T\mathcal{A}$.
\item
If $\mathcal{M}=\mathcal{A}^*$  then $\mathbf{T}^{\mathcal{A}}\mathcal{A}^*$ is simply denoted $\mathbf{T}\mathcal{A}^*$ and the anchor  $\hat{\rho}$ is the map $(x,\xi,a,\eta)\mapsto (x,\xi,\rho(a),\nu(\eta)$ from $\mathbf{T}\mathcal{A}^*$ to $T\mathcal{A}^*$.			
\item
If $\mathcal{M}=\mathcal{A}\times_M {\mathcal{A}}^*$ then  $\mathbf{T}^{\mathcal{A}}\mathcal{M}$ is simply denoted $\mathbf{T}(\mathcal{A}\times\mathcal{A}^*)$ and the anchor  $\hat{\rho}$ is the map $(x,a,\xi,b,c,\eta)\mapsto (x,a,\xi,\rho(b),\nu(v),\eta))$ from $\mathbf{T}(\mathcal{A}\times\mathcal{A}^*)$ to  $T(\mathcal{A}\times\mathcal{A}^*)$.		 
\item
If $\mathcal{M}$ is a conic submanifold of $\mathcal{A}$ (cf \cite{Pe2}), then $\mathbf{T}^{\mathcal{A}}\mathcal{M}$ is simply denoted $\mathbf{T}\mathcal{M}$ and the anchor $\hat{\rho}$ is the same expression as in (2).
\end{enumerate}
\end{examples}

\begin{remark}
\label{R_Chart} 
We come back to the previous general context: $(\mathcal{A},\pi, M,[.,.]_\mathcal{A})$ is a convenient Lie algebroid and $\mathbf{p}:\mathcal{E}\to M$is  a convenient vector bundle and $\mathcal{M}$ is an open submanifold of $\mathcal{E}$ which is fibered on $M$.\\
Let  $(U, \phi)$ be a chart of $M$ such that $\mathcal{A}_U$  (resp. $\mathcal{E}_U$) is trivial and we have denoted
\begin{center}
$\tau:\mathcal{A}_U\rightarrow \phi(U)\times \mathbb{A}$ (resp. $\Phi : \mathcal{E}_U\rightarrow \phi(U)\times \mathbb{E}$)
\end{center}
 the associated trivialization (cf. notations in the proof of Theorem  \ref{T_Prolongation}). Then we have a canonical anchored bundle			
\begin{center}
$(\phi(U)\times \mathbb{A}, \pi_1,\phi(U), \mathsf{r}= T\phi\circ \rho \circ\phi^{-1})$
\end{center}
where $\pi_1:\phi(U)\times \mathbb{A}\rightarrow \phi(U)$.\\			
Therefore,  the prolongation  $\phi(U)\times \mathbb{A}$ over $\phi(U)\times \mathbb{E}$ is then
\[
\mathbf{T}^{\phi(U)\times \mathbb{A}}(\phi(U)\times \mathbb{E})=\phi(U)\times \mathbb{E}\times\mathbb{A}\times\mathbb{E}
\]
and the anchor $\hat{\mathsf{r}}$  is the map $(\mathsf{x,e,a,z}) \mapsto  (\mathsf{x,e, r(a), z})$.\\
Note that the bundle $\mathbf{T}^{\phi(U)\times \mathbb{A}}(\phi(U)\times \mathbb{E})=\phi(U)\times \mathbb{E}\times\mathbb{A}\times\mathbb{E}$ can be identified with the subbundle
\[
\left\{ (\mathsf{x,e,a,0,v})\in U\times \mathbb{E}\times\mathbb{A}\times\mathbb{M}\times\mathbb{E} \right\}
\]
of $T(\phi(U)\times \mathbb{E})=\phi(U)\times \mathbb{E}\times\mathbb{A}\times\mathbb{M}\times\mathbb{E}.$\\
An analog description  is true for any open set $\mathcal{U}$ in $\mathcal{E}$ such that $\mathbf{p}(\mathcal{U})=U$ since $\mathcal{U}$ is contained in $\mathcal{M}_U$.\\
\end{remark}
		
Given a fibred  morphism  $\Psi:{\mathcal{M}}\rightarrow {\mathcal{M}}'$ between the fibred manifolds ${\bf p}:{\mathcal{M}}\rightarrow M$  and ${\bf p}':{\mathcal{M}'}\rightarrow M'$  over $\psi:M\rightarrow M'$ and a morphism of anchored bundles $ \varphi$ between  $({\mathcal{A}},\pi, M,\rho)$ and $({\mathcal{A}}',\pi', M',\rho')$,  over  $\psi:M\rightarrow M'$, we get a map
\begin{eqnarray}\label{eq_bfTPsi}
\begin{array}[c]{cccc}
\label{eq_TPsi}
{\mathbf{T}}\Psi:  & {\mathbf{T}}^{\mathcal{A}}{\mathcal{M}}& \to & {\mathbf{T}}^{\mathcal{A}'}{\mathcal{M}}'\\
    & (m,a,\mu)  	& \mapsto 	& ( \Psi(m);\varphi(a),T_m\Psi(\mu) )
\end{array}
\end{eqnarray}

\begin{remark}
\label{R_chartTM}
As in the proof of Theorem  \ref{T_Prolongation},  consider a chart  $(\mathcal{U},\Phi)$  of $\mathcal{M}$ where $\mathbf{p}(\mathcal{U})=U$ and let $\tau:\mathcal{A}_U\rightarrow U\times\mathbb{A}$ be an associated trivialization of $\mathcal{A}_U$.  According to Remark \ref{R_Chart},   since $\Phi$ is a smooth diffeomorphism from $\mathcal{U}$ onto its range $\phi(U)\times\mathbb{O}$ in $\phi(U)\times\mathbb{E}$, then $\mathbf{T}\Phi: \mathbf{T}^{\mathcal{A}}\mathcal{M}_{| \mathcal{U}}\rightarrow \mathbf{T}^{\phi(U)\times \mathbb{A}}\mathcal{U}=\Phi(\mathcal{U})\times\mathbb{A}\times\mathbb{E}$ is a bundle isomorphism  and so  $(\mathbf{T}^{\mathcal{A}}\mathcal{M}_{| \mathcal{U}},\mathbf{T}\Phi)$ is a chart for $\mathbf{T}^\mathcal{A}\mathcal{M}$.
\end{remark}
		
\begin{notations}
\label{N_Prolongations}
From now on, the anchored bundle $(\mathcal{A},\pi,M,\rho)$ is fixed  and, if no confusion is possible, we simply denote  by $\mathbf{T}\mathcal{M}$ and $\mathbf{V}\mathcal{M}$   the sets $\mathbf{T}^{\mathcal{A}}\mathcal{M}$    and $\mathbf{V}^\mathcal{A}\mathcal{M}$  respectively. In particular {\bf when $\mathcal{M}=\mathcal{A}$, the prolongation $\mathbf{T}\mathcal{A}$ will be simply called the prolongation of the  the Lie algebroid $\mathcal{A}$}\index{prolongation of a convenient Lie algebroid}. The bundle $\mathbf{V}\mathcal{M}$  will be considered as a subbundle of $\mathbf{T}\mathcal{M}$ as well as of $T\mathcal{M}$.
\end{notations}

\subsection{Connections on a prolongation}
\label{___ConnectionsOnAProlongation}
Classically (\cite{KrMi}, 37), a \emph{connection}\index{connection}
\footnote{In the finite dimensional context such a connection is sometimes called a nonlinear connection.} on a convenient vector bundle $\mathbf{p}:\mathcal{E}\rightarrow M$ is a Whitney decomposition $T\mathcal{E}=H\mathcal{E}\oplus V\mathcal{E}$.
Now, as in finite dimension  we introduce this notion on $\mathbf{T}\mathcal{M}$.
\begin{definition}
\label{D_NLConnection}
A connection on $\mathbf{T}\mathcal{M}$ is  a decomposition of this bundle in a Whitney sum $\mathbf{T}\mathcal{M}=\mathbf{H}\mathcal{M}\oplus \mathbf{V}\mathcal{M}$.\end{definition}
Such a decomposition is equivalent to  the datum of an endomorphism $\bf{N}$ of  $\mathbf{T}\mathcal{M}$ such that $\mathbf{N}^2=\operatorname{Id}$   with $\mathbf{T}\mathcal{M}=\operatorname{ker} (\operatorname{Id}+\mathbf{N})$ and $\mathbf{H}\mathcal{M}=\operatorname{ker} (\operatorname{Id}-\mathbf{N})$ where $\operatorname{Id}$ is the identity morphism of $\mathbf{T}\mathcal{M}$. We naturally get two projections:
\begin{description}
\item[ ]
$h_\mathbf{N} =\displaystyle\frac{1}{2}(\operatorname{Id}+\mathbf{N}): \mathbf{T}\mathcal{M}\rightarrow \mathbf{H}\mathcal{M}$
\item[ ]
$v_\mathbf{N}=\displaystyle\frac{1}{2}(\operatorname{Id}-\mathbf{N}): \mathbf{T}\mathcal{M}\rightarrow \mathbf{V}\mathcal{M}$.
\end{description}
$v_\mathbf{N}$ and  $h_\mathbf{N}$ are called respectively  the \emph{vertical}\index{projector!vertical} and \emph{horizontal} projector\index{projector!horizontal} of $\mathbf{N}$.\\
Using again the context of Remark \ref{R_chartTM}, we have  charts
		$$\Phi:\mathcal{M}_U\rightarrow \phi(U)\times \mathbb{O}\;\;\textrm{ and}\;\;\mathbf{T}\Phi:\mathbf{T}\mathcal{M}_U\rightarrow \phi(U)\times \mathbb{O}\times \mathbb{A}\times\mathbb{E}.$$
If $\mathbf{N}$ is a  connection on $\mathbf{T}\mathcal{M}$, then  $\mathsf{N}=\mathbf{T}\Psi\circ \mathbf{N}\circ \Psi^{-1}$ is a non linear connection on the trivial bundle $ \phi(U)\times \mathbb{O}\times \mathbb{A}\times\mathbb{E}$. Thus $\mathsf{N}$ can be written as a matrix field of endomorphisms of $\mathbb{A}\times\mathbb{E}$ of type		
\begin{eqnarray}
\label{eq_Christoffel}
\begin{pmatrix}
\operatorname{Id}_\mathbb{A}	&	0\\
-2\digamma				& 	-\operatorname{Id}_\mathbb{E}
\end{pmatrix}
\end{eqnarray}
and so the associated horizontal (resp. vertical) projector is given by
\begin{description}
\item
$\mathsf{h_N}_{\mathsf{m}}(\mathsf{a,z})=\displaystyle\frac{1}{2}(\mathsf{a,z})- \digamma(\mathsf{a})$;
\item
$\mathsf{v_N}_{\mathsf{m}}(\mathsf{a,z})=\displaystyle\frac{1}{2}(\mathsf{a,v})+\digamma(\mathsf{a})$.
\end{description}
The associated horizontal space in $\{\mathsf{m}\}\times \mathbb{A}\times \mathbb{E}$ is
\[
\left\{ \displaystyle\frac{1}{2}(\mathsf{a,z})- \digamma(\mathsf{a}), (\mathsf{a,z})\in \{\mathsf{m}\}\times \mathbb{A}\times\mathbb{E}\right\}
\]
and the  associated vertical space in $\{\mathsf{m}\}\times \mathbb{A}\times \mathbb{E}$ is $\{\mathsf{m}\}\times \{\mathsf{0}\}\times \mathbb{E}$.\\
$\digamma$ is called the \emph{(local) Christoffel symbol of} ${\bf N}$.\\
Let $\widetilde{\bf p}: \widetilde{\mathcal{A}\times\mathcal{E}}\rightarrow \mathcal{M}$ the fibered product bundle over $\mathcal{M}$  of
$({\pi},{\bf p}): {\mathcal{A}}\times {\mathcal{E}}\rightarrow {M}$.  We have natural  inclusions $\iota_1 :\widetilde{\mathcal{A}}\rightarrow \widetilde{\mathcal{A}\times\mathcal{E}}$ and
		$\iota_2 :\widetilde{\mathcal{E}}\rightarrow \widetilde{\mathcal{A}\times\mathcal{E}}$, given respectively by $\iota_1(m,a)=(m,a,0)$ and $\iota_1(m,z)=(m,0,z)$,  such that
\begin{eqnarray}
\label{eq_DecompAE}
\widetilde{\mathcal{A}\times\mathcal{E}}=\iota_1 (\widetilde{\mathcal{A}})\oplus \iota_2 (\widetilde{\mathcal{E}}).
\end{eqnarray}
With these notations, we have
\begin{proposition}
\label{P_IsoConnection}
${}$
\begin{enumerate}
\item
There exists a non linear connection   $\mathbf{N}$ on $\mathbf{T}\mathcal{M}$ if and only if there exists a convenient bundle morphism $\mathbf{H}$ from   $\widetilde{\mathcal{A}}$ to $\mathbf{T}\mathcal{M}$ such that  $\mathbf{T}\mathcal{M}=\mathbf{H}(\widetilde{\mathcal{A}})\oplus \mathbf{V}\mathcal{M}$. In this case $\mathbf{T}\mathcal{M}$ is isomorphic to $\widetilde{\mathcal{A}\times\mathcal{E}}$.
\item
Assume that $\mathbf{N}$ is a connection on ${\bf T}\mathcal{M}$. Let  $\Upsilon$ be semi-basic vector valued   $\Upsilon$ \footnote{ that is a morphism from ${\bf T}\mathcal{M}$ to ${\bf V}\mathcal{M}$ such that $\Upsilon(\mathbf{Z})=0$ for any local vertical section $\mathbf{Z}$}, then $\mathbf{N}+\Upsilon$ is a connection on ${\bf T}\mathcal{M}$. Conversely, given any nonlinear connection $\mathbf{N}'$ on ${\bf T}\mathcal{M}$, there exists a unique  semi-basic  vector valued   $\Upsilon$  such that $\mathbf{N}'=\mathbf{N}+\Upsilon$.
\end{enumerate}
\end{proposition}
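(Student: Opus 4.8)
The plan is to read a connection as a splitting of the short exact sequence of convenient vector bundles over $\mathcal{M}$
\[
0 \longrightarrow \mathbf{V}\mathcal{M} \hookrightarrow \mathbf{T}\mathcal{M} \xrightarrow{\ \mathbf{p}_{\widetilde{\mathcal{A}}}\ } \widetilde{\mathcal{A}} \longrightarrow 0,
\]
whose exactness is precisely the content of Theorem \ref{T_Prolongation}(2), since $\mathbf{V}\mathcal{M}=\ker\mathbf{p}_{\widetilde{\mathcal{A}}}$ and $\mathbf{p}_{\widetilde{\mathcal{A}}}$ is a surjective bundle morphism. A connection $\mathbf{N}$, viewed through Definition \ref{D_NLConnection} as the datum of a horizontal complement $\mathbf{H}\mathcal{M}$ of the \emph{fixed} vertical bundle $\mathbf{V}\mathcal{M}$, is then nothing but a right splitting of this sequence.

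Part (1). In one direction, given a connection $\mathbf{N}$ with horizontal bundle $\mathbf{H}\mathcal{M}=\ker(\operatorname{Id}-\mathbf{N})$, the equality $\mathbf{V}\mathcal{M}=\ker\mathbf{p}_{\widetilde{\mathcal{A}}}$ makes $\mathbf{p}_{\widetilde{\mathcal{A}}}|_{\mathbf{H}\mathcal{M}}\colon\mathbf{H}\mathcal{M}\to\widetilde{\mathcal{A}}$ a fibrewise bounded linear isomorphism; I would set $\mathbf{H}:=(\mathbf{p}_{\widetilde{\mathcal{A}}}|_{\mathbf{H}\mathcal{M}})^{-1}$, so that $\mathbf{H}(\widetilde{\mathcal{A}})=\mathbf{H}\mathcal{M}$ and $\mathbf{T}\mathcal{M}=\mathbf{H}(\widetilde{\mathcal{A}})\oplus\mathbf{V}\mathcal{M}$. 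Conversely, any bundle morphism $\mathbf{H}\colon\widetilde{\mathcal{A}}\to\mathbf{T}\mathcal{M}$ realizing this Whitney sum yields the horizontal complement $\mathbf{H}\mathcal{M}:=\mathbf{H}(\widetilde{\mathcal{A}})$ of $\mathbf{V}\mathcal{M}$, hence a connection in the sense of Definition \ref{D_NLConnection}. For the final identification I would combine $\mathbf{p}_{\widetilde{\mathcal{A}}}|_{\mathbf{H}\mathcal{M}}\colon\mathbf{H}\mathcal{M}\xrightarrow{\sim}\widetilde{\mathcal{A}}$ with the isomorphism $\mathbf{V}\mathcal{M}\xrightarrow{\sim}\widetilde{\mathcal{E}}$ given by $\nu^{-1}\circ\hat{\rho}$ (Theorem \ref{T_Prolongation}(2) together with (\ref{eq_nu})); the Whitney sum then transports onto $\iota_1(\widetilde{\mathcal{A}})\oplus\iota_2(\widetilde{\mathcal{E}})=\widetilde{\mathcal{A}\times\mathcal{E}}$ via (\ref{eq_DecompAE}).

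Part (2). This reduces to a purely algebraic identity once one records that every connection shares the same vertical bundle, so that $\mathbf{N}|_{\mathbf{V}\mathcal{M}}=-\operatorname{Id}$ and $\mathbf{N}|_{\mathbf{H}\mathcal{M}}=\operatorname{Id}$. For a semi-basic $\Upsilon$, decomposing along $\mathbf{T}\mathcal{M}=\mathbf{H}\mathcal{M}\oplus\mathbf{V}\mathcal{M}$ gives $\Upsilon^2=0$ (as $\operatorname{im}\Upsilon\subseteq\mathbf{V}\mathcal{M}=\ker\Upsilon$), $\mathbf{N}\Upsilon=-\Upsilon$ (as $\mathbf{N}=-\operatorname{Id}$ on $\operatorname{im}\Upsilon$) and $\Upsilon\mathbf{N}=\Upsilon$ (as $\Upsilon$ annihilates the vertical part), whence $(\mathbf{N}+\Upsilon)^2=\operatorname{Id}+\mathbf{N}\Upsilon+\Upsilon\mathbf{N}+\Upsilon^2=\operatorname{Id}$; a direct check then shows $\ker(\operatorname{Id}+\mathbf{N}+\Upsilon)=\mathbf{V}\mathcal{M}$, so $\mathbf{N}+\Upsilon$ is again a connection. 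For the converse I would set $\Upsilon:=\mathbf{N}'-\mathbf{N}$: since $\mathbf{N}$ and $\mathbf{N}'$ both equal $-\operatorname{Id}$ on $\mathbf{V}\mathcal{M}$ we get $\Upsilon|_{\mathbf{V}\mathcal{M}}=0$, and writing $\Upsilon=2(v_{\mathbf{N}}-v_{\mathbf{N}'})$ with both vertical projectors landing in $\mathbf{V}\mathcal{M}$ gives $\operatorname{im}\Upsilon\subseteq\mathbf{V}\mathcal{M}$; thus $\Upsilon$ is semi-basic and is manifestly the unique solution of $\mathbf{N}'=\mathbf{N}+\Upsilon$.

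The linear algebra above is unproblematic; the one point deserving care, and the main obstacle in the convenient category, is the smoothness and boundedness of the maps produced — chiefly the inverse $(\mathbf{p}_{\widetilde{\mathcal{A}}}|_{\mathbf{H}\mathcal{M}})^{-1}$ and the isomorphism $\mathbf{T}\mathcal{M}\cong\widetilde{\mathcal{A}\times\mathcal{E}}$ — since in infinite dimensions neither the boundedness of a fibrewise inverse nor the fact that the image of $\mathbf{H}$ splits is automatic. I would settle this by passing to the local trivializations of Remark \ref{R_chartTM}, where by (\ref{eq_Christoffel}) the connection is encoded by a smooth field of bounded Christoffel operators $\digamma$; in these coordinates the horizontal lift $\mathbf{H}$ and all the identifications are given by explicit formulas that are visibly smooth and bounded, and globalisation follows from the compatibility with the transition maps already established in the proof of Theorem \ref{T_Prolongation}.
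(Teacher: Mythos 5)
Your proof is correct and follows essentially the same route as the paper's: you invert the restriction of $\mathbf{p}_{\widetilde{\mathcal{A}}}$ to the horizontal bundle (the paper defines $\mathbf{H}$ via this same restriction), combine it with the isomorphism $\nu:\widetilde{\mathcal{E}}\to\mathbf{V}\mathcal{M}$ and the decomposition (\ref{eq_DecompAE}) to identify $\mathbf{T}\mathcal{M}$ with $\widetilde{\mathcal{A}\times\mathcal{E}}$, and in (2) you take $\Upsilon=\mathbf{N}'-\mathbf{N}$ exactly as the paper does. Your additions — the short-exact-sequence framing, the explicit algebra $(\mathbf{N}+\Upsilon)^2=\operatorname{Id}$ via $\Upsilon^2=0$, $\mathbf{N}\Upsilon=-\Upsilon$, $\Upsilon\mathbf{N}=\Upsilon$, and the verification of boundedness of the fibrewise inverse through the local Christoffel form (\ref{eq_Christoffel}) — only supply details the paper's terse proof leaves implicit, and are welcome given the absence of an open mapping theorem in the convenient setting.
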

According to this Proposition we introduce:
\begin{definition}
\label{D_SplitProlongation}
A prolongation of $\mathcal{A}$ over ${\bf p}:\mathcal{M}\to M$ is called a split prolongation\index{split prolongation} if there exists a Withney decomposition ${\bf T}\mathcal{M}={\bf K}\mathcal{M}\oplus {\bf V}\mathcal{M}$.
\end{definition}

The following result is a clear consequence of Proposition \ref{P_IsoConnection}.

\begin{corollary}
\label{C_SplitProlongation}
Let ${\bf T}\mathcal{M}$ be a split prolongation. Then there exists a connection on  ${\bf T}\mathcal{M}$  and ${\bf T}\mathcal{M}$ is isomorphic to $\widetilde{\mathcal{A}\times\mathcal{E}}$.
\end{corollary}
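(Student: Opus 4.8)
The plan is to read off the two conclusions directly from the definitions and from Proposition~\ref{P_IsoConnection}, after observing that the defining data of a split prolongation is literally that of a connection. Indeed, a split prolongation carries, by Definition~\ref{D_SplitProlongation}, a Whitney decomposition $\mathbf{T}\mathcal{M}=\mathbf{K}\mathcal{M}\oplus\mathbf{V}\mathcal{M}$; setting $\mathbf{H}\mathcal{M}:=\mathbf{K}\mathcal{M}$ this is exactly the datum demanded by Definition~\ref{D_NLConnection}. Equivalently, one may describe the corresponding connection by the endomorphism $\mathbf{N}$ of $\mathbf{T}\mathcal{M}$ acting as $+\operatorname{Id}$ on $\mathbf{K}\mathcal{M}$ and as $-\operatorname{Id}$ on $\mathbf{V}\mathcal{M}$, which satisfies $\mathbf{N}^2=\operatorname{Id}$, $\mathbf{V}\mathcal{M}=\ker(\operatorname{Id}+\mathbf{N})$ and $\mathbf{K}\mathcal{M}=\ker(\operatorname{Id}-\mathbf{N})$. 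This already establishes the first assertion, namely the existence of a connection on $\mathbf{T}\mathcal{M}$.

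For the isomorphism I would invoke Proposition~\ref{P_IsoConnection}(1), whose final clause states that whenever a connection exists the total space $\mathbf{T}\mathcal{M}$ is isomorphic to $\widetilde{\mathcal{A}\times\mathcal{E}}$. To make the hypothesis of that proposition transparent it is worth exhibiting the bundle morphism it requires: by Theorem~\ref{T_Prolongation}(2) the projection $\mathbf{p}_{\widetilde{\mathcal{A}}}$ is surjective with kernel $\mathbf{V}\mathcal{M}=\ker\mathbf{p}_{\widetilde{\mathcal{A}}}$, so its restriction $\mathbf{p}_{\widetilde{\mathcal{A}}}|_{\mathbf{K}\mathcal{M}}\colon\mathbf{K}\mathcal{M}\to\widetilde{\mathcal{A}}$ is a convenient bundle isomorphism, the inverse being bounded since the projection onto $\mathbf{K}\mathcal{M}$ along $\mathbf{V}\mathcal{M}$ is part of the Whitney splitting. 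Its inverse is then a bundle morphism $\mathbf{H}\colon\widetilde{\mathcal{A}}\to\mathbf{T}\mathcal{M}$ with $\mathbf{H}(\widetilde{\mathcal{A}})=\mathbf{K}\mathcal{M}$ and $\mathbf{T}\mathcal{M}=\mathbf{H}(\widetilde{\mathcal{A}})\oplus\mathbf{V}\mathcal{M}$, which is precisely the object appearing in Proposition~\ref{P_IsoConnection}(1). Applying that proposition then gives $\mathbf{T}\mathcal{M}\cong\widetilde{\mathcal{A}\times\mathcal{E}}$ and finishes the argument.

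I do not expect a genuine obstacle here, since all the substantive work is already contained in Proposition~\ref{P_IsoConnection} and Theorem~\ref{T_Prolongation}; the corollary is essentially an unwinding of Definitions~\ref{D_NLConnection} and~\ref{D_SplitProlongation}, which coincide up to the renaming $\mathbf{K}\mathcal{M}\leftrightarrow\mathbf{H}\mathcal{M}$, followed by a single citation. The only point deserving a moment of care is the identification $\mathbf{V}\mathcal{M}=\ker\mathbf{p}_{\widetilde{\mathcal{A}}}$ together with the surjectivity of $\mathbf{p}_{\widetilde{\mathcal{A}}}$ from Theorem~\ref{T_Prolongation}(2), which is what guarantees that the abstract complement $\mathbf{K}\mathcal{M}$ really is the image of a genuine bundle morphism out of $\widetilde{\mathcal{A}}$ in the exact form required to trigger Proposition~\ref{P_IsoConnection}(1).
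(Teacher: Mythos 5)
Your proof is correct and takes essentially the same route as the paper, which states Corollary \ref{C_SplitProlongation} without separate proof as "a clear consequence of Proposition \ref{P_IsoConnection}": your unwinding --- observing that Definitions \ref{D_SplitProlongation} and \ref{D_NLConnection} prescribe the same datum up to renaming $\mathbf{K}\mathcal{M}\leftrightarrow\mathbf{H}\mathcal{M}$, and then constructing $\mathbf{H}$ as the inverse of the restriction of $\mathbf{p}_{\widetilde{\mathcal{A}}}$ to $\mathbf{K}\mathcal{M}$ via Theorem \ref{T_Prolongation}(2) --- reproduces exactly the mechanism inside the paper's own proof of Proposition \ref{P_IsoConnection}(1). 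No gaps; you even tacitly correct the paper's typo by writing $\mathbf{V}\mathcal{M}=\ker(\operatorname{Id}+\mathbf{N})$ where the paper misprints $\mathbf{T}\mathcal{M}=\ker(\operatorname{Id}+\mathbf{N})$.
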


\begin{proof}[Proof of Proposition \ref{P_IsoConnection}]${}$\\
(1) Assume that we have a  connection $\mathbf{N}$ on $\mathbf{T}\mathcal{M}$ and let  $\mathbf{T}\mathcal{M}=\mathbf{H}\mathcal{M}\oplus \mathbf{V}\mathcal{M}$ be the associated Whitney decomposition.\\
Let $\mathbf{H}$ be the restriction of $\mathbf{p}_{\widetilde{\mathcal{A}}}$ to $\mathbf{H}\mathcal{M}$. Since $\mathbf{V}\mathcal{M}$ is the kernel of  the surjective morphism $\mathbf{p}_{\widetilde{\mathcal{A}}}$, it follows that $\mathbf{H}$ is an isomorphism.  Since we have an isomorphism $\nu:\widetilde{\mathcal{E}}\rightarrow \mathbf{V}\mathcal{M}$, according to  (\ref{eq_DecompAE}), it follows easily that $\widetilde{\mathcal{A}\times\mathcal{E}}$ is isomorphic to $\mathbf{T}\mathcal{M}$. The converse is clear.\\
			
(2) At first, if $\Upsilon$ is semi basic, then $\operatorname{ker} (\operatorname{Id}+\mathbf{N}+\Upsilon)=\mathbf{V}\mathcal{M}$ and clearly the range of $\operatorname{Id}+\mathbf{N}+\Upsilon$ is a supplemented subbundle of $\mathbf{V}\mathcal{M}$.\\
On the one hand, if $\mathbf{N}'$ is a connection, we set $\Upsilon=\mathbf{N}'-\mathbf{N}$. Then $\Upsilon(\mathbf{Z})=0$, for all local vertical sections. On the other hand  $\Upsilon(\mathbf{X})=(\operatorname{Id} +\mathbf{N}')(\mathbf{X})-(\operatorname{Id} +\mathbf{N}(\mathbf{X}) $ which belongs to $\mathbf{V}\mathcal{M}$.\\
\end{proof}	

A sufficient condition for the existence of a connection on $\mathbf{T}\mathcal{M}$ is given by the following result:
\begin{theorem}
\label{T_CShconnection}
Assume that there exists a linear connection on the bundle $\mathbf{p}:\mathcal{E}\rightarrow M$. Then there exists a  connection $\mathbf{N}$ on $\mathbf{T}\mathcal{M}$.
\end{theorem}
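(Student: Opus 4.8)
The plan is to deduce the statement from Proposition \ref{P_IsoConnection}(1): it is enough to produce a convenient bundle morphism $\mathbf{H}:\widetilde{\mathcal{A}}\to\mathbf{T}\mathcal{M}$ over $\operatorname{Id}_\mathcal{M}$ with $\mathbf{T}\mathcal{M}=\mathbf{H}(\widetilde{\mathcal{A}})\oplus\mathbf{V}\mathcal{M}$, and the connection $\mathbf{N}$ will then be the one attached to the resulting Whitney decomposition. So the whole argument reduces to constructing a single horizontal lift of $\widetilde{\mathcal{A}}$ into the prolongation, built out of the given linear connection on $\mathcal{E}$.

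First I would unpack the hypothesis. A linear connection on $\mathbf{p}:\mathcal{E}\to M$ is a splitting $T\mathcal{E}=H\mathcal{E}\oplus V\mathcal{E}$, equivalently a smooth horizontal lift assigning to $e\in\mathcal{E}_x$ and $v\in T_xM$ a unique $\ell_e(v)\in H_e\mathcal{E}$ with $T\mathbf{p}(\ell_e(v))=v$, linear in $v$ and smooth in $(e,v)$. Because $\mathcal{M}$ is an \emph{open} submanifold of $\mathcal{E}$, one has $T_m\mathcal{M}=T_m\mathcal{E}$ for every $m\in\mathcal{M}$, so the lift restricts to a map $\ell_m:T_{\mathbf{p}(m)}M\to T_m\mathcal{M}$. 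In the local coordinates of Notations \ref{N_locM} it reads $(m,v)\mapsto(m,v,-\Gamma_x(v,e))$, where $\Gamma$ is the Christoffel field of the linear connection.

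Then I would set, for $m\in\mathcal{M}_x$ and $a\in\mathcal{A}_x$,
\[
\mathbf{H}(m,a)=\bigl(m,a,\ell_m(\rho(a))\bigr).
\]
Since $T\mathbf{p}(\ell_m(\rho(a)))=\rho(a)$, the pair $(a,\ell_m(\rho(a)))$ meets the defining condition $\rho(a)=T\mathbf{p}(\mu)$ of $\mathbf{T}^{\mathcal{A}}_m\mathcal{M}$ (cf. (\ref{eq_aX})), so $\mathbf{H}$ lands in $\mathbf{T}\mathcal{M}$; it is fibrewise linear because $\ell_m$ and $\rho$ are, and smooth in $m$. By construction $\mathbf{p}_{\widetilde{\mathcal{A}}}\circ\mathbf{H}=\operatorname{Id}_{\widetilde{\mathcal{A}}}$, so $\mathbf{H}$ is a right inverse of the surjection $\mathbf{p}_{\widetilde{\mathcal{A}}}$ of Theorem \ref{T_Prolongation}(2). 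For the direct sum I would write any $(m,a,\mu)\in\mathbf{T}^{\mathcal{A}}_m\mathcal{M}$ as $\mathbf{H}(m,a)+\bigl(m,0,\mu-\ell_m(\rho(a))\bigr)$, where the second term lies in $\ker\mathbf{p}_{\widetilde{\mathcal{A}}}=\mathbf{V}\mathcal{M}$, its $\widetilde{\mathcal{A}}$-component being zero and $T\mathbf{p}(\mu-\ell_m(\rho(a)))=0$; since $\mathbf{p}_{\widetilde{\mathcal{A}}}$ is injective on $\mathbf{H}(\widetilde{\mathcal{A}})$, the sum is direct. Proposition \ref{P_IsoConnection}(1) then yields the connection $\mathbf{N}$, whose local Christoffel symbol $\digamma$ in (\ref{eq_Christoffel}) is obtained by composing $\Gamma$ with $\rho$, namely $\digamma(a)=\Gamma_x(\rho_x(a),e)$.

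The argument is essentially routine, and the only delicate point — the place where I would expect the main (if modest) obstacle — is a convenient-calculus check: one must ensure that $\mathbf{H}$ is a genuine morphism of convenient bundles, not merely a fibrewise linear gluing. This follows because the horizontal lift of a linear connection is a smooth (hence bounded) bundle map and $\rho$ is a bounded morphism, so their composition is smooth and fibrewise bounded-linear in the sense of \cite{KrMi}. Note that no transversality or closedness hypothesis on $\rho$ is needed here, since complementarity is forced by the identity $\mathbf{V}\mathcal{M}=\ker\mathbf{p}_{\widetilde{\mathcal{A}}}$.
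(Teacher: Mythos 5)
Your proof is correct and takes essentially the same approach as the paper: the paper likewise uses the linear connection's splitting of $T\mathcal{E}$ (through the isomorphism $\kappa:T\mathcal{E}\to \mathbf{p}^{\ast}TM\oplus\mathbf{p}^{\ast}\mathcal{E}$ of \cite{AgSu}) to define $\widetilde{\Psi}(m,a,z)=(m,a,\rho(a),z)$ — whose restriction to $z=0$ is precisely your horizontal lift $\mathbf{H}(m,a)=\bigl(m,a,\ell_m(\rho(a))\bigr)$ — and concludes via Proposition \ref{P_IsoConnection}. The only difference is presentational: you obtain the Whitney sum $\mathbf{T}\mathcal{M}=\mathbf{H}(\widetilde{\mathcal{A}})\oplus\mathbf{V}\mathcal{M}$ directly from the section property $\mathbf{p}_{\widetilde{\mathcal{A}}}\circ\mathbf{H}=\operatorname{Id}_{\widetilde{\mathcal{A}}}$, while the paper checks in connection-adapted charts that $\widetilde{\Psi}$ is a convenient bundle isomorphism, thereby also recording the identification $\mathbf{T}\mathcal{M}\cong\widetilde{\mathcal{A}\times\mathcal{E}}$.
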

		
\begin{proof}
 let  $\mathbf{p}^{\ast}TM$ (resp. $\mathbf{p}^{\ast}\mathcal{E}$) is the pull-back over $\mathcal{M}$ of $TM\rightarrow M$ (resp. $\mathcal{E}\rightarrow M$).\\
 If  there exists a linear connection $N$ on the bundle $\mathcal{E}\rightarrow M$,  there exists a convenient  isomorphism bundle
\[
\kappa=(\kappa_1,\kappa_2): T\mathcal{E}\rightarrow \mathbf{p}^{\ast}TM\oplus  \mathbf{p}^{\ast}\mathcal{E}.\]
(cf.  Theorem 3.1 \cite{AgSu} in Banach setting,  \cite{BCP}, Chapter 6 in convenient setting).
Therefore, for an open  fibred submanifold $\mathcal{M}$ of $\mathcal{E}$, by restriction,  we obtain an isomorphism (again denoted $\kappa$).
\[
\kappa:T\mathcal{M}\rightarrow \mathbf{p}^{\ast}TM \oplus  \mathbf{p}^{\ast}\mathcal{E}
\]

Without loss of generality, we can identify $T\mathcal{M}$ with $\mathbf{p}^{\ast}TM \oplus  \mathbf{p}^{\ast}\mathcal{E}$. For $m=(x,e)$, the fibre $\mathbf{T}_m\mathcal{M}$ is then
\[
\mathbf{T}_m\mathcal{M}=\{(a,\mu)\in \mathcal{A}_x\times T_xM\times\mathbf{V}_m\mathcal{M}\;\;: \rho(a)=T\mathbf{p}(\mu)\}.
\]
But, under our identification of $T\mathcal{M}$ with $\mathbf{p}^{\ast}TM\oplus  \mathbf{p}^{\ast}\mathcal{E}$, if $m=(x,e)$, this implies that $\mu\in T_m\mathcal{M}$, can be written as a pair   $(v,z)\in T_xM\times \mathcal{E}_x$ and so we replace the condition  $\rho(a)=T\mathbf{p}(\mu)$ by $\rho_x(a)=v$.\\
Recall that, in the one hand, we have a chart (cf. proof of Theorem \ref{T_Prolongation}):
\[
\mathbf{T}\Phi: \mathbf{T}\mathcal{M}_U\rightarrow \Phi(\mathcal{M}_U)\times \mathbb{A}\times \mathbb{E}.
\]
The value of $\mathbf{T}\Phi(m, a,\mu)$ can be written $(\phi(x),\Phi_x(e),\tau_x(a),T_m\Phi(\mu))$  (value denoted $(\mathsf{x,e,a,z})$)  with $T_m\Psi (T\mathbf{p}(\mu))=T_x\phi(\rho_x(a))$. But, under our assumption,
we have $T_m\Phi(\mu))=(T_x\phi(\rho_x(a)),\mathsf{z})\in \{(\mathsf{x,e})\}\times \mathbb{M}\times \mathbb{E}$ and so we obtain
\begin{eqnarray}
\label{eq_TPhiassump}
\mathbf{T}\Phi(m, a,\mu)\equiv (\mathsf{x,e,a,z}).
\end{eqnarray}
On the other hand, we have a trivialization  $\widetilde{\tau\times\Phi}$ from $\widetilde{\mathcal{A}\times\mathcal{E}}_{\mathcal{M}_U}$ to $\Phi(\mathcal{M}_U)\times\mathbb{A}\times\mathbb{E}$ over $\Phi$.  In fact, we have $(\widetilde{\tau\times\Phi})(x,e,a,z)=(\phi(x),\Phi_x(e),\tau_x(a),\Phi_x(z))$. \\
According to our assumption,  the map $\widetilde{\Psi}:\widetilde{\mathcal{A}\times}\mathcal{E}\rightarrow \mathbf{T}\mathcal{M}$ is given by
\[
\widetilde{\Psi}(m,a,z)=(m,a,\rho(a),z)
\]
is well defined. In local coordinates, we have
\[
\widetilde{\Psi}(e,x,a,\rho(a),z)\equiv(\mathsf{x,e,a,z}).
\]
Thus $\widetilde{\Psi}$ is the identity in local coordinates and so is a local bundle isomorphism. To complete the proof, we only have to show that under our assumption $\widetilde{\Psi}$ is a convenient bundle morphism.\\
By analogy with the notations used in the proof of Theorem \ref{T_Prolongation} (1),  let  $(U',\phi')$ be  another chart on $M$ and consider all the corresponding  trivializations $T\phi'$, $\Phi'$, and  $\Phi'$. We set  $(\mathsf{x',e',v',z'})=T\Phi'(\mathfrak{e},\mathfrak{z})$ (i.e. $(\mathfrak{e},\mathfrak{z})\equiv(\mathsf{x',e',v',z'}) $ following our convention), in these new coordinates, we have $\widetilde{\Psi}(\mathfrak{e},\mathfrak{z})  \equiv(\mathsf{x',e',a',z'})$. Assume that $U\cap U'\not=\emptyset$.  For the change of coordinates,  we set $\theta_\mathsf{x}=\phi'\circ\phi^{-1}(\mathsf{x})$, and each associated  transition map gives rise to a smooth field of isomorphisms of convenient spaces as follows:
\begin{eqnarray*}
\begin{aligned}
&T_\mathsf{x}\theta(\mathsf{v})=T_\mathsf{x}(\phi'\circ\phi^{-1})(\mathsf{v})\\
&\mathfrak{T}_\mathsf{x}(\mathsf{a})=\left(\tau' \circ\tau ^{-1}\right)_\mathsf{x}(\mathsf{a})\\
&\Theta_\mathsf{x}(\mathsf{e})=(\Phi'\circ \Phi^{-1})_\mathsf{x}(\mathsf{e}).
\end{aligned}
\end{eqnarray*}

Thus, under our assumption, according to (\ref{eq_TPhiassump}), in fact we have
\[
\mathbf{T}\Phi'\circ \mathbf{T}\Phi^{-1}(\mathsf{x,e,a,z})=(\theta(\mathsf{x}),\Theta_\mathsf{x}(\mathsf{e}), \mathfrak{T}_\mathsf{x}(\mathsf{a}), \Theta_\mathsf{x}(\mathsf{z})).
\]
Now with the previous notations we have
\[
\left(\widetilde{\tau'\times\Phi'}\right)\circ \left(\widetilde{\tau\times\Phi}\right)^{-1}(\mathsf{x,e,a,z})=(\theta(\mathsf{x}),\Theta_\mathsf{x}(\mathsf{e}), \mathfrak{T}_\mathsf{x}(\mathsf{a}), \Theta_\mathsf{x}(\mathsf{z}))=\mathbf{T}\Phi'\circ \mathbf{T}\Phi^{-1}(\mathsf{x,e,a,z}).
\]
Since, in such local coordinates, $\widetilde{\Psi}$ is the identity map, so under our assumption $\widetilde{\Psi} $ is a convenient bundle isomorphism.
\end{proof}

\subsection{Prolongation of the Lie bracket}
\label{___ProlongationOfTheLieBracket}
In the finite dimensional framework,   a Lie bracket for smooth sections of   $\hat{\bf p}:{\bf T}\mathcal{M}\to \mathcal{M}$ is well defined. Unfortunately, we will see that it is  not true in this general convenient context.\\

According to the notations \ref{N_locM}, for each open set $U$ of $M$, we denote by $\Gamma(\textbf{T}\mathcal{M}_U)$,   $\Gamma(\textbf{V}\mathcal{M}_U)$ and $\Gamma(\widetilde{\mathcal{A}}_U)$ the $C^\infty(\mathcal{M}_{U})$-module of sections of $\textbf{T}\mathcal{M}_U$, ${\bf V}\mathcal{M}_U$ and $\widetilde{\mathcal{A}}_U$ respectively. We also denote $\Gamma(\mathcal{A}_U)$   the $C^\infty({U})$-module of sections  $\pi:\mathcal{A}_U\to U$.
\begin{definition}
\label{D_Mproj}
Let $U$ be an open subset of $M$.
A section  $\mathbf{X}$ in $\Gamma({\bf T}\mathcal{M}_U)$ is called projectable\index{projectable section}  if  there exists $\mathfrak{a}\in\Gamma({\mathcal{ A}}_U)$  such that
\[
{\bf p}_{\mathcal{A}}\circ\mathbf{ X}=\mathfrak{a}.
\]
\end{definition}

Therefore  $\mathbf{ X} $  is projectable if and only if there exists a vector field  $X$ on $\mathcal{M}_U$ and $\mathfrak{a}\in\Gamma(\mathcal{ A}_U)$ such that  (see  Theorem \ref{T_Prolongation}) 
\[
\mathbf{ X}=(\mathfrak{a}\circ{\bf p},X) \textrm{ with   } T{\bf p}(X)=\rho\circ \mathfrak{a}.
\]

\textbf{Assume now that  $(\mathcal {A},\pi,M,\rho,[.,.]_{\mathcal {A}})$ is a convenient Lie algebroid}.
\smallskip

Let  $\mathbf{X}_i=(\mathfrak{a}_i\circ{\bf p},X_i)$, $i \in \{ 1,2 \}$ be two projectable sections defined on $\mathcal{M}_U$. We set
\begin{eqnarray}
\label{eq_projectTMbracket}
[\mathbf{X}_1,\mathbf{X}_2]_{{\bf T}\mathcal{M}}=([\mathfrak{a}_1,\mathfrak{a}_2]_{\mathcal{A}}\circ {\bf p},[X_1,X_2])
\end{eqnarray}
Since $\rho([\mathfrak{a}_1,\mathfrak{a}_2]_{\mathcal{A}})=[\rho(\mathfrak{a}_1),\rho(\mathfrak{a}_2)]$ and $T{\bf p} \left( [X_1,X_2] \right) =[T{\bf p}(X_1),T{\bf p}(X_2)]$, it follows that  $[\mathbf{X}_1,\mathbf{X}_2]_{{\bf T}\mathcal{M}}$ is a projectable section which is well defined. Moreover, we have
\begin{eqnarray}\label{eq_HatRhoMorphism}
\hat{\rho} \left( [\mathbf{X}_1,\mathbf{X}_2]_{{\bf T}\mathcal{M}} \right) =[\hat{\rho}(\mathbf{X}_1),\hat{\rho}(\mathbf{X}_2)].
\end{eqnarray}

\begin{comments}
\label{Com_PartialBracket}
Now, \textbf{in finite dimension}, it is well known that  the module of sections of $\widetilde{\mathcal{A}}_U\to \mathcal{M}_U$ is  the $C^\infty(\mathcal{M}_{U})$-module generated by the set of sections $\mathfrak{a}\circ {\bf p}$ where $\mathfrak{a}$ is any section of $\mathcal{A}_U\to U$.
 Therefore, according to Theorem \ref{T_Prolongation}, the module $\Gamma({\bf T}\mathcal{M}_U)$ is generated, as $C^\infty(\mathcal{M}_{U})$-module, by the set of all projectable sections of $\Gamma({\bf T}\mathcal{M}_U)$. This result is essentially a consequence of the fact  that, in the local context used in the proof of Theorem \ref{T_Prolongation}, over such a chart domain $U$, the bundle $\widetilde{\mathcal{A}}_U$ is a finite dimensional bundle and so the module of sections of $\widetilde{\mathcal{A}}_U$ over $\mathcal{M}_U$ is finitely generated as $C^\infty(\mathcal{M}_U)$. Thus, if $\mathcal{A}$ is a finite rank bundle over $M$, then the module $\Gamma({\bf T}\mathcal{M}_U)$ is generated, as $C^\infty(\mathcal{M}_{U})$-module, by the set of all projectable sections of $\Gamma({\bf T}\mathcal{M}_U)$.\\
 \textbf{Unfortunately, this is no longer  true in the convenient context and even in the Banach setting in general}.\\
Note that, under some type of approximation properties of $\mathbb{A}$, we can show that the module generated by the set of all projectable sections of $\Gamma(\mathcal{A}_U)$  as $C^\infty(\mathcal{M}_{U})$-module, is dense in  $\Gamma(\widetilde{\mathcal{A}}_U)$  as a convenient space. In this case,  the $C^\infty(\mathcal{M}_{U})$-module, generated by the set of all projectable sections of $\Gamma({\bf T}\mathcal{M}_U)$ will be dense in $\Gamma({\bf T}\mathcal{M}_U)$ (as a convenient space). We could hope that, in this context, the Lie bracket $[.,.]_{\mathbf{T}\mathcal{M}_U}$ can be extended to  $\Gamma({\bf T}\mathcal{M}_U)$.\\
\end{comments}

\begin{definition}
We denote by $\mathfrak{P}({\bf T}\mathcal{M}_U)$ the $C^\infty(\mathcal{M}_{U})$-module $\Gamma({\bf T}\mathcal{M}_U)$   generated by the set of projectable sections defined on $U$ in the $C^\infty(\mathcal{M}_{U})$-module.
\end{definition}
Each module  $\mathfrak{P}({\bf T}\mathcal{M}_U)$ has the following properties:
\begin{lemma}
\label{L_extbracket}
${}$
\begin{enumerate}
\item
For any open subset $U$ in $M$,  there exists a well defined Lie bracket $[.,.]_{{\bf T}\mathcal{M}_U}$ on $\mathfrak{P}({\bf T}\mathcal{M}_U)$ which  satisfies the assumption of Definition \ref{D_AlmostLieBracketOnAnAnchoredBundle}  and whose restriction to projectable sections is given by the relation (\ref{eq_projectTMbracket})
\item
For each $x\in M$, there exists a chart domain $U$ around $x$ in $M$ such that ${\bf T}\mathcal{M}_U$ is trivializable over $\mathcal{M}_U$ and for each $(m,v)\in {\bf T}_m\mathcal{M}$ where $m=(x,e)\in \mathcal{M}_U$, there exists a projectable section ${\bf X}$ defined on $\mathcal{M}_U$ such that ${\bf X}(m)=(a,v)$.
\item
Assume that we have a Whitney  decomposition ${\bf T}\mathcal{M}={\bf K}\mathcal{M}\oplus{\bf V}\mathcal{M}$ and let $p_{\bf K}$ be the associated projection on ${\bf K}\mathcal{M}$. Then, for any section ${\bf X}\in \mathfrak{P}({\bf T}\mathcal{M}_U)$, the induced section ${\bf X}^K=p_{\bf K}\circ {\bf X}$ belongs to $\mathfrak{P} \left( {\bf K}\mathcal{M}_U \right) =\Gamma \left( {\bf K}\mathcal{M}_U \right) \cap  \mathfrak{P} \left( {\bf T}\mathcal{M}_U \right) $. In particular, ${\bf X}^K$ is projectable if and only if ${\bf X}$ is so.
\end{enumerate}
\end{lemma}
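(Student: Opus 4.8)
The plan is to establish the three assertions in the order (2), (1), (3), since the pointwise generation statement (2) is exactly what drives the Jacobi identity in (1). For (2), fix $x\in M$ and, using Theorem \ref{T_Prolongation} and Remark \ref{R_chartTM}, choose a chart $(U,\phi)$ over which $\mathcal{A}_U$ and $\mathcal{M}_U$ are trivial, so that $\mathbf{T}\mathcal{M}_U\equiv\phi(U)\times\mathbb{O}\times\mathbb{A}\times\mathbb{E}$ and $\hat{\rho}(\mathsf{x,e,a,z})\equiv(\mathsf{x,e},\rho_{\mathsf{x}}(\mathsf{a}),\mathsf{z})$. Given $m=(x,e)$ and a fibre element written $(\mathsf{a}_0,\mathsf{z}_0)\in\mathbb{A}\times\mathbb{E}$, I take the constant section $\mathfrak{a}\equiv\mathsf{a}_0$ of $\mathcal{A}_U$ and the vector field $X$ on $\mathcal{M}_U$ with components $(\rho_{\mathsf{x}}(\mathsf{a}_0),\mathsf{z}_0)$; the anchor constraint $T\mathbf{p}(X)=\rho\circ\mathfrak{a}$ holds by construction, so $\mathbf{X}=(\mathfrak{a}\circ\mathbf{p},X)$ is projectable and $\mathbf{X}(m)=(\mathsf{a}_0,\mathsf{z}_0)$. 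Hence the values at $m$ of projectable sections exhaust the whole fibre $\mathbf{T}_m\mathcal{M}$.

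For (1) the idea is to define the bracket through a coordinate formula rather than directly by a Leibniz extension. In a trivializing chart I write a local section as $\mathsf{m}\mapsto(\mathsf{a}(\mathsf{m}),\mathsf{z}(\mathsf{m}))$ and set, mimicking (\ref{eq_loctrivct}),
\[
[\mathbf{X},\mathbf{W}](\mathsf{m}) \equiv \left(\mathsf{C}_{\mathsf{x}}(\mathsf{a},\mathsf{b}),0\right) + d_{\mathsf{m}}(\mathsf{b},\mathsf{w})\left(\rho_{\mathsf{x}}\mathsf{a},\mathsf{z}\right) - d_{\mathsf{m}}(\mathsf{a},\mathsf{z})\left(\rho_{\mathsf{x}}\mathsf{b},\mathsf{w}\right),
\]
that is, the almost Lie bracket attached to the anchor $\hat{\rho}$ and the structure field $\hat{\mathsf{C}}_{\mathsf{m}}((\mathsf{a},\mathsf{z}),(\mathsf{b},\mathsf{w}))=(\mathsf{C}_{\mathsf{x}}(\mathsf{a},\mathsf{b}),0)$, where $\mathsf{C}$ is the structure field of $[.,.]_{\mathcal{A}}$. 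A direct check shows that on projectable sections (where $\mathsf{a},\mathsf{b}$ depend only on $\mathsf{x}$) this reproduces (\ref{eq_projectTMbracket}). Being assembled from $\mathsf{C}$, $\rho$ and the differential, all bounded, it is $\mathbb{R}$-bilinear, skew-symmetric and depends only on the $1$-jets of its arguments, so it fulfils \textbf{(AL 1)} and \textbf{(AL 2)}; the boundedness is checked exactly as at the end of the proof of Proposition \ref{P_Liekform}. The decisive point, and the one genuinely delicate step, is well-definedness: the formula refers only to the sections themselves and so is manifestly insensitive to the (non-unique) way of writing a member of $\mathfrak{P}(\mathbf{T}\mathcal{M}_U)$ as a $C^\infty(\mathcal{M}_U)$-combination of projectable sections, while on $\mathfrak{P}(\mathbf{T}\mathcal{M}_U)$ it is determined from its intrinsic values (\ref{eq_projectTMbracket}) on projectable sections by the Leibniz rule, so the coordinate expressions in overlapping charts agree on $\mathfrak{P}(\mathbf{T}\mathcal{M}_U)$ and glue to a sheaf bracket. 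It is precisely this confinement to the span of projectable sections that fails for the full module $\Gamma(\mathbf{T}\mathcal{M}_U)$ outside finite dimension.

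Still within (1), expanding $[\sum_i f_i\mathbf{X}_i,\sum_j g_j\mathbf{W}_j]$ by bilinearity and \textbf{(AL 1)} produces a $C^\infty(\mathcal{M}_U)$-combination of the projectable sections $[\mathbf{X}_i,\mathbf{W}_j]_{\mathbf{T}\mathcal{M}}$, $\mathbf{X}_i$ and $\mathbf{W}_j$, so the bracket maps $\mathfrak{P}(\mathbf{T}\mathcal{M}_U)^2$ into $\mathfrak{P}(\mathbf{T}\mathcal{M}_U)$. For the Jacobi identity I first extend the $\hat{\rho}$-compatibility (\ref{eq_HatRhoMorphism}) from projectable sections to all of $\mathfrak{P}(\mathbf{T}\mathcal{M}_U)$ by the same Leibniz expansion; then the adaptation of Proposition \ref{P_EquivalenceMorphismJEtensor}(3) to this partial setting shows the Jacobiator of $[.,.]_{\mathbf{T}\mathcal{M}_U}$ is $C^\infty(\mathcal{M}_U)$-trilinear, hence tensorial. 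On a projectable triple it vanishes, since each component obeys Jacobi (the $\mathcal{A}$-slot by the Lie algebroid axiom for $[.,.]_{\mathcal{A}}$, the $T\mathcal{M}$-slot by Jacobi for vector fields); being tensorial, its value at $m$ depends only on the values at $m$ of its arguments, and by (2) these fill $\mathbf{T}_m\mathcal{M}$, so it vanishes identically.

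Finally, for (3), since $\mathbf{V}\mathcal{M}=\ker\mathbf{p}_{\widetilde{\mathcal{A}}}$ the projection $p_{\mathbf{K}}=\operatorname{Id}-p_{\mathbf{V}}$ satisfies $\mathbf{p}_{\widetilde{\mathcal{A}}}\circ p_{\mathbf{K}}=\mathbf{p}_{\widetilde{\mathcal{A}}}$, so $p_{\mathbf{K}}$ leaves the $\widetilde{\mathcal{A}}$-component of a section unchanged and in particular $\mathbf{p}_{\mathcal{A}}\circ(p_{\mathbf{K}}\circ\mathbf{X})=\mathbf{p}_{\mathcal{A}}\circ\mathbf{X}$. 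As projectability is exactly the condition that this $\mathcal{A}$-component factor through $\mathbf{p}$ (Definition \ref{D_Mproj}), $p_{\mathbf{K}}\circ\mathbf{X}$ is projectable if and only if $\mathbf{X}$ is, which yields the last assertion. Moreover $p_{\mathbf{K}}$ is a bundle morphism, hence $C^\infty(\mathcal{M}_U)$-linear on sections, so writing $\mathbf{X}=\sum_i f_i\mathbf{X}_i$ with $\mathbf{X}_i$ projectable gives $\mathbf{X}^K=\sum_i f_i\,(p_{\mathbf{K}}\circ\mathbf{X}_i)$, a $C^\infty(\mathcal{M}_U)$-combination of projectable sections valued in $\mathbf{K}\mathcal{M}$; therefore $\mathbf{X}^K\in\Gamma(\mathbf{K}\mathcal{M}_U)\cap\mathfrak{P}(\mathbf{T}\mathcal{M}_U)=\mathfrak{P}(\mathbf{K}\mathcal{M}_U)$, completing the proof.
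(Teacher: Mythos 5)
Your proposal is correct and takes essentially the same route as the paper's proof: the bracket is the Leibniz extension of (\ref{eq_projectTMbracket}) to the $C^\infty(\mathcal{M}_U)$-span of projectable sections (your chart formula is precisely the paper's own local expression (\ref{eq_BracketQuasiprojectable}), which indeed makes independence of the decomposition manifest), part (2) uses the same constant sections in a trivializing chart, and part (3) rests on the same fact that $\mathbf{V}\mathcal{M}=\ker \mathbf{p}_{\widetilde{\mathcal{A}}}$, so that $p_{\mathbf{K}}$ preserves the $\widetilde{\mathcal{A}}$-component. The only differences are expository: you prove (2) before (1) and spell out the tensoriality of the Jacobiator (the adaptation of Proposition \ref{P_EquivalenceMorphismJEtensor}) that the paper leaves implicit, and you verify $\mathbf{X}^K\in\mathfrak{P}(\mathbf{K}\mathcal{M}_U)$ via the $C^\infty(\mathcal{M}_U)$-linearity of $p_{\mathbf{K}}$, a membership the paper's proof glosses over.
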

\begin{proof}
(1) First of all, using the Leibniz formula, if $\mathbf{X}_1$ and $\mathbf{X}_2$ are projectable sections defined on $\mathcal{M}_U$, we have
\[
[\mathbf{X}_1,f\mathbf{X}_2]_{{\bf T}\mathcal{M}_U}=df(\hat{\rho}(\mathbf{X}_1))\mathbf{X}_2+ f[\mathbf{X}_1,\mathbf{X}_2]_{{\bf T}\mathcal{M}_U}
\]
for any $f\in C^\infty(\mathcal{M})$. Now any local section $\mathbf{X}$ of $\mathfrak{P}({\bf T}\mathcal{M}_U)$ is a finite linear functional sum
\[
\mathbf{X}=f_1 \mathbf{X}_2+\cdots+ f_k\mathbf{X}_k
\]
where, for $i \in \{ 1\dots,k \}$, each $\mathbf{X}_i$ is projectable and $f_i$ is a local smooth function on $\mathcal{M}_U$. Therefore, such a decomposition allows to define the bracket
$[\mathbf{Y},\mathbf{X}]_{{\bf T}\mathcal{M}_U}$ for all projectable sections $\mathbf{Y}$ defined on the same open set as $\mathbf{X}$. Note that, from (\ref{eq_projectTMbracket}) and the Leibniz formula, the value $[\mathbf{Y},\mathbf{X}]_{{\bf T}\mathcal{M}_U}(m)$ only depends on the $1$-jet of $\mathbf{Y}$ and $\mathbf{X}$ at point $m$ and so the value of $[\mathbf{Y}_,\mathbf{X}]_{{\bf T}\mathcal{M}_U}(m)$ is well defined. Since  such a value is independent on the expression of $\mathbf{X}$, by similar arguments, we can define the bracket $[\mathbf{X}',\mathbf{X}]_{{\bf T}\mathcal{M}_U}(m)$, for any other local section  $\mathbf{X}'$ of  $\mathfrak{P}({\bf T}\mathcal{M}_U)$. Now since, by assumption, $[.,.]_\mathcal{A}$ and the Lie bracket of vector fields satisfies the assumption of Definition \ref{D_AlmostLieBracketOnAnAnchoredBundle}, the restriction of  $[.,.]_{{\bf T}\mathcal{M}_U}$ to projectable sections satisfies also the assumption of Definition \ref{D_AlmostLieBracketOnAnAnchoredBundle} and so is its extension to sections of $\mathfrak{P}({\bf T}\mathcal{M}_U)$ for any open set $U$ of $M$ is an almost Lie bracket. \\
In this context, the   jacobiator\index{jacobiator} on ${{\bf T}\mathcal{M}_U}$ is defined by\\
$J_{{\bf T}\mathcal{M}_U}(\mathbf{X}_1,\mathbf{X}_2,\mathbf{X}_3)\hfill{}$
${}\hfill =[[\mathbf{X}_1,\mathbf{X}_2]_{{\bf T}\mathcal{M}_U},\mathbf{X}_3]_{{\bf T}\mathcal{M}_U} +[[\mathbf{X}_2,\mathbf{X}_3]_{{\bf T}\mathcal{M}_U},\mathbf{X}_1]_{{\bf T}\mathcal{M}_U}+[ [\mathbf{X}_3,\mathbf{X}_1]_{{\bf T}\mathcal{M}_U},\mathbf{X}_2]_{{\bf T}\mathcal{M}_U}$\\
But according to relation (\ref{eq_HatRhoMorphism}) for projectable sections, using the Leibnitz property, it is easy to see that, for all  $\mathbf{X}_1,\mathbf{X}_2$ in $\mathfrak{P}(\mathbf{T}\mathcal{M}_U)$
\[
\hat{\rho}([\mathbf{X}_1,\mathbf{X}_2]_{{\bf T}\mathcal{M}})=[\hat{\rho}(\mathbf{X}_1),\hat{\rho}(\mathbf{X}_2)].
\]
On the other hand, from(\ref{eq_projectTMbracket}),  for projectable sections $\mathbf{X}_i$ , $i \in \{1,2,3\}$, it follows that  
\[
J_{{\bf T}\mathcal{M}_U}(\mathbf{X}_1,\mathbf{X}_2,\mathbf{X}_3)=0.
\]
Therefore, according to these properties, it follows that $J_{{\bf T}\mathcal{M}_U}$ vanishes identically on $\mathfrak{P}({\bf T}\mathcal{M}_U)$ , which ends the proof of (1).

(2) Choose $x_0\in M$. According to the proof of Theorem \ref{T_Prolongation}, there exists a chart domain $U$ around $x$ such that
\begin{description}
\item
$\mathcal{M}_U\equiv U\times\mathbb{O}$
\item
$\mathcal{A}_U\equiv U\times\mathbb{A}$
\item
$T\mathcal{M}_U\equiv U\times\mathbb{O}\times\mathbb{M}\times\mathbb{E}$
\item
$\widetilde{\mathcal{A}}_U=U\times\mathbb{O}\times\mathbb{A}$
\item
${\bf T}\mathcal{M}_U\equiv U\times\mathbb{O}\times\mathbb{A}\times\mathbb{E}$.
\end{description}
Consider  $(a_0,v_0)\in {\bf T}_{m_0}\mathcal{M}$ where $m=(x_0,e_0)\in \mathcal{M}_U$. Using local coordinates, if $m_0\equiv \mathsf{(x_0,e_0)}$ and $(a_0,v_0)\equiv \mathsf{(a_0,v_0)}$ we consider
 the section
\[
\mathsf{X}: U\times\mathbb{O}\to U\times\mathbb{O}\times\mathbb{A}\times\mathbb{E}
\]
given by $ \mathsf{X(x,e)=(x,e, a_0, v_0)}$. Then, by construction, the corresponding local section ${\bf X}\equiv X$ is a projectable section defined on $\mathcal{M}_U$.\\

(3) Under the assumptions of (3),  note that the restriction ${\bf p}_{{\bf K}\mathcal{M}}$ of
 ${\bf p}_{\widetilde{\mathcal{A}}}$ to ${\bf K}\mathcal{M}$ is an isomorphism onto $\widetilde{\mathcal{A}}$.
  Let ${\bf X}$ be a section of ${\bf T}\mathcal{M}_U$. Thus   the difference   ${\bf X}-{\bf X}^K$  is a vertical section. Since the sum of two projectable sections is a projectable section,  it follows that ${\bf X}$ is projectable if and only if ${\bf X}^K$ is projectable. 
\end{proof}

\begin{notations}
\label{N_SheafSectionBracketlocal}
${}$
\begin{enumerate}
\item\textbf{Sheaf $\mathfrak{P}_\mathcal{M}$ of sections of $\mathbf{T}\mathcal{M}$:}\\
On the one hand,  for any open set $\mathcal{U}$ of $\mathcal{M}$, if $U=\mathbf{p}(\mathcal{U})$, then  $\mathcal{U}\subset \mathcal{M}_U$ and so 
$\mathfrak{P}(\mathcal{U}):=\{(\mathbf{X}_{| \mathcal{U}},\; \mathbf{X}\in 
\mathfrak{P}(\mathcal{M}_U)\}$.\\
On the other hand, since the set of smooth sections of a Banach bundle defines a sheaf over its basis, it follows that the set  $\{\mathfrak{P}({\bf T}\mathcal{U}),\; \mathcal{U} \textrm{ open set in }\mathcal{M}\}$ defines a sub-sheaf of modules $\mathfrak{P}_\mathcal{M}$ of the sheaf of modules  $\Gamma_\mathcal{M}$ of sections of ${\bf T}\mathcal{M}$.
 Thus  $\{\mathfrak{P}(\mathcal{M}_U),\; U\textrm{ open set in } M\}$ generates a sheaf of modules $\mathfrak{P}_\mathcal{M}$ on $\mathcal{M}$. According to Definition \ref{D_PartialConvenientLieAlgebroid}, in this 
context, when no confusion is possible, we simply denote by $[.,.]_{{\bf T}\mathcal{M}}$ the sheaf of  brackets  generated by $\{[.,.]_{{\bf T}\mathcal{M}_U}, U\textrm{ open set in  } M\}$.\\

\item
{\bf Local version of the Lie bracket $[.,.]_{{\bf T}\mathcal{M}} $:}\\
Each section ${\bf X}$ in  $\mathfrak{P}({\bf T}\mathcal{M}_U)$ has a decomposition:
\[
{\bf X}=\displaystyle\sum_{i=1}^p f_i\mathbf{X}_i
\]
with  $f_i \in C^\infty(\mathcal{M}_U)$ and ${\bf X}_i$  are  projectable sections for all $i \in \{1,\dots,p \}$.

We consider a chart domain $U$ in $M$ for which the situation considered in the proof of Lemma \ref{L_extbracket} (2) is valid. In the associated local coordinates, for any section ${\bf X}$ of ${\bf T}\mathcal{M}_U$, we have ${\bf X}(x,e)\equiv\mathsf{(x,e, a(x,e), z(x,e))}$. Now, ${\bf X}$ is projectable if and only if $\mathsf{a}$ only depends on $\mathsf{x}$. Under these notations, if $\bf{X}'\equiv \mathsf{(x,e, a'(e), z'(x,e))}$ is another projectable section, we have (cf (\ref{eq_projectTMbracket}) and Notations \ref{eq_projectTMbracket}):
\begin{eqnarray}
\label{eq_BracketProject}
\begin{aligned}
\left[{\bf X},{\bf X}' \right]_{{\bf T}\mathcal{M}}(x,e)
\equiv &\mathsf{(x,e, C_x(a,a')} + d \mathsf{a'(\rho_x(a))} - d \mathsf{a(\rho_x(a')),}\\
&d \mathsf{ z'(\rho_x(a),z)} - d \mathsf{ z((\rho_x(a'),z')))}.
\end{aligned}
\end{eqnarray}

Now consider two sections ${\bf X}$ and ${\bf X}'$ in $\mathfrak{P} \left( {\bf T}\mathcal{M}_U \right)$. We can write ${\bf X}=\displaystyle\sum_{i=1}^p f_i\mathbf{X}_i$  and   ${\bf X}'=\displaystyle\sum_{j=1}^q f'_j\mathbf{X}'_j$ where $\left( f_i, f'_i \right) \in C^\infty(\mathcal{M}_U)^2$ and ${\bf X}_i$ and ${\bf X}'_j$ are  projectable sections for all $i \in \{1,\dots,p\}$ and $j \in \{1,\dots,q\}$.  Since  the value of the  Lie bracket $[{\bf X},{\bf X}']_{{\bf T}\mathcal{M}}$  at $m$ only  depends on the $1$-jet  of ${\bf X}$ and ${\bf X}'$ and $m$, so  this value does not depend on the previous decompositions. Now ${\bf X}$ and ${\bf X}'$ can be also written as a pair $(\mathfrak{a}, X)$ and $(\mathfrak{a}', X')$ respectively. Of course, if $m=(x,e)$,  we have
\[
\mathfrak{a}(m)=  \displaystyle\sum_{i=1}^p f_i(m)a_i(x) \textrm{   and   } \mathfrak{a}'(m)=  \displaystyle\sum_{j=1}^q f'_i(m)a'_i(x)
\]
\[
X= \displaystyle\sum_{i=1}^p f_iX_i \textrm{   and   } X= \displaystyle\sum_{i=1}^p f_iX_i.
\]
In local coordinates, we then have
\[
{\bf X}\equiv \mathsf{(a,z)=\left( \displaystyle\sum_{i=1}^p f_i(m)a_i,\sum_{i=1}^p f_iz_i \right) }
\]
\[
{\bf X}'\equiv \mathsf{(a',z')= \left( \displaystyle\sum_{j=1}^q f'_ja'_j,\sum_{j=1}^q f_jz_j \right) }.
\]
Thus the Lie bracket $[{\bf X},{\bf X}']_{{\bf T}\mathcal{M}}$ has the following expression in local coordinates:
\begin{eqnarray}
\label{eq_BracketQuasiprojectable}
\begin{aligned}
\left[{\bf X},{\bf X}'\right]_{{\bf T}\mathcal{M}}&
\equiv \mathsf{( C(a,a') } +d \mathsf{a'(\rho_x(a'))}\\
&- d \mathsf{a(\rho_x(a')),} d \mathsf{z'(\rho_x(a),z)} -d \mathsf{z((\rho_x(a'),z'))}
\end{aligned}
\end{eqnarray}
where $\mathsf{C}: U\to {L}_{alt}^2(\mathbb{A})$ is defined in local coordinates by the Lie bracket $[.,.]_\mathcal{A}$ and where
\[
\mathsf{C(a,a')(m)= \displaystyle\sum_{i=1}^p \sum_{j=1}^q f_i(m)f'_j(m) C_x(a_i(x),a'_j(x))}.
\]
\end{enumerate}
\end{notations}

\begin{remark}
\label{R_AFiniterank}
According to Comments \ref{Com_PartialBracket}, when $\mathcal{A}$ is a finite rank vector bundle, then $\mathfrak{P}(\mathcal{M}_U=\Gamma(\mathbf{T}\mathcal{M}_U$ and so $[.,.]_{\mathbf{T}\mathcal{M}_U}$ is defined for  all sections of $\Gamma(\mathbf{T}\mathcal{M}_U)$.
\end{remark}
Now, from  Lemma \ref{L_extbracket},  $ \left( \mathfrak{P}({\bf T}\mathcal{M}_U), [.,.]_{{\bf T}\mathcal{M}} \right)$ is a Lie algebra and $\hat{\rho}$ induces a Lie algebra morphism from  $(\mathfrak{P}({\bf T}\mathcal{M}_U), [.,.]_{{\bf T}\mathcal{M}})$ to  the  Lie algebra of vector fields on $\mathcal{M}_U$. In this way,  so we get:

\begin{theorem}
\label{T_PartialLieAlgebroid} 
The  sheaf $\mathfrak{P}_\mathcal{M}$ on $\mathcal{M}$  which gives rise to a strong partial convenient Lie algebroid on the anchored bundle
$({\bf T}\mathcal{M}, \hat{p}, \mathcal{M}, \hat{\rho})$. Moreover, the restriction of the bracket $[.,.]_{{\bf T}\mathcal{M}}$ to
the module of vertical sections induces a  convenient Lie algebroid  structure on the anchored subbundle $({\bf V}\mathcal{M}, \hat{p}_{|{\bf V}\mathcal{M}},\mathcal{M},\hat{\rho})$ which is independent on the bracket $[.,.]_\mathcal{A}$.
\end{theorem}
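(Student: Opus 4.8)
The plan is to derive the first assertion almost directly from Lemma~\ref{L_extbracket} together with the discussion preceding the statement, and to concentrate the real work on the vertical subbundle. First I would recall from Notations~\ref{N_SheafSectionBracketlocal} that $\{\mathfrak{P}(\mathbf{T}\mathcal{M}_U),\ U\text{ open in }M\}$ generates a sub-sheaf of modules $\mathfrak{P}_\mathcal{M}$ of the sheaf of sections of $\mathbf{T}\mathcal{M}$, and that the local brackets $[.,.]_{\mathbf{T}\mathcal{M}_U}$ assemble into a sheaf bracket. Then I would check the three conditions of Definition~\ref{D_PartialConvenientLieAlgebroid} on this sheaf: conditions (i) and (ii), the Leibniz identity and dependence on $1$-jets only, are exactly the almost Lie bracket properties of $[.,.]_{\mathbf{T}\mathcal{M}_U}$ established in Lemma~\ref{L_extbracket}(1), while condition (iii), that $\hat{\rho}$ is a Lie algebra morphism $\mathfrak{P}(\mathbf{T}\mathcal{M}_U)\to\mathfrak{X}(\mathcal{M}_U)$, is the relation $\hat{\rho}([\mathbf{X}_1,\mathbf{X}_2]_{\mathbf{T}\mathcal{M}})=[\hat{\rho}(\mathbf{X}_1),\hat{\rho}(\mathbf{X}_2)]$ extended from (\ref{eq_HatRhoMorphism}) to $\mathfrak{P}(\mathbf{T}\mathcal{M}_U)$ in the same Lemma. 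The Jacobi identity making this a sheaf of Lie algebras is the vanishing of $J_{\mathbf{T}\mathcal{M}_U}$ on $\mathfrak{P}(\mathbf{T}\mathcal{M}_U)$, also proved there. Finally, to see that the partial structure is \emph{strong}, I would invoke Lemma~\ref{L_extbracket}(2): for $m=(x,e)$ and any $(a,v)\in\mathbf{T}_m\mathcal{M}$ there is a projectable section $\mathbf{X}\in\mathfrak{P}(\mathbf{T}\mathcal{M}_U)$ with $\mathbf{X}(m)=(a,v)$, so the stalk $\mathfrak{P}_m$ evaluates onto the whole fibre $\hat{\mathbf{p}}^{-1}(m)=\mathbf{T}_m\mathcal{M}$, which is precisely the strongness requirement.

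For the statement about the vertical subbundle the decisive observation is that \emph{every} vertical section is projectable. Indeed, if $\mathbf{Z}$ is a section of $\mathbf{V}\mathcal{M}=\ker\mathbf{p}_{\widetilde{\mathcal{A}}}$ then $\mathbf{p}_{\mathcal{A}}\circ\mathbf{Z}=0$ is the zero section of $\mathcal{A}$, so $\mathbf{Z}$ is projectable with associated pair $(0,\hat{\rho}(\mathbf{Z}))$; hence $\Gamma(\mathbf{V}\mathcal{M}_U)=\mathfrak{P}(\mathbf{V}\mathcal{M}_U)$ and the bracket is defined on \emph{all} vertical sections, not merely on a proper sub-sheaf. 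I would then apply formula (\ref{eq_projectTMbracket}) to two vertical sections $\mathbf{Z}=(0\circ\mathbf{p},Z)$ and $\mathbf{Z}'=(0\circ\mathbf{p},Z')$, obtaining
\[
[\mathbf{Z},\mathbf{Z}']_{\mathbf{T}\mathcal{M}}=([0,0]_{\mathcal{A}}\circ\mathbf{p},[Z,Z'])=(0,[Z,Z']),
\]
where $Z=\hat{\rho}(\mathbf{Z})$ and $Z'=\hat{\rho}(\mathbf{Z}')$ are vertical vector fields on $\mathcal{M}$. Since the vertical distribution is involutive, $[Z,Z']$ is again vertical, so the bracket closes on $\Gamma(\mathbf{V}\mathcal{M}_U)$; and since by Theorem~\ref{T_Prolongation}(2) the map $\hat{\rho}$ restricts to a bundle isomorphism of $\mathbf{V}\mathcal{M}$ onto the vertical tangent bundle, this identifies $(\mathbf{V}\mathcal{M},\hat{\mathbf{p}}_{|\mathbf{V}\mathcal{M}},\mathcal{M},\hat{\rho})$ with the tangent Lie algebroid of the integrable vertical distribution. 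The Leibniz rule and the Jacobi identity are then inherited from the Lie bracket of vector fields, so by Definition~\ref{D_ConvenientLieAlgebroid} we obtain a genuine (non-partial) convenient Lie algebroid.

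Independence of $[.,.]_{\mathcal{A}}$ is then immediate from the displayed computation: the only place the anchored-bundle bracket enters is through $[0,0]_{\mathcal{A}}=0$, whereas $[Z,Z']$ is the intrinsic Lie bracket of vector fields on $\mathcal{M}$, which knows nothing of $[.,.]_{\mathcal{A}}$. The part I expect to require the most care is not any single computation but the conceptual point that the obstruction which forces $\mathbf{T}\mathcal{M}$ to be only a \emph{partial} Lie algebroid --- the failure of a general section to be projectable when $\mathbb{A}$ is infinite dimensional (Comments~\ref{Com_PartialBracket}) --- simply evaporates on $\mathbf{V}\mathcal{M}$, because the $\mathcal{A}$-component of a vertical section is forced to vanish and hence is trivially projectable; making this transition rigorous, rather than the bracket identities themselves, is the crux.
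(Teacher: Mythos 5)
Your proposal is correct and follows essentially the same route as the paper: the first assertion is obtained, exactly as in the paper's proof, from Lemma \ref{L_extbracket} (part (1) giving the sheaf of Lie algebras, the Leibniz and $1$-jet conditions and the morphism property of $\hat{\rho}$; part (2) giving strongness), and the vertical statement is the paper's observation that the bracket of two vertical sections is again vertical and independent of $[.,.]_{\mathcal{A}}$. Your explicit intermediate step --- that a vertical section is automatically projectable with associated pair $(0,Z)$, so that $[\mathbf{Z},\mathbf{Z}']_{\mathbf{T}\mathcal{M}}=(0,[Z,Z'])$ and $\hat{\rho}$ identifies $\mathbf{V}\mathcal{M}$ with the tangent algebroid of the vertical foliation --- merely spells out what the paper leaves implicit in its phrase \emph{``from the definition of the bracket of projectable sections, it is clear that\ldots''}.
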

From Remark \ref{R_AFiniterank}, we obtain:
\begin{corollary}
\label{C_LieAlgebroidTM}
If $\mathcal{A}$ is a finite dimensional fiber Banach bundle, then\\ 
$ \left( {\bf T}\mathcal{M},\hat{p}, \mathcal{M}, \hat{\rho},[.,.]_{{\bf T}\mathcal{M}} \right) $ is a convenient Lie algebroid.
\end{corollary}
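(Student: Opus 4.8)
The plan is to show that, under the finite--rank hypothesis, the strong partial convenient Lie algebroid produced by Theorem \ref{T_PartialLieAlgebroid} is in fact a genuine convenient Lie algebroid, the only point to establish being that its bracket is defined on \emph{all} sections and not merely on the submodule of projectable ones. Indeed, Theorem \ref{T_PartialLieAlgebroid} already endows $({\bf T}\mathcal{M},\hat{\mathbf{p}},\mathcal{M},\hat{\rho})$ with the sheaf $\mathfrak{P}_\mathcal{M}$ and the sheaf bracket $[.,.]_{{\bf T}\mathcal{M}}$, while Lemma \ref{L_extbracket}(1) shows that on each $\mathfrak{P}({\bf T}\mathcal{M}_U)$ this bracket is skew--symmetric and bilinear, satisfies the Leibniz identity, depends only on the $1$--jets of its arguments, and has an identically vanishing Jacobiator. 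Thus the entire verification of the axioms of Definitions \ref{D_AlmostLieBracketOnAnAnchoredBundle} and \ref{D_ConvenientLieAlgebroid} is already available; what remains is to upgrade the domain of definition from $\mathfrak{P}_\mathcal{M}$ to the full sheaf $\Gamma_\mathcal{M}$ of sections of ${\bf T}\mathcal{M}$.

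The key step is to invoke Remark \ref{R_AFiniterank}, itself drawn from Comments \ref{Com_PartialBracket}: when $\mathcal{A}$ has finite rank one has $\mathfrak{P}({\bf T}\mathcal{M}_U)=\Gamma({\bf T}\mathcal{M}_U)$ for every open set $U$. I would recall the mechanism in the local model of the proof of Theorem \ref{T_Prolongation}: over a chart domain an arbitrary section of ${\bf T}\mathcal{M}_U$ reads $(\mathsf{x},\mathsf{e})\mapsto(\mathsf{x},\mathsf{e},\mathsf{a}(\mathsf{x},\mathsf{e}),\mathsf{z}(\mathsf{x},\mathsf{e}))$, and since $\dim\mathbb{A}<\infty$ one may expand $\mathsf{a}(\mathsf{x},\mathsf{e})=\sum_i f_i(\mathsf{x},\mathsf{e})\,a_i(\mathsf{x})$ in a finite local frame $(a_i)$ of $\mathcal{A}_U$ pulled back to $\mathcal{M}_U$, while the purely vertical remainder carried by $\mathsf{z}$ is itself projectable (take $\mathfrak{a}=0$). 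Hence every section is a finite $C^\infty(\mathcal{M}_U)$--combination of projectable sections. It is worth stressing that only the typical fibre $\mathbb{A}$ of $\mathcal{A}$ need be finite dimensional here: the fibre $\mathbb{E}$ of $\mathcal{M}$, and therefore the vertical bundle ${\bf V}\mathcal{M}$, may remain infinite dimensional, since the $\mathsf{z}$--part never obstructs the generation.

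With $\mathfrak{P}_\mathcal{M}=\Gamma_\mathcal{M}$ in hand the conclusion is immediate: the sheaf bracket $[.,.]_{{\bf T}\mathcal{M}}$ of Theorem \ref{T_PartialLieAlgebroid} is now defined on the full module $\Gamma({\bf T}\mathcal{M}_U)$ over every open set, it is an almost Lie bracket in the sense of Definition \ref{D_AlmostLieBracketOnAnAnchoredBundle} by Lemma \ref{L_extbracket}(1), and its Jacobiator vanishes identically on $\Gamma({\bf T}\mathcal{M}_U)$, again by Lemma \ref{L_extbracket}(1). By Definition \ref{D_ConvenientLieAlgebroid} this is exactly the assertion that $({\bf T}\mathcal{M},\hat{\mathbf{p}},\mathcal{M},\hat{\rho},[.,.]_{{\bf T}\mathcal{M}})$ is a convenient Lie algebroid.

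I expect the only delicate point to be the passage from chart domains to arbitrary open sets, since Definitions \ref{D_AlmostLieBracketOnAnAnchoredBundle} and \ref{D_ConvenientLieAlgebroid} quantify over all open $U$ in $M$ whereas the finite--generation argument is most transparent on the trivialising opens where ${\bf T}\mathcal{M}$ is a trivial bundle. I would handle this through the sheaf structure: $\mathfrak{P}_\mathcal{M}$ and $\Gamma_\mathcal{M}$ are sub-sheaves of the same sheaf of sections that agree on the basis of trivialising opens, hence coincide; and because both the almost Lie axioms and the vanishing of the Jacobiator are pointwise, $1$--jet conditions (as made explicit by the local formulas (\ref{eq_projectTMbracket}) and (\ref{eq_BracketQuasiprojectable})), verifying them on this basis suffices. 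No computation beyond those already carried out in Lemma \ref{L_extbracket} and Comments \ref{Com_PartialBracket} is required.
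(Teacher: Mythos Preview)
Your proposal is correct and follows essentially the same approach as the paper: the paper simply states that the corollary follows from Remark~\ref{R_AFiniterank} (which in turn rests on Comments~\ref{Com_PartialBracket}), and you have faithfully unpacked that remark, together with the relevant inputs from Theorem~\ref{T_PartialLieAlgebroid} and Lemma~\ref{L_extbracket}(1). Your added discussion of the passage from trivialising chart domains to arbitrary open sets via the sheaf property is a welcome clarification that the paper leaves implicit.
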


\begin{proof}[Proof of Theorem \ref{T_PartialLieAlgebroid}]
From  Lemma \ref{L_extbracket},   $(\mathfrak{P}({\bf T}\mathcal{M}_U), [.,.]_{{\bf T}\mathcal{M}})$ is a Lie algebra and $\hat{\rho}$ induces a Lie algebra morphism from  $(\mathfrak{P}({\bf T}\mathcal{M}_U), [.,.]_{{\bf T}\mathcal{M}})$ to  the  Lie algebra of vector fields on $\mathcal{M}_U$. This implies the same properties for  $(\mathfrak{P}({\bf T}\mathcal{U}), [.,.]_{{\bf T}\mathcal{M}})$ for any open set  $\mathcal{U}$ in $\mathcal{M}$. Thus we obtain a sheaf $\mathfrak{P}_\mathcal{M}$ of Lie algebras and $C^\infty_\mathcal{M}$ modules on $\mathcal{M}$, which  implies that   $({\bf T}\mathcal{M},\hat{p}, \mathcal{M}, \hat{\rho},[.,.]_{{\bf T}\mathcal{M}})$  is a  partial convenient Lie algebroid  which is strong, according to Lemma \ref{L_extbracket} (2).\\

It remains to prove the last property. From its  definition,  the restriction of $\hat{\rho}$ to the vertical bundle ${\bf V}\mathcal{M}\to \mathcal{M}$ is an isomorphism onto the vertical bundle of the tangent bundle $T\mathcal{M}\to \mathcal{M}$. Now, from the definition  of the bracket of  projectable sections, it is clear that the bracket of two (local) vertical sections of ${\bf T}\mathcal{M}\to \mathcal{M}$ is also a (local) vertical section which is independent on the choice of $[.,.]_\mathcal{A}$.\\
\end{proof}
\begin{remark}
\label{R_liftmorphism}
Given a fibred  morphism  $\Psi:{\mathcal{M}}\to {\mathcal{M}}'$ between fibred manifold ${\bf p}:{\mathcal{M}}\to M$  and ${\bf p}':{\mathcal{M}'}\to M'$  over $\psi:M\to M'$ and a morphism of anchored bundles  $ \varphi$ between two Lie algebroids $({\mathcal{A}}, \pi, M,\rho, [.,.]_{\mathcal{A}})$ and $({\mathcal{A}}', \pi',M',\rho',[.,.]_{\mathcal{A}'})$,  over  $\psi:M\to M'$. Then,  from Remark \ref{R_LieMorphismPartialAlgebroid},  the prolongation ${\bf T}\Psi: {{\bf T}}^{\mathcal{A}}{\mathcal{M}}\to {{\bf T}}^{\mathcal{A}'}{\mathcal{M}}'$,  is a Lie morphism of  partial convenient Lie algebroids from $({\bf T}^{\mathcal{A}}\mathcal{M},\mathcal{M},\hat{\rho},\mathfrak{P}_{\mathcal{M}})$ to $({\bf T}^{\mathcal{A}'}\mathcal{M}',\mathcal{M}',\hat{\rho}',\mathfrak{P}_{\mathcal{M}'})$.
\end{remark}

\subsection{Derivative operator on ${\bf T}\mathcal{M}$}
\label{___LieDerivativeExteriorDifferentialAndNijenhuisTensorOnTM}

Consider an open set $\mathcal{U}$ in $\mathcal{M}$. We simply denote ${\bf T}\mathcal{U}$ the restriction of ${\bf T}\mathcal{M}$ to $\mathcal{U}$. Note that $U={\bf p}(\mathcal{U})$ is an open set in $M$ and of course $\mathcal{U}$ is contained in the open set $\mathcal{M}_U$; so we have a natural restriction map from $\Gamma({\bf T}\mathcal{M}_U)$ (resp. $\mathfrak{P}({\bf T}\mathcal{M}_U)$) into $\Gamma({\bf T}\mathcal{U})$ (resp. $\mathfrak{P}({\bf T}\mathcal{U})$).\\

Since   ${\bf T}\mathcal{M}$ has only a  strong partial convenient Lie algeboid structure, by application of results of  subsection \ref{___DerivativeOperators}, we then have the  following result:
\begin{theorem}
\label{T_DifferentialOperatorTM}
Fix some open set $\mathcal{U}$ in $\mathcal{M}$.
\begin{enumerate}
\item 
Fix some projectable section $\mathfrak{u}\in \mathfrak{P}({\bf T}\mathcal{U})$. For any $k$-form $\omega$ on $\mathcal{U}$  the Lie derivative
$L^{\hat{\rho}}_\mathfrak{ u}\omega$  is a well defined $k$-form on $\mathcal{U}$.
\item 
For any $k$-form $\omega$ on $\mathbf{T}\mathcal{U}$ the exterior derivative $d_{\hat{\rho}}\omega$ of $\omega$ is well defined $(k+1)$-form on $\mathcal{U}$.\\
\end{enumerate}
\end{theorem}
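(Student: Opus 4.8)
The plan is to recognize this theorem as a direct specialization of the derivative calculus developed in subsection \ref{___DerivativeOperators} for strong partial Lie algebroids. By Theorem \ref{T_PartialLieAlgebroid}, the data $(\mathbf{T}\mathcal{M}, \hat{\mathbf{p}}, \mathcal{M}, \hat{\rho}, \mathfrak{P}_\mathcal{M})$ form a strong partial convenient Lie algebroid; hence the anchored bundle $(\mathcal{A}, \pi, M, \rho)$ together with its strong partial structure $\mathfrak{P}_M$ used throughout subsection \ref{___DerivativeOperators} may be replaced verbatim by $(\mathbf{T}\mathcal{M}, \hat{\mathbf{p}}, \mathcal{M}, \hat{\rho}, \mathfrak{P}_\mathcal{M})$. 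First I would record the one structural fact underlying every construction there: since the partial structure is \emph{strong}, for any $m=(x,e)\in\mathcal{U}$ and any $\omega\in\bigwedge^k\Gamma^*(\mathbf{T}\mathcal{U})$ the value $\omega(\mathbf{X}_1(m),\dots,\mathbf{X}_k(m))$ depends only on the values $\mathbf{X}_i(m)$. This is exactly the content of Lemma \ref{L_extbracket}(2), which produces, through any prescribed value in $\mathbf{T}_m\mathcal{M}$, a projectable section lying in $\mathfrak{P}(\mathbf{T}\mathcal{U})$; it guarantees that the auxiliary sections appearing in (\ref{eq_Lq}) and (\ref{eq_dext}) always exist and that the forms evaluate unambiguously pointwise.

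For part (1), a projectable section $\mathfrak{u}$ belongs to $\mathfrak{P}(\mathbf{T}\mathcal{U})$ by construction, so Proposition \ref{P_Liekform} applies with $\overline{\mathfrak{a}}=\mathfrak{u}$, reading $\hat{\rho}$ for $\rho$ and $[.,.]_{\mathbf{T}\mathcal{M}}$ for $[.,.]_{\mathcal{A}}$: formula (\ref{eq_L0}) defines $L^{\hat{\rho}}_{\mathfrak{u}}$ on functions and (\ref{eq_Lq}) defines it on $k$-forms, and the argument in the proof of Proposition \ref{P_Liekform} shows the result is a well-defined $k$-form whose value at $m$ depends only on the $1$-jets of $\mathfrak{u}$ and $\omega$ and on the values $\mathbf{X}_i(m)$. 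For part (2), Proposition \ref{P_Exreriordidderential} applies in the same manner: formula (\ref{eq_dext}) defines $d_{\hat{\rho}}\omega$, which depends only on the $1$-jet of $\omega$, and is therefore a well-defined $(k+1)$-form, with (\ref{Eq_WedgeProduct}) and (\ref{Eq_dcircd}) carrying over so that $d_{\hat{\rho}}$ is a graded derivation of degree $1$ squaring to zero. In both cases the smoothness (boundedness) of the resulting fields follows from the uniform boundedness principle (\cite{KrMi}, Proposition 30.1), exactly as at the end of the proof of Proposition \ref{P_Liekform}, since the local structure field for $\mathbf{T}\mathcal{M}$ exhibited in Notations \ref{N_SheafSectionBracketlocal} via (\ref{eq_BracketQuasiprojectable}) is smooth and $\hat{\rho}$ is a bounded convenient morphism.

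The hard part will not be any computation: the formulae and their well-definedness are inherited wholesale from Propositions \ref{P_Liekform} and \ref{P_Exreriordidderential}, and the only genuine verification is that their hypotheses hold, i.e. that $\mathfrak{P}_\mathcal{M}$ is the structural sheaf of a \emph{strong} partial Lie algebroid on $\mathbf{T}\mathcal{M}$ — which is precisely Theorem \ref{T_PartialLieAlgebroid}. I would close by remarking, in the spirit of Remark \ref{R_Liederivativef}, that the restriction to projectable (more generally, to $\mathfrak{P}_\mathcal{M}$-)sections in part (1) is essential for $k>0$: because $L^{\hat{\rho}}_{\mathfrak{u}}\omega$ genuinely depends on the $1$-jet of $\mathfrak{u}$, the Lie derivative cannot be defined with respect to an arbitrary section of $\Gamma(\mathbf{T}\mathcal{U})$, whereas the exterior derivative in part (2) is intrinsic because the auxiliary sections $\overline{\mathfrak{a}}_i$ enter only through the brackets $[\overline{\mathfrak{a}}_i,\overline{\mathfrak{a}}_j]_{\mathbf{T}\mathcal{M}}$ evaluated at $m$.
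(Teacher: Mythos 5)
Your proposal is correct and follows exactly the paper's route: the paper gives no separate argument for Theorem \ref{T_DifferentialOperatorTM}, deriving it directly from Propositions \ref{P_Liekform} and \ref{P_Exreriordidderential} once Theorem \ref{T_PartialLieAlgebroid} (with Lemma \ref{L_extbracket}(2) supplying strongness, hence the auxiliary sections through prescribed values) establishes that $(\mathbf{T}\mathcal{M},\hat{\mathbf{p}},\mathcal{M},\hat{\rho},\mathfrak{P}_\mathcal{M})$ is a strong partial convenient Lie algebroid. Your closing remark on the $1$-jet dependence in $\mathfrak{u}$ for $k>0$ matches Remark \ref{R_Liederivativef} and is a faithful reading of why part (1) is restricted to sections of $\mathfrak{P}(\mathbf{T}\mathcal{U})$.
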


\subsection{Prolongations and foliations}
\label{___ProlongationsAndFoliations}
\emph{We assume that $(\mathcal{A},M,\rho,[.,.]_\mathcal{A})$ is a split  Banach Lie algebroid}.\\

Under the upper assumptions, by application of Theorem \ref{T_Prolongation} and  Theorem 2 in \cite{Pe1}, we obtain  the following  link between the foliation on $M$ and the foliation on $T\mathcal{M}$:
\begin{theorem}
\label{T_tildefol}
Assume that $(\mathcal{A},\pi,M,\rho,[.,.]_\mathcal{A})$ is split  and the distribution $\rho(\mathcal{A})$ is closed.
\begin{enumerate}
\item
The distribution $\hat{\rho}({\bf T}\mathcal{A})$ is integrable on ${\bf T}\mathcal{M}$.
\item
Assume that $\mathcal{M}=\mathcal{A}$. Let $L$  be a leaf of $\rho(\mathcal{A})$ and $(\mathcal{A}_L,\pi_{|L},L,\rho_L,[.,.]_{\mathcal{A}_L})$ be the Banach-Lie algebroid  which is the restriction of $(\mathcal{A}, \pi, M,\rho,[.,.]_{\mathcal{A}})$. Then $\hat{L}=\mathcal{A}_L=\pi^{-1}(L)$ is a leaf of $\hat{\rho}( \mathbf{T}\mathcal{A})$ such that $\hat{\mathbf{p}}(\hat{L})=L$.
\item
In the previous context, the partial Banach-Lie algebroid which is the  prolongation of  $(\mathcal{A}_L,\pi_{|L},L,\rho_L,\mathfrak{P}_{\mathcal{A}_L})$ over $\hat{L}$  is exactly the restriction to $\hat{L}$ of the partial  Banach-Lie algebroid $({\bf T}\mathcal{A}, \hat{p},\mathcal{A},\hat{\rho}, \mathfrak{P}_{\mathcal{A}})$.
\end{enumerate}
\end{theorem}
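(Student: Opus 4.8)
The whole argument rests on identifying the distribution $\hat\rho(\mathbf T\mathcal M)$ as the $\mathbf p$-pullback of $\rho(\mathcal A)$. Indeed, from the very definition $\mathbf T^{\mathcal A}_m\mathcal M=\{(a,\mu)\in\mathcal A_x\times T_m\mathcal M:\rho(a)=T\mathbf p(\mu)\}$ (with $x=\mathbf p(m)$) together with $\hat\rho(m,a,\mu)=(m,\mu)$, a vector $\mu\in T_m\mathcal M$ lies in $\hat\rho(\mathbf T\mathcal M)_m$ if and only if $T_m\mathbf p(\mu)\in\rho(\mathcal A_x)$, so that
\[
\hat\rho(\mathbf T\mathcal M)_m=(T_m\mathbf p)^{-1}\big(\rho(\mathcal A)_x\big),\qquad x=\mathbf p(m).
\]
By Theorem \ref{T_IntegrabilityDistributionRangeAnchor}, the hypotheses (split Banach Lie algebroid, $\rho(\mathcal A)$ closed) guarantee that $\rho(\mathcal A)$ is integrable, its leaves $L$ being weak submanifolds of $M$ with $T_xL=\rho(\mathcal A)_x$ for $x\in L$. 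Since $\mathbf p$ is a surjective fibration, each $\mathbf p^{-1}(L)$ is a weak submanifold of $\mathcal M$ (in a chart $\mathcal M_U\equiv U\times\mathbb O$ one has $\mathbf p^{-1}(L)\cap\mathcal M_U\equiv(L\cap U)\times\mathbb O$), whose tangent space is $T_m\big(\mathbf p^{-1}(L)\big)=(T_m\mathbf p)^{-1}(T_xL)=\hat\rho(\mathbf T\mathcal M)_m$. Thus $\{\mathbf p^{-1}(L)\}$ is a family of integral manifolds covering $\mathcal M$, which proves part (1).

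For part (2), specialise to $\mathcal M=\mathcal A$ and $\mathbf p=\pi$. By part (1), $\hat L=\pi^{-1}(L)=\mathcal A_L$ is an integral manifold of $\hat\rho(\mathbf T\mathcal A)$ with $\pi(\hat L)=L$. To see it is a leaf, i.e. a maximal integral manifold, let $N$ be any connected integral manifold through $a\in\mathcal A_L$. The formula above gives $T_n\pi\big(\hat\rho(\mathbf T\mathcal A)_n\big)=\rho(\mathcal A)_{\pi(n)}$ for each $n\in N$, so $\pi(N)$ is a connected set tangent to $\rho(\mathcal A)$ passing through $\pi(a)\in L$; hence $\pi(N)\subseteq L$ and $N\subseteq\pi^{-1}(L)=\hat L$. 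As $\hat L$ is itself an integral manifold and (being a fibration over the connected leaf $L$ with connected fibres $\mathcal A_x$) is connected, it is precisely the leaf through $a$.

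For part (3), I would first identify the two anchored bundles fibrewise. Over $m\in\hat L$ (so $\pi(m)\in L$), the fibre of the prolongation of $(\mathcal A_L,\pi_{|L},L,\rho_L)$ over $\hat L$ is $\{(a,\mu):a\in\mathcal A_{\pi(m)},\ \mu\in T_m\mathcal A_L,\ \rho(a)=T\pi(\mu)\}$, while the fibre of $(\mathbf T\mathcal A)_{|\hat L}$ is $\{(a,\mu):a\in\mathcal A_{\pi(m)},\ \mu\in T_m\mathcal A,\ \rho(a)=T\pi(\mu)\}$. These coincide: since $\pi(m)\in L$ and $L$ is a leaf, $\rho(\mathcal A_{\pi(m)})=T_{\pi(m)}L$, so the constraint $\rho(a)=T\pi(\mu)$ forces $T\pi(\mu)\in T_{\pi(m)}L$, i.e. $\mu\in T_m\mathcal A_L=(T_m\pi)^{-1}(T_{\pi(m)}L)$; conversely every $\mu$ admissible for the restricted prolongation is admissible for $(\mathbf T\mathcal A)_{|\hat L}$. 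The anchors agree under $T\mathcal A_L\hookrightarrow T\mathcal A$. It then remains to match the sheaf structures $\mathfrak P_{\mathcal A_L}$ and $(\mathfrak P_{\mathcal A})_{|\hat L}$: both are generated, as in Lemma \ref{L_extbracket} and Definition \ref{D_PartialConvenientLieAlgebroid}, by projectable sections, and a projectable section $(\mathfrak a,X)$ of $\mathbf T\mathcal A$ restricts to the projectable section $(\mathfrak a_{|L},X_{|\hat L})$ of the restricted prolongation — note that $X$ is automatically tangent to $\hat L$, since $T\pi(X)=\rho\circ\mathfrak a\in TL$ along $\hat L$ — while conversely every local projectable section over $\hat L$ extends locally, $\mathcal A_L$ being a subbundle over $L$ and $\mathfrak P$ a sheaf. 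Finally the brackets agree because formula (\ref{eq_projectTMbracket}) reads $([\mathfrak a_1,\mathfrak a_2]_{\mathcal A},[X_1,X_2])$ and $[.,.]_{\mathcal A}$ restricts to $[.,.]_{\mathcal A_L}$ on the restricted Banach Lie algebroid.

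The main obstacle is this last compatibility in part (3): one must check, with some bookkeeping, that the \emph{partial} bracket sheaf $\mathfrak P$ restricts correctly, using that projectable sections generate $\mathfrak P$ and that the leaf condition $\rho(\mathcal A_x)=T_xL$ makes the tangency condition $\mu\in T_m\mathcal A_L$ automatic. The geometric content of parts (1) and (2) is, by contrast, an immediate consequence of the pullback description of $\hat\rho(\mathbf T\mathcal M)$ combined with Theorem \ref{T_IntegrabilityDistributionRangeAnchor}.
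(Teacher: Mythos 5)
Your treatment of (1) and (2) is correct but follows a genuinely different route from the paper's. The paper never pulls the foliation back from the base: it applies the abstract integrability criterion for weak distributions (\cite{BCP}, Theorem 8.39) directly to $\hat{\rho}(\mathbf{T}\mathcal{M})$, checking in the local model $\mathbf{T}_m\mathcal{M}\equiv\mathbb{A}\times\mathbb{E}$, $\hat{\rho}_m(a,v)\equiv(\rho_x(a),v)$, that $\ker\hat{\rho}_m\cong\ker\rho_x\times\mathbf{V}_m\mathcal{M}$ is split and $\hat{\rho}_m(\mathbf{T}_m\mathcal{M})=\rho_x(\mathcal{A}_x)\times\mathbf{V}_m\mathcal{M}$ is closed, and then verifying that $\hat{\rho}\left(\mathfrak{P}(\mathbf{T}\mathcal{M}_U)\right)$ is a generating upper set satisfying the Lie-invariance condition \textbf{(LB)}, via Lemma \ref{L_extbracket}(2) and the stability of projectable sections under the bracket. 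Your identification $\hat{\rho}(\mathbf{T}\mathcal{M})_m=(T_m\mathbf{p})^{-1}\left(\rho(\mathcal{A})_{\mathbf{p}(m)}\right)$, followed by the construction of integral manifolds as preimages of leaves, is more economical: it invokes Theorem \ref{T_IntegrabilityDistributionRangeAnchor} only on the base, and it delivers at once the leaf description of Remark \ref{R_prolongAL}(2) and an explicit maximality argument for (2), which the paper leaves essentially implicit (``Since $L$ is connected, so is $\mathcal{A}_L$...''). Two caveats: your parenthetical chart justification that $\mathbf{p}^{-1}(L)$ is a weak submanifold is loose, since $L$ carries the (possibly strictly finer) leaf topology and $\mathbf{p}^{-1}(L)\cap\mathcal{M}_U$ is not literally $(L\cap U)\times\mathbb{O}$; the correct construction is the pullback $\phi^{*}\mathcal{M}$ of the fibration along the weak submanifold $(N,\phi)$ realizing $L$. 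Likewise, ``a connected set tangent to $\rho(\mathcal{A})$ through a point of $L$ stays in $L$'' uses the accessibility property of the foliation of \cite{Pe1} and deserves a citation.

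There is a genuine flaw in your part (3): the claim that ``every local projectable section over $\hat{L}$ extends locally'' fails in general. A leaf is only a weak submanifold of $M$, whose topology may be strictly finer than the induced one (think of a dense leaf), so a section of $\mathcal{A}_L$ over a leaf-open set $V\subset L$ need not be the restriction of any section of $\mathcal{A}$ over an open subset of $M$; consequently $\mathfrak{P}_{\mathcal{A}_L}$ is in general strictly larger than the sheaf generated by restrictions of elements of $\mathfrak{P}_{\mathcal{A}}$, and the two sheaves cannot be matched by an extension argument. What the paper actually proves — and how the statement should be read — is the bundle-level identity $\mathbf{T}\mathcal{A}_L=(\mathbf{T}\mathcal{A})_{|\mathcal{A}_L}$ (your fibrewise computation using $\rho(\mathcal{A}_{\pi(m)})=T_{\pi(m)}L$ is exactly the paper's characterization), together with $\hat{\rho}(\mathbf{T}\mathcal{A}_L)=T\mathcal{A}_L$ and the one-way compatibility: restrictions of projectable sections are projectable (your observation that $X$ is automatically tangent to $\hat{L}$ is the right point) and the brackets agree by formula (\ref{eq_projectTMbracket}). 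With the extension claim deleted and the restriction read in this structural sense, your argument for (3) is sound and in fact more detailed than the paper's.
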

\begin{remark}
\label{R_prolongAL}
${}$
\begin{enumerate}
\item
If there exists a weak Riemannian metric on $\mathcal{A}$, the distribution $\rho(\mathcal{A})$ is closed and the assumptions of Theorem \ref{T_tildefol} are satisfied. These assumptions are always satisfied if $\rho$ is a Fredholm morphism.
\item
The set of leaves of the foliations defined by $\hat{\rho}(\mathbf{T}\mathcal{A})$ is
\[
\{\mathcal{A}_L, \; L \textrm{ leaf of } \rho(\mathcal{A})\;\}.
\]
\end{enumerate}
\end{remark}
\begin{proof}
From Theorem \ref{T_Prolongation}, if  $m=(x,e)\in \mathcal{A}$,  the fibre  ${\bf T}_m\mathcal{M}$ can be identified with ${\mathcal{A}}_x\times \widetilde{\mathcal{E}}_m\equiv \mathbb{A}\times\mathbb{E}$, so we have $\hat{\rho}_m(a, v)\equiv (\mathsf{\rho_x(a),v})$.  It follows that $\ker(\hat{\rho}_m)$ can be identified with $\ker\rho_x\times{\bf V}_m\mathcal{M}\subset {\mathcal{A}}_x\times T_m\mathcal{A}$. Thus, $\ker\hat{\rho}_m$ is split if and only if $\ker\rho_x$ is split.
Moreover $\hat{\rho}_m({\bf T}_m\mathcal{A})=\rho_x(\mathcal{A}_x)\times {\bf V}_m\mathcal{M}$  is closed in ${\bf T}_m\mathcal{M}\equiv\mathbb{M}\times\mathbb{E}$ if and only if $\rho_x(\mathcal{A}_x)$ is closed in $T_xM$.  Then (1) will be a consequence of \cite{BCP}, Theorem 8.39 if we show that, for any $(x,e)\in\mathcal{M}$, there exists an open neighbourhood $U$ of $x$ in $M$  such that the set $\mathfrak{P}({\bf T}\mathcal{M}_U)$ is a generating upper set for $\hat{\rho}({\bf T}\mathcal{M})$ around $(x,e)$ (cf.  \cite{BCP}, Definition 8.38) and satisfied the condition {\bf (LB)} given in \cite{BCP}, $\S$ Integrability and Lie invariance.\\

The point $m=(x,e)\in \mathcal{M}$ is fixed and  we  choose a chart domain $U$ of $x\in M$ such that $\mathcal{M}_U$ and $\mathcal{A}_U$ are trivializable. Without loss of generality, according to the notations in $\S$ \ref{___LocalIdentificationsAndExpressionsInAConvenientBundle}, we may assume that $U\subset \mathbb{M}$, $\mathcal{M}_U=U\times\mathbb{O}\subset \mathbb{M}\times\mathbb{E}$, $\mathcal{A}_U=U\times\mathbb{A}$.
Thus, according to the proof of Theorem \ref{T_Prolongation}, we have ${\bf T}\mathcal{M}_U=U\times\mathbb{O}\times \mathbb{A}\times\mathbb{E}$. In this context, if $\{\mathsf{x}\}\times\mathbb{A}=\ker \rho_\mathsf{x}\oplus \mathbb{S}$,
then
\[
\{\mathsf{(x,e)}\}\times \mathbb{A}\times\mathbb{E}=\{\mathsf{(x,e)}\}\times(\ker \rho_\mathsf{x}\oplus \mathbb{S})\times \mathbb{E}.
\]
Now consider the upper trivialization $\rho: U\times \mathbb{A}\to U\times\mathbb{M}(=TM_U)$ and the associated upper trivialization:
\[
\hat{\rho}:U\times\mathbb{O}\times\mathbb{A}\times\mathbb{E}\to U\times\mathbb{O}\times\mathbb{A}\times\mathbb{M}(=T\mathcal{M}_U).
\]
Then, from the definition of $\hat{\rho}$,  any upper vector field is  of type
\[
{\bf X}_{\mathsf{(a,v)}}=\hat{\rho}\mathsf{(a, v)= (\rho(a), v)}
\]
for any $\mathsf{(a, v)}\in \mathbb{A}\times\mathbb{E}$. From the proof of Lemma \ref{L_extbracket} (2),  it follows that such a vector field is the range, under $\hat{\rho}$, of a projectable section. Moreover, the stability of projectable sections under the Lie bracket $[.,.]_{{\bf T}\mathcal{M}_U}$ and the fact that $\hat{\rho}$ induces a Lie algebra morphism from $\mathfrak{P}({\bf T}\mathcal{M}_U)$ into the Lie algebra of vector fields on $\mathcal{M}_U$  implies that condition ({\bf LB})  is satisfied for the set $\mathfrak{P}({\bf T}\mathcal{M}_U)$.\\
Assume that $\mathcal{M}=\mathcal{A}$. Fix some leaf $L$ in $M$. If $(\mathcal{A}_L,\pi_{|L},L,\rho_L,[.,.]_{\mathcal{A}_L})$ is  the Banach-Lie algebroid  which is the restriction of $(\mathcal{A}, \pi, M,\rho,[.,.]_\mathcal{A})$  then $\pi^{-1}(L)=\mathcal{A}_L$ and $\rho_L(\mathcal{A}_L)=TL$. Now by construction, the prolongation $\mathbf{T}\mathcal{A}_L$ of $\mathcal{A}_L$ relative to  the Banach-Lie algebroid  $(\mathcal{A}_L,\pi_{|L},L,\rho_L,[.,.]_{\mathcal{A}_L})$ is characterized by
\[
\mathbf{T}_{(x,a)}\mathcal{A}_L=\{(b, (v,y))\in \mathcal{A}_x \times T_{(x,a)}\mathcal{A}_L\;:\; \rho_x(b)=y\}.
\]
It follows that $\mathbf{T}\mathcal{A}_L$ is the restriction of $\mathbf{T}\mathcal{A}$ to $\mathcal{A}_L$ and also $\hat{\rho}(\mathbf{T}\mathcal{A}_L)=T\mathcal{A}_L$. Since $L$ is connected, so is $\mathcal{A}_L$ and then $\mathcal{A}_L$ is a leaf of $\hat{\rho}(\mathbf{T}\mathcal{M})$.
\end{proof}

\section{Projective limits of prolongations of Banach Lie algebroids}
\label{__ProjectiveLimitsOfProlongationsOfBanachAnchoredBundles}

\begin{definition}
\label{D_ProjectiveSequenceofBanachLieAlgebroids}
Consider a projective sequence of Banach-Lie algebroid bundles
\begin{center}
 $\left( \left(  \mathcal{A}_{i},\pi_{i},M_{i},\rho_{i},[.,.]_i \right),\left( \zeta_i^j, \xi_i^j, \delta_i^j \right) \right) _{(i,j)\in\mathbb{N}^2, j \geq i}$
\end{center}
(resp. of  Banach bundles  $\left( \left(  \mathcal{E}_{i},\mathbf{\pi}_{i},M_{i},\rho_{i},[.,.]_i \right),\left( \xi_{i}^{j}, \delta_{i}^{j} \right) \right) _{(i,j)\in\mathbb{N}^2, j \geq i}$).\\
A sequence of open fibred manifolds $\mathcal{M}_i $ of $\mathcal{E}_i$ is called compatible with  algebroid prolongations, if,  for all $\left( i,j \right) \in\mathbb{N}^2$ such that $j\geq i$, we have
\begin{description}
\item[\textbf{(PSPBLAB 1)}]
{\hfil $\xi_i^j(\mathcal{M}_j)\subset \mathcal{M}_i$;}
\item[\textbf{(PSPBLAB 2)}]
{\hfil $\mathbf{p}_i \circ \xi_i^j = \delta_i^j \circ \mathbf{p}_j$.}
\end{description}
\end{definition}

Under the assumptions of Definition \ref{D_ProjectiveSequenceofBanachLieAlgebroids}, for each $i\in \mathbb{N}$, we denote by  $(\mathbf{T}\mathcal{M}_i, \hat{\mathbf{p}}_i, \mathcal{M}_i, \hat{\rho}_i)$ the prolongation  of $\mathcal{A}_i$ over $\mathcal{M}_i$ and $[.,.]_{\bf{T}\mathcal{M}_i}$ the prolongation of the Lie bracket $[.,.]_{\mathcal{A}_i}$ on projectable sections of $\mathbf{T}\mathcal{M}_i$.

We then have the following result.

\begin{proposition}
\label{P_ProjectiveLimitProlongationBracket}
Consider a projective sequence of Banach-Lie algebroid  bundles  
\begin{center}
$\left( \left(  \mathcal{A}_{i},\pi_{i},M_{i},\rho_{i},[.,.]_{\mathcal{A}_i} \right),\left( \zeta_i^j, \xi_i^j, \delta_i^j \right) \right) _{(i,j)\in\mathbb{N}^2, j \geq i}$ 
\end{center}
(resp. Banach bundles $\left( \left(  \mathcal{E}_{i},\mathbf{\pi}_{i},M_{i},\rho_{i},[.,.]_i \right),\left( \xi_{i}^{j}, \delta_{i}^{j} \right) \right) _{(i,j)\in\mathbb{N}^2, j \geq i}$) and a sequence of open fibred manifolds $\mathcal{M}_i $ of $\mathcal{E}_i$ compatible with  algebroid prolongations. Then
\begin{enumerate}
\item 
$\left( \mathbf{T}\mathcal{M}=\underleftarrow{\lim}\mathbf{T}\mathcal{M}_i,\hat{\mathbf{p}}=\underleftarrow{\lim}\hat{\mathbf{p}}_i,M=\underleftarrow{\lim}\mathcal{M}_i,
\hat{\rho}=\underleftarrow{\lim}\hat{\rho}_i \right)  $ is a Fr\'{e}chet anchored bundle which is the prolongation of
$\left( \mathcal{A}=\underleftarrow{\lim}\mathcal{A}_i,\pi=\underleftarrow{\lim}\pi_i,M=\underleftarrow{\lim}M_i \right)$ over $M$.\\
\item 
Consider any of open $U$ in $M$  and a sequence  of open set $U_i$ in $M_i$ such that $U=\underleftarrow{\lim}U_i$.  We denote by $\mathfrak{P}^{pl}(\mathbf{T}\mathcal{M}_U)$ the $C^\infty(U)$-module generated by all projective limits $\mathbf{X}=\underleftarrow{\lim}\mathbf{X}_i$  of projectable sections  $\mathbf{X}_i$  of $\mathbf{T}\mathcal{M}_i$ over $\{\mathcal{M}_i\}_{U_i}$
 Then there exists a Lie bracket
  $[.,.]_{\mathbf{T}\mathcal{M}} $ defined on  $\mathfrak{P}^{pl}(\mathbf{T}\mathcal{M}_U)$ which  satisfies the assumptions of Definition \ref{D_AlmostLieBracketOnAnAnchoredBundle} characterized by
\[
[\mathbf{X}, \mathbf{X}']_{\mathbf{T}\mathcal{M}_U}=\underleftarrow{\lim}[\mathbf{X}_i,\mathbf{X}'_i]_{\mathbf{T}\mathcal{M}_i}
\]
where  $\mathbf{X}=\underleftarrow{\lim}\mathbf{X}_i$ and  $\mathbf{X}'=\underleftarrow{\lim}\mathbf{X}'_i$.\\
\end{enumerate}
\end{proposition}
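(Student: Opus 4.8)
The plan is to deduce both assertions from the finite-level prolongation theory already established in Theorem \ref{T_Prolongation} and Lemma \ref{L_extbracket}, combined with the functoriality of the prolongation under the bonding morphisms, and the results on projective limits of Banach bundles recalled in the appendix.

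For (1), I would first observe that the anchored-bundle morphisms $\mathbf{T}\xi_i^j:\mathbf{T}\mathcal{M}_j\to\mathbf{T}\mathcal{M}_i$ furnished by (\ref{eq_bfTPsi}) (applied to $\Psi=\xi_i^j$ and $\varphi=\zeta_i^j$) turn $(\mathbf{T}\mathcal{M}_i)_{i\in\mathbb{N}}$ into a projective system of Banach bundles over $(\mathcal{M}_i)_{i\in\mathbb{N}}$; conditions \textbf{(PSPBLAB 1)} and \textbf{(PSPBLAB 2)} guarantee that these maps are well defined and cover the $\xi_i^j$. The crux is the fibrewise identity $\underleftarrow{\lim}\,\mathbf{T}^{\mathcal{A}_i}_{m_i}\mathcal{M}_i=\mathbf{T}^{\mathcal{A}}_m\mathcal{M}$. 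Since the tangent functor commutes with these projective limits, one has $T\mathcal{M}=\underleftarrow{\lim}\,T\mathcal{M}_i$ and $T\mathbf{p}=\underleftarrow{\lim}\,T\mathbf{p}_i$, while $\rho=\underleftarrow{\lim}\,\rho_i$ by hypothesis; hence the defining constraint $\rho(a)=T\mathbf{p}(\mu)$ is precisely the limit of the constraints $\rho_i(a_i)=T\mathbf{p}_i(\mu_i)$, so a compatible family $(a_i,\mu_i)$ satisfies the limit equation if and only if every member does. The Fr\'echet bundle structure then follows from the cited appendix result on projective limits of Banach vector bundles, and $\hat{\rho}=\underleftarrow{\lim}\,\hat{\rho}_i$ is an anchor; that the resulting object is the prolongation of $\mathcal{A}=\underleftarrow{\lim}\,\mathcal{A}_i$ over $M$ is immediate from the fibrewise description just obtained.

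For (2), given $\mathbf{X}=\underleftarrow{\lim}\,\mathbf{X}_i$ and $\mathbf{X}'=\underleftarrow{\lim}\,\mathbf{X}'_i$ with each $\mathbf{X}_i,\mathbf{X}'_i$ projectable, I would first check that $\bigl([\mathbf{X}_i,\mathbf{X}'_i]_{\mathbf{T}\mathcal{M}_i}\bigr)_{i\in\mathbb{N}}$ is again a projective system of sections. This is where functoriality is essential: by Remark \ref{R_liftmorphism} each $\mathbf{T}\xi_i^j$ is a Lie morphism of partial convenient Lie algebroids, and the defining property of a projective limit of sections is precisely that the $\mathbf{X}_i$ are $\xi_i^j$-related; hence \textbf{(LM 2)} yields that $[\mathbf{X}_i,\mathbf{X}'_i]_{\mathbf{T}\mathcal{M}_i}$ and $[\mathbf{X}_j,\mathbf{X}'_j]_{\mathbf{T}\mathcal{M}_j}$ are $\xi_i^j$-related, so the limit $\underleftarrow{\lim}[\mathbf{X}_i,\mathbf{X}'_i]_{\mathbf{T}\mathcal{M}_i}$ exists and defines $[\mathbf{X},\mathbf{X}']_{\mathbf{T}\mathcal{M}_U}$. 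Because the bracket of two projectable sections is projectable (relation (\ref{eq_projectTMbracket})), each term is projectable and the limit lies in $\mathfrak{P}^{pl}(\mathbf{T}\mathcal{M}_U)$. I would then extend the bracket to the whole $C^\infty(U)$-module $\mathfrak{P}^{pl}(\mathbf{T}\mathcal{M}_U)$ by $\mathbb{R}$-bilinearity and the Leibniz rule exactly as in the proof of Lemma \ref{L_extbracket}(1); the verification of \textbf{(AL 1)} and \textbf{(AL 2)} reduces, after passage to the limit, to the corresponding properties known at each finite level, and the Jacobi identity descends from its validity on each $\mathfrak{P}(\mathbf{T}\mathcal{M}_i)$.

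I expect the main obstacle to be the coherence step in (2): showing that the finite-level brackets assemble into a genuine projective system and that the generated module $\mathfrak{P}^{pl}(\mathbf{T}\mathcal{M}_U)$ is stable under the resulting operation. This hinges on the Lie-morphism property of the bonding maps $\mathbf{T}\xi_i^j$ and on the fact that $\xi_i^j$-relatedness is preserved by the bracket only for projectable (equivalently $\xi$-related) sections, which is exactly why the statement is restricted to $\mathfrak{P}^{pl}$ rather than to all of $\Gamma(\mathbf{T}\mathcal{M}_U)$. A secondary technical point, handled by the appendix, is the compatibility of the $C^\infty(U)$-module structures and of differentiation with the projective limit, needed to know that the Leibniz extension and the $1$-jet dependence survive the passage to the limit.
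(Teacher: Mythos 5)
Your proposal is correct and follows essentially the same route as the paper's proof: both parts proceed by showing that the compatibility conditions \textbf{(PSPBLAB 1)--(PSPBLAB 2)} make the maps $\mathbf{T}\xi_i^j$ into a projective sequence of Banach anchored bundles whose limit is identified fibrewise with the prolongation of $\underleftarrow{\lim}\mathcal{A}_i$, then by using the Lie-morphism property of the bonding maps to see that brackets of $\xi_i^j$-related projectable sections are again $\xi_i^j$-related, and finally by extending the limit bracket to $\mathfrak{P}^{pl}(\mathbf{T}\mathcal{M}_U)$ exactly as in Lemma \ref{L_extbracket}. The only cosmetic difference is that you package the coherence step through Remark \ref{R_liftmorphism}, whereas the paper verifies the same relatedness by direct componentwise computation from (\ref{eq_projectTMbracket}); the underlying argument is identical.
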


Note that $\mathfrak{P}^{pl}(\mathbf{T}\mathcal{M}_U)$ is a submodule of the module $\mathfrak{P}(\mathbf{T}\mathcal{M}_U)$ generated by projectable sections $\mathbf{T}\mathcal{M}_U$. Therefore, by analog 
argument as used in the proof of Theorem \ref{T_PartialLieAlgebroid}, the set $\{\mathfrak{P}^{pl}(\mathbf{T}\mathcal{M}_U), U \textrm{ open set  in } M\}$ generates a sheaf of $\mathfrak{P}^{pl}_\mathcal{M}$ over $
\mathcal{M}$. Moreover,   for any open set $\mathcal{U}$ in $ \mathcal{M}$, according to Proposition \ref{P_ProjectiveLimitProlongationBracket}, the restriction of $\hat{\rho}$ to each $\mathfrak{P}^{pl}(\mathbf{T}\mathcal{U})$ is a Lie algebra morphism into the  Lie algebra morphism into the Lie algebra of vector fields on $\mathcal{U}$.
Thus we obtain:

\begin{theorem}
\label{T_ProjectiveLimitOfProlongationOfBanachAnchoredBundles}
Consider a projective sequence of Banach-Lie algebroid  bundles  
\begin{center}
$\left( \left(  \mathcal{A}_{i},\pi_{i},M_{i},\rho_{i},[.,.]_{\mathcal{A}_i} \right),\left( \zeta_i^j, \xi_i^j, \delta_i^j \right) \right) _{(i,j)\in\mathbb{N}^2, j \geq i}$
\end{center}
(resp. Banach bundles $\left( \left(  \mathcal{E}_{i},\mathbf{\pi}_{i},M_{i},\rho_{i},[.,.]_i \right),\left( \xi_{i}^{j}, \delta_{i}^{j} \right) \right) _{(i,j)\in\mathbb{N}^2, j \geq i}$) and a sequence of open fibred manifolds $\mathcal{M}_i $ of $\mathcal{E}_i$  compatible with  algebroid prolongations. Then $\left( \mathbf{T}\mathcal{M}=\underleftarrow{\lim}\mathbf{T}\mathcal{M}_i,\hat{\mathbf{p}}=\underleftarrow{\lim}\hat{\mathbf{p}}_i,M=\underleftarrow{\lim}\mathcal{M}_i,
\hat{\rho}=\underleftarrow{\lim}\hat{\rho}_i \right)  $ is a Fr\'{e}chet anchored bundle which is the prolongation of
$\left( \mathcal{A}=\underleftarrow{\lim}\mathcal{A}_i,\pi=\underleftarrow{\lim}\pi_i,M=\underleftarrow{\lim}M_i \right)$ over $M$. Moreover $\left(\mathbf{T}\mathcal{M},\hat{\bf p}, \mathcal{M}, \hat{\rho},\mathfrak{P}^{pl}_{\mathcal{M}}\right)$ is a strong  partial Fr\'echet Lie algebroid.
\end{theorem}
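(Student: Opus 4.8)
The plan is to collect the statement from results already at our disposal and then supply the one genuinely new ingredient, namely the \emph{strong} property. The assertion that $\mathbf{T}\mathcal{M}=\underleftarrow{\lim}\mathbf{T}\mathcal{M}_i$, equipped with $\hat{\mathbf{p}}=\underleftarrow{\lim}\hat{\mathbf{p}}_i$ and $\hat{\rho}=\underleftarrow{\lim}\hat{\rho}_i$, is a Fr\'echet anchored bundle and is precisely the prolongation of $\mathcal{A}=\underleftarrow{\lim}\mathcal{A}_i$ over $\mathcal{M}=\underleftarrow{\lim}\mathcal{M}_i$ is exactly Proposition \ref{P_ProjectiveLimitProlongationBracket} (1), which I would invoke verbatim. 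Proposition \ref{P_ProjectiveLimitProlongationBracket} (2) then provides, on each $\mathfrak{P}^{pl}(\mathbf{T}\mathcal{M}_U)$, a bracket $[.,.]_{\mathbf{T}\mathcal{M}}=\underleftarrow{\lim}[.,.]_{\mathbf{T}\mathcal{M}_i}$ satisfying the Leibniz identity \textbf{(AL 1)} and the $1$-jet dependence \textbf{(AL 2)} of Definition \ref{D_AlmostLieBracketOnAnAnchoredBundle}; these are precisely conditions (i) and (ii) of Definition \ref{D_PartialConvenientLieAlgebroid}. Condition (iii), that $\hat{\rho}$ restricts to a Lie algebra morphism into vector fields on each $\mathcal{U}$, and the fact that $\{\mathfrak{P}^{pl}(\mathbf{T}\mathcal{M}_U)\}$ generates a sub-sheaf $\mathfrak{P}^{pl}_{\mathcal{M}}$ of $\Gamma_{\mathcal{M}}$, are both recorded in the remark preceding the theorem.

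What remains for the partial-Lie-algebroid structure is the Jacobi identity, which I would obtain level by level. By Theorem \ref{T_PartialLieAlgebroid} each $(\mathbf{T}\mathcal{M}_i,\hat{\mathbf{p}}_i,\mathcal{M}_i,\hat{\rho}_i,\mathfrak{P}_{\mathcal{M}_i})$ is a strong partial Lie algebroid, so the jacobiator $J_{\mathbf{T}\mathcal{M}_i}$ of $[.,.]_{\mathbf{T}\mathcal{M}_i}$ vanishes, in particular on projectable sections. Since on the generators $\mathbf{X}=\underleftarrow{\lim}\mathbf{X}_i$ of $\mathfrak{P}^{pl}(\mathbf{T}\mathcal{M}_U)$ the bracket is the projective limit of the $[.,.]_{\mathbf{T}\mathcal{M}_i}$, the jacobiator of the limit bracket is the projective limit of the vanishing $J_{\mathbf{T}\mathcal{M}_i}$ and hence vanishes on the generators; the Leibniz rule then propagates the vanishing to all of $\mathfrak{P}^{pl}(\mathbf{T}\mathcal{M}_U)$, exactly as at the end of the proof of Lemma \ref{L_extbracket} (1). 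This equips $\mathfrak{P}^{pl}_{\mathcal{M}}$ with a Lie algebras sheaf structure and makes $(\mathbf{T}\mathcal{M},\hat{\mathbf{p}},\mathcal{M},\hat{\rho},\mathfrak{P}^{pl}_{\mathcal{M}})$ a partial Fr\'echet Lie algebroid.

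The main obstacle is the \emph{strong} condition, namely that for every $m\in\mathcal{M}$ the germs at $m$ of sections in $\mathfrak{P}^{pl}_{\mathcal{M}}$ evaluate onto the whole fibre $\hat{\mathbf{p}}^{-1}(m)=\mathbf{T}_m\mathcal{M}$; the difficulty is that $\mathfrak{P}^{pl}$ is generated only by \emph{projective limits} of projectable sections, a strictly smaller class than the one used at finite level, so Lemma \ref{L_extbracket} (2) cannot be applied directly. I would proceed as follows. Write $m=\underleftarrow{\lim}m_i$ and, using the identification of the limit fibre with $\underleftarrow{\lim}(\mathbb{A}_i\times\mathbb{E}_i)$ from Proposition \ref{P_ProjectiveLimitProlongationBracket} (1), fix $(a,v)=\underleftarrow{\lim}(a_i,v_i)\in\mathbf{T}_m\mathcal{M}$. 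I would then build, by induction on $i$, a compatible family of projectable sections $\mathbf{X}_i$ over a projective system $\{\mathcal{M}_{i,U_i}\}$ with $\mathbf{X}_i(m_i)=(a_i,v_i)$ and with $\mathbf{X}_i$ the image of $\mathbf{X}_{i+1}$ under the connecting morphism $\mathbf{T}\mathcal{M}_{i+1}\to\mathbf{T}\mathcal{M}_i$ of the projective system: the initial section exists by Lemma \ref{L_extbracket} (2), and the inductive step is a lifting of a projectable section through the connecting bundle morphism, which is solvable thanks to the compatibility axioms \textbf{(PSPBLAB 1)}, \textbf{(PSPBLAB 2)} and to the surjectivity of the connecting morphisms on spaces of projectable sections guaranteed by the projective-limit results recalled in the appendices. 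The limit $\mathbf{X}=\underleftarrow{\lim}\mathbf{X}_i$ then lies in $\mathfrak{P}^{pl}(\mathbf{T}\mathcal{M}_U)$ and satisfies $\mathbf{X}(m)=(a,v)$; as $(a,v)$ was arbitrary this yields the strong condition of Definition \ref{D_PartialConvenientLieAlgebroid} and completes the proof. The delicate point on which I would concentrate is exactly the solvability of this lifting at each stage, that is, ensuring that the inductively chosen sections remain mutually compatible and not merely that a realizing section exists level by level.
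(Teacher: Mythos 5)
Your assembly of the first parts is sound and coincides with the paper's own route: the anchored-bundle and prolongation statements are exactly Proposition \ref{P_ProjectiveLimitProlongationBracket} (1), the bracket on $\mathfrak{P}^{pl}(\mathbf{T}\mathcal{M}_U)$ is Proposition \ref{P_ProjectiveLimitProlongationBracket} (2) (whose proof already contains the Jacobi identity for the limit bracket, obtained levelwise just as you re-derive it), and the sheaf property together with condition (iii) of Definition \ref{D_PartialConvenientLieAlgebroid} is the paragraph preceding the theorem. The genuine gap is in your treatment of the \emph{strong} condition. Your inductive lifting scheme rests on ``the surjectivity of the connecting morphisms on spaces of projectable sections guaranteed by the projective-limit results recalled in the appendices'' --- but no such result exists in the appendices, and none is true in this generality: a projective sequence here does not assume surjective bonding maps $\xi_i^j$ (the only surjectivity-type hypothesis anywhere is the submersive assumption on the base maps $\delta_i^j$ in Theorem \ref{T_ProjectiveLimitOfBanachLieAlgebroids}, which says nothing about the fibre maps $\zeta_i^j$, $\lambda_i^j$). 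Even granting fibrewise surjectivity of $\mathbf{T}\xi_i^{i+1}$, at each stage you must choose a lift that is simultaneously projectable, takes the prescribed value $(a_{i+1},v_{i+1})$ at $m_{i+1}$, and pushes forward \emph{exactly} to the previously chosen $\mathbf{X}_i$ on a common neighbourhood; this is a nontrivial inverse-limit problem, and your closing sentence concedes that you have not solved it. As written, the strongness claim is unproven.

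The repair is much simpler, and it is the point you dismissed too quickly: Lemma \ref{L_extbracket} (2) \emph{does} apply directly, provided one works in a projective limit bundle chart. By Definition \ref{D_ProjectiveSequenceBanachVectorBundles} and condition \textbf{(PLBVB)}, around $m=\underleftarrow{\lim}m_i$ one has compatible trivializations in which the bonding map $\mathbf{T}\mathcal{M}_j\to\mathbf{T}\mathcal{M}_i$ reads $(\mathsf{x},\mathsf{e},\mathsf{a},\mathsf{z})\mapsto \left( \delta_i^j(\mathsf{x}),\lambda_i^j(\mathsf{e}),\zeta_i^j(\mathsf{a}),\lambda_i^j(\mathsf{z}) \right)$, i.e.\ base-point independent and linear on the fibre. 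Given $(a,v)=\underleftarrow{\lim}(a_i,v_i)\in\mathbf{T}_m\mathcal{M}$, the \emph{constant} local sections $\mathsf{X}_i(\mathsf{x},\mathsf{e})=(\mathsf{x},\mathsf{e},\mathsf{a}_i,\mathsf{v}_i)$ of Lemma \ref{L_extbracket} (2) are projectable at each level, and because $(a_i,v_i)$ is a thread and the bonding maps have the product form above, they are \emph{automatically} compatible: no lifting, no induction, no choice. Their projective limit belongs to $\mathfrak{P}^{pl}(\mathbf{T}\mathcal{M}_U)$ and evaluates to $(a,v)$ at $m$, which is exactly the strong condition of Definition \ref{D_PartialConvenientLieAlgebroid}. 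With this substitution for your final step, the proof closes; without it, the argument has a hole precisely where you located the difficulty.
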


\begin{remark}
\label{R_NotPartialLieAlgebroidProlongation} 
In general, since the projective limit of a projective sequence of Banach algebroids has only a structure of  partial Fr\'echet Lie algebroid, it follows that $\mathfrak{P}^{pl}_\mathcal{M}$  is a subsheaf of modules of $\mathfrak{P}_\mathcal{M}$ and the inclusion is strict in the sense that for each open  $\mathcal{U}$ in $\mathcal{M}$, the inclusion of $\mathfrak{P}^{pl}(\mathcal{U})$ in $\mathfrak{P}(\mathcal{U})$ is strict. Thus we do not have a structure of strong partial Fr\'echet Lie algebroid defined on  $\mathfrak{P}_\mathcal{M}$ as in Theorem \ref{T_PartialLieAlgebroid}.
\end{remark}

\begin{proof}[Proof of Proposition \ref{P_ProjectiveLimitProlongationBracket}]${}$\\
(1) According to \textbf{(PSPBLAB 1)} and Theorem \ref{T_ProjectiveLimitOfBanachLieAlgebroids},
$\left( \underleftarrow{\lim}\mathcal{A}_i,\underleftarrow{\lim}\pi_i,\underleftarrow{\lim}M_i ,\underleftarrow{\lim}\rho_i \right)$ is a Fr\'echet anchored bundle. \\
From \textbf{(PSPBLAB 2)} and Proposition \ref{P_ProjectiveLimitOfBanachVectorBundles}, we obtain a structure of Fr\'echet vector bundle on $\left( \underleftarrow{\lim}\mathcal{E}_i,\underleftarrow{\lim}\mathbf{p}_i,\underleftarrow{\lim}M_i  \right) $. Since each $\mathcal{M}_i$ is an open manifold of $\mathcal{E}_i$ such that the restriction of $\mathbf{p}_i$ is a surjective fibration of $\mathcal{M}_i$ over $M_i$ and we have $\xi_i^j(\mathcal{M}_j) \subset \mathcal{M}_i$, it follows that $(\mathcal{M}_i,\xi_i^j) _{(i,j)\in\mathbb{N}^2, j \geq i}$ is a projective sequence of Banach manifolds and so the restriction of $\mathbf{p}=\underleftarrow{\lim}\mathbf{p}_i$ to $\mathcal{M}=\underleftarrow{\lim}\mathcal{M}_i$ is a surjective fibration onto $M$\\

Recall that
\[
{ \bf T}\mathcal{M}_j
=\{ \left( a_j,\mu_j \right) \in \mathcal{A}_{x_j} \times T_{m_j}\mathcal{M}_j: \;
\rho_j \left( a_j \right) = T_{m_j}{\bf p}_j \left( \mu_j \right)
\}.
\]
Let $\left( a_j,\mu_j \right)$ be in ${ \bf T}{\mathcal{M}_j}$ and consider $a_i=\xi_{i}^{j} \left( a_j \right) $ and $\mu_i=T\xi_i^j\left( \mu_j \right) $. We then have:
\[
\rho_i\left( a_i \right) = \rho_i \circ \xi_{i}^{j} \left( a_j \right) = T\delta_i^j \circ \rho_j \left( a_j \right)
\]
and also
\[
T_{m_i}{\bf p}_i \left( \mu_i \right) = T_{m_i}{\bf p}_i \circ T\xi_i^j \left( \mu_j \right)
= T\delta_i^j \circ T_{m_j}{\bf p}_j \left( \mu_j \right).
\]
Since $\rho_j \left( a_j \right) = T_{m_j}{\bf p}_j \left( \mu_j \right)$, we then obtain $\rho_i \left( a_i \right) = T_{m_i}{\bf p}_i \left( \mu_i \right)$.

So $\mathbf{T}\xi_i^j : \mathbf{T} \mathcal{M}_j \to \mathbf{T} \mathcal{M}_i$ is a morphism of Banach bundles and we have the following commutative diagram
\[
\xymatrix{
           \mathbf{T} \mathcal{M}_i  \ar@{<-}[r]^{\mathbf{T}\xi_i^j}\ar[d]_{\hat{\rho}_i} & \mathbf{T} \mathcal{M}_j \ar[d]^{\hat{\rho}_j}\\
           T\mathcal{M}_i \ar@{<-}[r]^{T\xi_i^j}              & T\mathcal{M}_j\\
}
\]
We deduce that $\left( \left(  {\mathbf{T}}^{\mathcal{A}_i} \mathcal{M}_i,\hat{\mathbf{p}}_i,\mathcal{M}_i,\hat{\rho}_i \right),\left( \mathbf{T}\xi_i^j,\xi_{i}^{j}\right) \right) _{(i,j)\in\mathbb{N}^2, j \geq i}$ is a projective sequence of Banach anchored bundles. Applying again Theorem \ref{T_ProjectiveLimitOfBanachLieAlgebroids}, we get a Fr\'echet anchored bundle structure on $\left( \underleftarrow{\lim}\bf{T}\mathcal{M}_i,\underleftarrow{\lim}\hat{\bf{p}}_i,\underleftarrow{\lim}\mathcal{M}_i \right)$ over $\underleftarrow{\lim}\mathcal{M}_i$ and which  appears as the prolongation of $\left( \underleftarrow{\lim}\mathcal{A}_i,\underleftarrow{\lim}\pi_i,\underleftarrow{\lim}M_i ,\underleftarrow{\lim}\rho_i \right) $ over $\underleftarrow{\lim}\mathcal{M}_i $.\\

(2) Let  $U$ be an open set in $M$. There exists $U_i$ in $M_i$ such that $\delta_i(U)\subset U_i$ for each $i\in \mathbb{N}$ and so  that $U=\underleftarrow{\lim}U_i$. 
Now, from  the definition of $\{\mathcal{M}_i\}_{U_i}$, we must have $\mathcal{M}_U=\underleftarrow{\lim}\{\mathcal{M}_i\}_{U_i}$.\\
 Recall that a projectable section $\mathbf{X}_i$ on $\{\mathcal{M}_i\}_{U_i}$ is characterized by a pair $(\mathfrak{a}_i, X_i)$ where $\mathfrak{a}_i$  is a section of $\{\mathcal{A}_i\}_{U_i}$ and $X_i$ is a vector field on $\{\mathcal{M}_i\}_{U_i}$ such that $\rho_i\circ \mathfrak{a}_i=T{\mathbf{p}_i}(X_i)$.
  Assume that  $\mathfrak{a}=\underleftarrow{\lim}\mathfrak{a}_i$ and $X=\underleftarrow{\lim}X_i$, from the compatibility with bonding maps for sequences  of  sections $(\mathfrak{a}_i)$, anchors $(\rho_i)$ and vector fields $(X_i)$  we  must have then have  $T{\bf p}(X)=\rho\circ \mathfrak{a}$. But 
  from Theorem \ref{T_Prolongation}, it follows that 
 $(\mathfrak{a}\circ \mathbf{p}, X)$ defines a  projectable section $\mathbf{X}$ over  $\mathcal{M}_U$ and so $\mathbf{X}=\underleftarrow{\lim}\mathbf{X}_i$.\\

On the other hand, recall that from (\ref{eq_projectTMbracket}), for each $i\in \mathbb{N}$, we have 
\begin{eqnarray}
\label{Eq_BracketTMi}
[\mathbf{X}_i,\mathbf{X}'_i]_{{\bf T}\mathcal{M}_i}=([\mathfrak{a}_i,\mathfrak{a}'_i]_{\mathcal{A}_i}\circ {\bf p},[X_i,X'_i])
\end{eqnarray}

Now since $\xi_i^j$ is a Lie algebroid morphism, over  $\delta_i^j$, according to Definition \ref{D_LieMorphism} we have 
\begin{eqnarray}
\label{Eq_bracketfij}
\xi_i^j([\mathfrak{a}_j,\mathfrak{a}'_j)]_{\mathcal{A}_j})(x_j)=\left([\mathfrak{a}_i,\mathfrak{a}_i]_{\mathcal{A}_i}\right)(\delta_i^j(x_j)).
\end{eqnarray}
Since  $\delta_i^j\circ\mathbf{p}_j=\mathbf{p}_i\circ \lambda_i^j$, we have: 
\begin{eqnarray}
\label{Eq_bracketfij}
\xi_i^j([\mathfrak{a}_j,\mathfrak{a}'_j]_{\mathcal{A}_j})\circ\mathbf{p}_j(m_j)=\left([(\mathfrak{a}_i,\mathfrak{a}'_i]_{\mathcal{A}_i}\right)\circ \mathbf{p}_i\circ \xi_i^j(m_j).
\end{eqnarray}

Naturally, since $X_i$ (resp. $X'_i$) and $X_j$ (resp. $X'_j$) are $\xi_i^j$ related, we also have
\begin{eqnarray}\label{Eq_bracketxiij}
[X_i,X'_i)]\left(\xi_i^j(m_j)\right)=T\xi_i^j\left([X_j,X_j]\right)(m_j).
\end{eqnarray}
From  (\ref{Eq_BracketTMi})  and (\ref{eq_bfTPsi}) we then obtain
\begin{eqnarray}\label{Eq_BracketbfTM}
\begin{aligned}
\mathbf{T}\xi_i^j\left([\mathbf{X}_i,\mathbf{X}^\prime_j]_{\mathbf{T}\mathcal{M}_i}\right)(m_j)
&=\left(\xi_i^j([\mathfrak{a}_j,\mathfrak{a}^\prime_j]_{\mathcal{A}_j})\circ\mathbf{p}_j(m_j),T_{m_j}\xi_i^j([{X}_i,{X}^\prime_j])\right)\\
&=\left([\mathfrak{a}_i,\mathfrak{a}^\prime_i]_{\mathcal{A}_i}\circ\mathbf{p}_i\circ \xi_i^j(m_j),[{X}_i,X^\prime_i])\circ \xi_i^j(m_j)\right)\\ 
&=\left([\mathbf{X}_i,\mathbf{X}_i^\prime]_{\mathbf{T}\mathcal{M}_i}\right)\circ\xi_i^j(m_j).
\end{aligned}
\end{eqnarray}

It follows that we can define:
\begin{eqnarray}\label{Eq_ProjectiveLimit bracketTMi}
[\mathbf{X},\mathbf{X}']_{\mathbf{T}\mathcal{M}_U}=\underleftarrow{\lim}[\mathbf{X}_i,\mathbf{X}'_i]_{\mathbf{T}\mathcal{M}_i}.
\end{eqnarray}

Now, since each bracket $[.,.]_{\mathbf{T}\mathcal{M}_i}$ satisfies the Jacobi identity, from $(\ref{Eq_ProjectiveLimit bracketTMi})$, the same is true for $[.,.]_{\mathbf{T}\mathcal{M}}$ on projective limit $\mathbf{X}$ and $\mathbf{X'}$ of projective sections $(\mathbf{X}_i)$ and $(\mathbf{X}'_i)$.
Finally, as 
\begin{eqnarray}\label{Eq_compatibilityrho}
[\hat{\rho}_i(\mathbf{X}_i),\hat{\rho}_i\mathbf{X}'_i)]_{\mathbf{T}\mathcal{M}_i}=\hat{\rho}_i\left([\mathbf{X}_i,\mathbf{X}_i]_{\mathbf{T}\mathcal{M}_i}\right).
\end{eqnarray}
from  the compatibility with bonding maps for sequences  of  sections $(\mathfrak{a}_i)$, anchors $(\rho_i)$  vector fields $(X_i)$  and  Lie brackets $[.,.]_{\mathbf{T}\mathcal{M}_i}$ on projective sequence $(\mathbf{T}\mathcal{M}_i)$, it follows that $\hat{\rho}$ satisfies the  same type of relation as (\ref{Eq_compatibilityrho}).\\

Now using by same arguments as used in the proof of Lemma \ref{L_extbracket} we show that we can extend this bracket to a Lie bracket on the module $\mathfrak{P}^{pl}(\mathbf{T}\mathcal{M}_U)$ so that we have a Lie algebra and the restriction of $\hat{\rho}$ to this Lie algebra is a morphism of Lie algebra into the Lie algebra of vector fields on $\mathcal{M}_U$.
\end{proof}

\section{Direct limits of prolongations of Banach Lie algebroids}
\label{__DirectLimitsOfProlongationsOfBanachAnchoredBundles}

As in the previous section we introduce:

\begin{definition}
\label{D_DirectSequenceofBanachLieAlgebroids}
Consider a direct  sequence of Banach-Lie algebroid  bundles
\begin{center}
$\left( \left(  \mathcal{A}_{i},\pi_{i},M_{i},\rho_{i} ,[.,.]_{\mathcal{A}_i}\right),\left( \eta_i^j, \chi_i^j, \varepsilon_i^j \right) \right) _{(i,j)\in\mathbb{N}^2, i \leq j}$
\end{center} 
(resp. of  Banach bundles  $\left( \left(  \mathcal{E}_{i},\mathbf{\pi}_{i},M_{i} \right),\left( \chi_i^j, \delta_i^j \right) \right) _{(i,j)\in\mathbb{N}^2, i\leq j}$).\\
A sequence of open fibred manifolds $\mathcal{M}_i $ of $\mathcal{E}_i$ is called compatible with algebroid prolongations, if  for all $\left( i,j \right) \in\mathbb{N}^2$ such that $i\leq j$, we have
\begin{description}
\item[\textbf{(DSPBLAB 1)}]
{\hfil $\chi_i^j(\mathcal{M}_j)\subset \mathcal{M}_i$;}
\item[\textbf{(DSPBLAB 2)}]
{\hfil $\varepsilon_i^j \circ \mathbf{p}_i  =  \mathbf{p}_j \circ \chi_i^j$}.
\end{description}
\end{definition}

\begin{remark} 
The context of direct limit in which we work concerns ascending sequences of Banach manifolds $(M_i)_{i\in \mathbb{N}}$ where $M_i$ is a closed submanifold of $M_{i+1}$.  The reason of this assumption is essentially that their direct limit has a natural structure of (n.n.H) convenient manifold. \\
Although each manifold $\mathcal{M}_i$ is open in $\mathcal{E}_i$, since $\mathcal{E}_i$ is a closed subbundle of $\mathcal{E}_j$, it follows that $ \left( \mathcal{M}_i,\chi_i^j \right) _{(i,j)\in\mathbb{N}^2, i \leq j}$ is an ascending sequence of convenient manifolds.
\end{remark}

As in the previous section, for each $i\in \mathbb{N}$, we denote by  $(\mathbf{T}\mathcal{A}_i, \hat{\mathbf{p}}_i, \mathcal{A}_i, \hat{\rho}_i)$ the prolongation  of $\mathcal{A}_i$ over $\mathcal{M}_i=\mathcal{A}_i$ and $[.,.]_{\mathbf{T}\mathcal{A}_i}$ the prolongation of the Lie bracket $[.,.]_{\mathcal{A}_i}$ on projectable section of $\mathbf{T}\mathcal{A}_i$.\\

Adapting the argument used in proof of Proposition  \ref{P_ProjectiveLimitProlongationBracket} to this setting of strong asending sequences and direct limits, we have the result below. Note that, in this context, the prolongation is  not Hausdorff in general. However, all the arguments used in the proofs are local and so they still work in this context.

\begin{proposition}
\label{P_DirectLimitProlongationBracket}
Consider a direct sequence of Banach-Lie algebroid bundles
\begin{center}  
$\left( \left(  \mathcal{A}_{i},\pi_{i},M_{i},\rho_{i},[.,.]_{\mathcal{A}_i} \right) ,
\left( \eta_i^j,\xi_i^j, \varepsilon_i^j \right) \right) _{(i,j)\in\mathbb{N}^2, i \leq j}$ 
\end{center}
(resp. Banach bundles $ \left( \left( \mathcal{E}_{i},\mathbf{\pi}_{i},M_{i},\rho_{i},[.,.]_i \right) , \left( \xi_i^j, \varepsilon_i^j \right) \right) _{(i,j)\in\mathbb{N}^2, i \leq j}$) and a sequence of open fibred manifolds $\mathcal{M}_i $ of $\mathcal{E}_i$  compatible with  algebroid prolongations. Then
\begin{enumerate}
\item 
$\left( \underrightarrow{\lim}\mathbf{T}\mathcal{M}_i,\underrightarrow{\lim}\hat{\mathbf{p}}_i,\underrightarrow{\lim}\mathcal{M}_i,
\underleftarrow{\lim}\hat{\rho}_i \right)  $ is a convenient  anchored bundle which is the prolongation of
$\left( \underrightarrow{\lim}\mathcal{A}_i,\underrightarrow{\lim}\pi_i,M_i \right)$ over $\underrightarrow{\lim}\mathcal{M}_i$.
\item 
Consider any open set $U$ in $M$  and a sequence  of open sets $U_i$ in $M_i$ such that $U=\underrightarrow{\lim}U_i$.  We denote by $\mathfrak{P}^{dl}(\mathbf{T}\mathcal{M}_U)$ the $C^\infty(U)$-module generated by all direct limits $\mathbf{X}=\underrightarrow{\lim}\mathbf{X}_i$  of projectable sections  $\mathbf{X}_i$  of $\mathbf{T}\mathcal{M}_i$ over $\{\mathcal{M}_i\}_{U_i}$.\\
Then there exists a Lie bracket $[.,.]_{\mathbf{T}\mathcal{M_U}} $ defined on  $\mathfrak{P}^{pl}(\mathbf{T}\mathcal{M}_U)$ which  satisfies the assumptions of Definition \ref{D_AlmostLieBracketOnAnAnchoredBundle} characterized by
\[
[\mathbf{X}, \mathbf{X}']_{\mathbf{T}\mathcal{M}_U}=\underrightarrow{\lim}[\mathbf{X}_i,\mathbf{X}'_i]_{\mathbf{T}\mathcal{M}_i}
\]
where  $\mathbf{X}=\underrightarrow{\lim}\mathbf{X}_i$ and  $\mathbf{X}'=\underrightarrow{\lim}\mathbf{X}'_i$.\\
\end{enumerate}
\end{proposition}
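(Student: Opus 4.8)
The plan is to transpose, step by step, the proof of Proposition \ref{P_ProjectiveLimitProlongationBracket}, replacing the projective limits $\underleftarrow{\lim}$ by the direct limits $\underrightarrow{\lim}$ and invoking the direct-limit counterparts (recalled in the appendices) of the results used there, in place of Theorem \ref{T_ProjectiveLimitOfBanachLieAlgebroids} and Proposition \ref{P_ProjectiveLimitOfBanachVectorBundles}. The one conceptual difference — that $\underrightarrow{\lim}\mathcal{M}_i$ is only an (n.n.H.) convenient manifold, so the prolongation need not be Hausdorff — is handled by observing at the outset that the fibred-product definition of $\mathbf{T}^{\mathcal{A}}\mathcal{M}$, the anchor $\hat{\rho}$, and the bracket (\ref{eq_projectTMbracket}) are all local over charts of $M$ and $\mathcal{M}$; the non-Hausdorff phenomena are purely global and never enter these verifications.

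For (1) I would first use \textbf{(DSPBLAB 1)}, together with the hypothesis (recorded in the remark preceding the statement) that each $\mathcal{E}_i$ is a closed subbundle of $\mathcal{E}_j$, to see that $(\mathcal{M}_i,\chi_i^j)$ is an ascending sequence of Banach manifolds, so that $\mathbf{p}=\underrightarrow{\lim}\mathbf{p}_i$ restricts to a surjective fibration of $\mathcal{M}=\underrightarrow{\lim}\mathcal{M}_i$ over $M=\underrightarrow{\lim}M_i$; \textbf{(DSPBLAB 2)} ensures compatibility of these fibrations with the bonding maps. The heart of this part is to check that the prolonged maps $\mathbf{T}\chi_i^j$ of (\ref{eq_bfTPsi}) send $\mathbf{T}\mathcal{M}_i$ into $\mathbf{T}\mathcal{M}_j$: given $(a_i,\mu_i)\in\mathbf{T}\mathcal{M}_i$ with $\rho_i(a_i)=T\mathbf{p}_i(\mu_i)$, one sets $a_j=\eta_i^j(a_i)$ and $\mu_j=T\chi_i^j(\mu_i)$ and combines the anchor-compatibility $\rho_j\circ\eta_i^j=T\varepsilon_i^j\circ\rho_i$ of the Lie algebroid morphism $\eta_i^j$ with \textbf{(DSPBLAB 2)} to obtain $\rho_j(a_j)=T\mathbf{p}_j(\mu_j)$, exactly as in the projective case with the arrows reversed. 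This produces a commutative square
\[
\xymatrix{
           \mathbf{T} \mathcal{M}_i  \ar[r]^{\mathbf{T}\chi_i^j}\ar[d]_{\hat{\rho}_i} & \mathbf{T} \mathcal{M}_j \ar[d]^{\hat{\rho}_j}\\
           T\mathcal{M}_i \ar[r]^{T\chi_i^j}              & T\mathcal{M}_j\\
}
\]
so that $\left(\mathbf{T}\mathcal{M}_i,\mathbf{T}\chi_i^j\right)$ is an ascending sequence of Banach anchored bundles; the direct-limit analogue of Theorem \ref{T_ProjectiveLimitOfBanachLieAlgebroids} then yields a convenient anchored bundle $\underrightarrow{\lim}\mathbf{T}\mathcal{M}_i$ and identifies it, via its defining fibred product, with the prolongation of $\underrightarrow{\lim}\mathcal{A}_i$ over $\mathcal{M}$, with anchor $\underrightarrow{\lim}\hat{\rho}_i$.

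For (2) I would first verify that a compatible family of projectable sections passes to the limit: writing $\mathbf{X}_i=(\mathfrak{a}_i,X_i)$ with $\rho_i\circ\mathfrak{a}_i=T\mathbf{p}_i(X_i)$, the limits $\mathfrak{a}=\underrightarrow{\lim}\mathfrak{a}_i$ and $X=\underrightarrow{\lim}X_i$ still satisfy $\rho\circ\mathfrak{a}=T\mathbf{p}(X)$, so by Theorem \ref{T_Prolongation} they define a projectable section $\mathbf{X}=\underrightarrow{\lim}\mathbf{X}_i$ of $\mathbf{T}\mathcal{M}$ over $\mathcal{M}_U$. The decisive point is that the bonding maps intertwine the level brackets: since each $\mathbf{T}\chi_i^j$ is, by Remark \ref{R_liftmorphism}, a Lie morphism of partial convenient Lie algebroids, applying \textbf{(LM 2)} to the defining formula (\ref{eq_projectTMbracket}) gives, for limits of projectable sections,
\[
\mathbf{T}\chi_i^j\left([\mathbf{X}_i,\mathbf{X}'_i]_{\mathbf{T}\mathcal{M}_i}\right)=[\mathbf{X}_j,\mathbf{X}'_j]_{\mathbf{T}\mathcal{M}_j}\circ\chi_i^j,
\]
which makes the formula $[\mathbf{X},\mathbf{X}']_{\mathbf{T}\mathcal{M}_U}=\underrightarrow{\lim}[\mathbf{X}_i,\mathbf{X}'_i]_{\mathbf{T}\mathcal{M}_i}$ well posed. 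The Jacobi identity and the morphism property of $\hat{\rho}$ then descend level-by-level to the limit, and I would finish, exactly as in Lemma \ref{L_extbracket}(1), by extending the bracket from limits of projectable sections to the full $C^\infty(U)$-module $\mathfrak{P}^{dl}(\mathbf{T}\mathcal{M}_U)$ through the Leibniz identity, using that the value at each point depends only on the $1$-jets of the arguments.

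The step I expect to be the main obstacle is not any isolated computation but the systematic check that all the relevant data — total spaces, anchors, projectable sections, and brackets — form genuine ascending systems compatible with $(\eta_i^j,\chi_i^j,\varepsilon_i^j)$, so that the direct limits exist and represent the intended prolongation, and that the induced convenient structure on $\mathbf{T}\mathcal{M}$ coincides with the prolongation structure. What makes this tractable, and what I would emphasize, is that every such verification localizes over a chart, where it reduces to the Banach statements already established in Theorem \ref{T_Prolongation} and Lemma \ref{L_extbracket}; the possible non-Hausdorffness of $\underrightarrow{\lim}\mathcal{M}_i$ intervenes only at the global level and leaves these local arguments untouched.
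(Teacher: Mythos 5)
Your proposal is correct and follows essentially the same route as the paper, which proves this proposition simply by declaring that the projective-limit argument of Proposition \ref{P_ProjectiveLimitProlongationBracket} adapts to ascending sequences, with the single caveat---which you also identify---that the possibly non-Hausdorff direct limit causes no trouble because every verification (the fibred-product description of $\mathbf{T}^{\mathcal{A}}\mathcal{M}$, the anchor compatibility, and the bracket formula (\ref{eq_projectTMbracket})) is local over charts. If anything, your write-up is more careful than the paper's: you correctly reverse the bonding-map arrows (using $\eta_i^j$, $\chi_i^j$ from level $i$ to level $j$, consistent with \textbf{(DSPBLAB 2)}, where the paper's \textbf{(DSPBLAB 1)} contains a direction typo) and you make explicit the chain $\rho_j(a_j)=T\varepsilon_i^j\circ\rho_i(a_i)=T\mathbf{p}_j(T\chi_i^j(\mu_i))$ and the intertwining of brackets via Remark \ref{R_liftmorphism}, both of which the paper leaves implicit.
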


As in the context of Projective sequences, $\mathfrak{P}^{dl}(\mathbf{T}\mathcal{M}_U)$ is a submodule of the module $\mathfrak{P}(\mathbf{T}\mathcal{M}_U)$ generated by projectable sections $\mathbf{T}\mathcal{M}_U$. Therefore,  again by analog 
argument as used in the proof of Theorem \ref{T_PartialLieAlgebroid},   the set $\{\mathfrak{P}^{dl}(\mathbf{T}\mathcal{M}_U), U \textrm{ open set  in } M\}$ generates a sheaf of $\mathfrak{P}^{dl}_\mathcal{M}$ over $\mathcal{M}$. Moreover,  for any open set $\mathcal{U}$ in $ \mathcal{M}$, according to Proposition \ref{P_DirectLimitProlongationBracket}, the restriction of $\hat{\rho}$ to each $\mathfrak{P}^{dl}(\mathbf{T}\mathcal{U})$ is a Lie algebra morphism into the Lie algebra morphism into the Lie algebra of vector fields on $\mathcal{U}$. Thus we obtain:

\begin{theorem}
\label{T_DirectLimitOfProlongationOfBanachAnchoredBundles}
Consider a direct sequence of Banach-Lie algebroid  bundles  
\begin{center}
$\left( \left(  \mathcal{A}_{i},\pi_{i},M_{i},\rho_{i},[.,.]_{\mathcal{A}_i} \right),\left( \eta_i^j, \xi_{i}^{j}, \varepsilon_{i}^{j} \right) \right) _{(i,j)\in\mathbb{N}^2, j \geq i}$ 
\end{center}
(resp. Banach bundles $\left( \left(  \mathcal{E}_{i},\mathbf{\pi}_{i},M_{i},\rho_{i},[.,.]_i \right),\left( \xi_i^j, \varepsilon_i^j \right) \right) _{(i,j)\in\mathbb{N}^2, i \leq j}$) and a sequence of open fibred manifolds $\mathcal{M}_i $ of $\mathcal{E}_i$    compatible with  algebroid prolongations.
 Then  $\left( \underrightarrow{\lim}\mathbf{T}\mathcal{M}_i,\underrightarrow{\lim}\hat{\mathbf{p}}_i,\underrightarrow{\lim}\mathcal{M}_i,
\underleftarrow{\lim}\hat{\rho}_i \right)  $ is a convenient  anchored bundle which is the prolongation of
$\left( \underrightarrow{\lim}\mathcal{A}_i,\underrightarrow{\lim}\pi_i,M_i \right)$ over $\underrightarrow{\lim}\mathcal{M}_i$. Moreover $\left(\mathbf{T}\mathcal{M},\hat{\bf p}, \mathcal{M}, \hat{\rho},\mathfrak{P}^{dl}_{\mathcal{M}}\right)$ is a strong  partial convenient Lie algebroid.
\end{theorem}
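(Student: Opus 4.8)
The plan is to derive the statement from Proposition \ref{P_DirectLimitProlongationBracket} by exactly the packaging that turns Proposition \ref{P_ProjectiveLimitProlongationBracket} into Theorem \ref{T_ProjectiveLimitOfProlongationOfBanachAnchoredBundles}. First I would record that the first assertion — that $\left(\underrightarrow{\lim}\mathbf{T}\mathcal{M}_i,\underrightarrow{\lim}\hat{\mathbf{p}}_i,\underrightarrow{\lim}\mathcal{M}_i,\hat{\rho}\right)$ is a convenient anchored bundle coinciding with the prolongation of $\underrightarrow{\lim}\mathcal{A}_i$ over $\underrightarrow{\lim}\mathcal{M}_i$ — is precisely part (1) of Proposition \ref{P_DirectLimitProlongationBracket}. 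Here the bonding system is $\left(\mathbf{T}\mathcal{M}_i,\mathbf{T}\chi_i^j\right)$ obtained by applying (\ref{eq_bfTPsi}) to the anchored-bundle morphisms $\eta_i^j$, $\chi_i^j$ over $\varepsilon_i^j$ of Definition \ref{D_DirectSequenceofBanachLieAlgebroids}; the commutative squares relating each $\hat{\rho}_i$ to $\mathbf{T}\chi_i^j$ identify the limit anchored bundle with the prolongation through the universal property of the direct limit, and $\hat{\rho}$ is the induced limit anchor.

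For the \emph{moreover} part I would assemble the partial-Lie-algebroid data. By Proposition \ref{P_DirectLimitProlongationBracket} (2), on each module $\mathfrak{P}^{dl}(\mathbf{T}\mathcal{M}_U)$ the bracket $[.,.]_{\mathbf{T}\mathcal{M}_U}=\underrightarrow{\lim}[.,.]_{\mathbf{T}\mathcal{M}_i}$ satisfies the axioms of Definition \ref{D_AlmostLieBracketOnAnAnchoredBundle}, hence the Leibniz condition (i) and the $1$-jet dependence (ii) of Definition \ref{D_PartialConvenientLieAlgebroid}; since every $[.,.]_{\mathbf{T}\mathcal{M}_i}$ obeys the Jacobi identity and the limit bracket is their direct limit, the Jacobi identity passes to $[.,.]_{\mathbf{T}\mathcal{M}_U}$, while the relation $\hat{\rho}_i\left([\mathbf{X}_i,\mathbf{X}'_i]_{\mathbf{T}\mathcal{M}_i}\right)=[\hat{\rho}_i(\mathbf{X}_i),\hat{\rho}_i(\mathbf{X}'_i)]$ survives in the limit and yields condition (iii). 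As noted just before the statement, $\mathfrak{P}^{dl}$ is a submodule of $\mathfrak{P}$, the family $\{\mathfrak{P}^{dl}(\mathbf{T}\mathcal{M}_U)\}_U$ is stable under restriction and so generates a sheaf of Lie algebras $\mathfrak{P}^{dl}_\mathcal{M}$; together with $\hat{\rho}$ this is a partial convenient Lie algebroid in the sense of Definition \ref{D_PartialConvenientLieAlgebroid}.

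The only remaining point is strongness, namely that the stalk $\mathfrak{P}^{dl}_m$ equals $\hat{\mathbf{p}}^{-1}(m)=\mathbf{T}_m\mathcal{M}$ for every $m=(x,e)$. Since the sequence is ascending with closed embeddings, the fibre is an increasing union $\mathbf{T}_m\mathcal{M}=\underrightarrow{\lim}\mathbf{T}_m\mathcal{M}_i=\bigcup_i\mathbf{T}_m\mathcal{M}_i$, so any $(m,v)$ already lies in $\mathbf{T}_m\mathcal{M}_{i_0}$ for some $i_0$. Working in the local chart of Theorem \ref{T_Prolongation}, I would realize $v$ by the constant projectable section furnished by Lemma \ref{L_extbracket} (2) at level $i_0$, and then continue upward by the constant sections carrying the images of this value under the fibre bonding maps; because constant sections are automatically $\mathbf{T}\chi_{i_0}^j$-related, this produces a compatible family $(\mathbf{X}_i)_{i\geq i_0}$ whose direct limit $\mathbf{X}=\underrightarrow{\lim}\mathbf{X}_i$ lies in $\mathfrak{P}^{dl}$ and satisfies $\mathbf{X}(m)=(m,v)$. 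Hence the stalks of $\mathfrak{P}^{dl}_\mathcal{M}$ are the full fibres and the partial Lie algebroid is strong.

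The delicate point, exactly as in the projective case, is the compatible-family construction across the bonding maps, so I expect the main obstacle to be checking that the upward extension preserves $\mathbf{T}\chi_i^j$-relatedness, together with the fact that $\mathbf{T}\mathcal{M}$ is in general only a (non-Hausdorff) convenient manifold. Both difficulties are defused by the observation the text stresses, that all constructions are local: in a trivializing chart the constant-section choice makes relatedness automatic, and non-Hausdorffness is irrelevant to the pointwise verification of strongness. This reduces the theorem entirely to Proposition \ref{P_DirectLimitProlongationBracket} and the local analog of Lemma \ref{L_extbracket}.
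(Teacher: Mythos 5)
Your proposal is correct and follows essentially the same route as the paper, which obtains the theorem by combining Proposition \ref{P_DirectLimitProlongationBracket} with the sheaf-generation and Lie-algebra-morphism argument carried over from the proof of Theorem \ref{T_PartialLieAlgebroid}. In fact you make explicit the one step the paper leaves implicit in its ``analog argument'' remark — the verification of strongness via direct limits of constant projectable sections in compatible charts, i.e.\ the direct-limit adaptation of Lemma \ref{L_extbracket}~(2) — and your treatment of it is sound.
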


\appendix

\section{Projective limits}
\label{_ProjectiveLimits}

\subsection{Projective limits of topological spaces}
\label{__ProjectiveLimitsOfTopologicalSpaces}

\begin{definition}
\label{D_ProjectiveSequenceTopologicalSpaces}
A projective sequence of topological spaces\index{projective sequence!of topological spaces} is a sequence\\
 $\left( \left(  X_{i},\delta_{i}^{j}\right) \right)_{(i,j) \in \mathbb{N}^2,\; j \geq i}$ where

\begin{description}
\item[\textbf{(PSTS 1)}]
For all $i\in\mathbb{N},$ $X_{i}$ is a topological space;

\item[\textbf{(PSTS 2)}]
For all $\left( i,j \right)\in\mathbb{N}^2$ such that $j\geq i$,
$\delta_{i}^{j}:X_{j}\to X_{i}$ is a continuous map;

\item[\textbf{(PSTS 3)}]
For all $i\in\mathbb{N}$, $\delta_{i}^{i}={Id}_{X_{i}}$;

\item[\textbf{(PSTS 4)}]
For all $\left( i,j,k \right)\in\mathbb{N}^3$ such that $k \geq j \geq i$, $\delta_{i}^{j}\circ\delta_{j}^{k}=\delta_{i}^{k}$.
\end{description}
\end{definition}

\begin{notation}
\label{N_ProjectiveSequence}
For the sake of simplicity, the projective sequence $\left( \left(  X_{i},\delta_{i}^{j}\right) \right)_{(i,j) \in \mathbb{N}^2,\; j \geq i}$ will be denoted $\left(  X_{i},\delta_{i}^{j} \right) _{j\geq i}$.
\end{notation}

An element $\left(  x_{i}\right)  _{i\in\mathbb{N}}$ of the product
${\displaystyle\prod\limits_{i\in\mathbb{N}}}X_{i}$ is called a \emph{thread}\index{thread} if, for all $j\geq i$, $\delta_{i}^{j}\left(  x_{j}\right)=x_{i}$.

\begin{definition}
\label{D_ProjectiveLimitOfASequence}
The set $X=\underleftarrow{\lim}X_{i}$\index{$X=\underleftarrow{\lim}X_{i}$} of all threads, endowed with the finest topology for which all the projections $\delta_{i}:X\to X_{i} $ are continuous, is called the projective limit of the sequence\index{projective limit!of a sequence} $\left(  X_{i},\delta_{i}^{j} \right) _{j\geq i}$.
\end{definition}

A basis\index{basis!of a topology} of the topology of $X$ is constituted by the subsets $\left( \delta_{i} \right)  ^{-1}\left(  U_{i}\right)  $ where $U_{i}$ is an open subset of $X_{i}$ (and so $\delta_i$ is open whenever $\delta_i$ is surjective).

\begin{definition}
\label{D_ProjectiveSequenceMappings}
Let $\left(  X_{i},\delta_{i}^{j} \right)  _{j\geq i}$ and $\left(  Y_{i},\gamma_{i}^{j} \right)  _{j\geq i}$ be two projective sequences whose respective projective limits are $X$ and $Y$.

A sequence $\left(  f_{i}\right)  _{i\in\mathbb{N}}$ of continuous mappings $f_{i}:X_{i}\to Y_{i}$, satisfying, for all $(i,j) \in \mathbb{N}^2,$ $j \geq i,$ the coherence condition\index{coherence condition}
\[
\gamma_{i}^{j}\circ f_{j}=f_{i}\circ\delta_{i}^{j}%
\]
is called a projective sequence of mappings\index{projective sequence!of mappings}.
\end{definition}

The projective limit of this sequence is the mapping
\[
\begin{array}
[c]{cccc}%
f: & X & \to & Y\\
& \left(  x_{i}\right)  _{i\in\mathbb{N}} & \mapsto & \left(  f_{i}\left(
x_{i}\right)  \right)  _{i\in\mathbb{N}}%
\end{array}
\]

The mapping $f$ is continuous if all the $f_{i}$ are continuous (cf. \cite{AbMa}).

\subsection{Projective limits of Banach spaces}
\label{__ProjectiveLimitsOfBanachSpaces}
Consider a projective sequence $\left(  \mathbb{E}_{i},\delta_{i}^{j} \right)  _{j\geq i}$ of Banach spaces.
\begin{remark}
\label{R_ProjectiveSequenceOfBondingsMapsBetweenBanachSpacesDeterminedByConsecutiveRanks}
Since we have a countable sequence of Banach spaces, according to the properties of bonding maps, the sequence  $\left( \delta_i^j\right)_{(i,j)\in \mathbb{N}^2, \;j\geq i}$ is well defined by the sequence of bonding maps $\left( \delta_i^{i+1}\right) _{i\in \mathbb{N}}$.
\end{remark}

\subsection{Projective limits of differential maps}
\label{__ProjectiveLimitsOfDifferentialMapsBetweenFrechetSpaces}
The following proposition (cf. \cite{Gal}, Lemma 1.2 and \cite{BCP}, Chapter 4) is essential 
\begin{proposition} 
\label{P_ProjectiveLimitsOfDifferentialMaps}
Let $\left( \mathbb{E}_i,\delta_i^j \right) _{j\geq i}$ be a projective sequence of Banach spaces whose projective limit is the Fréchet space $\mathbb{F}=\underleftarrow{lim} \mathbb{E}_i$ and $ \left( f_i : \mathbb{E}_i \to \mathbb{E}_i  \right) _{i \in \mathbb{N}} $ a projective sequence of differential maps whose projective limit is $f=\underleftarrow{\lim} f_i$.
Then the following conditions hold:
\begin{enumerate}
\item
$f$ is smooth in the convenient sense (cf. \cite{KrMi})
\item
For all $x = \left( x_i \right) _{i \in \mathbb{N}}$, $df_x = \underleftarrow{\lim} { \left( df_i \right) }_{x_i} $.
\item
$df = \underleftarrow{\lim}df_i$.
\end{enumerate}
\end{proposition}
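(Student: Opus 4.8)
The plan is to exploit the fundamental characterization of convenient smoothness: a map between convenient vector spaces is smooth if and only if it carries smooth curves to smooth curves (cf. \cite{KrMi}). First I would record two structural facts about the Fr\'echet space $\mathbb{F}=\underleftarrow{\lim}\mathbb{E}_i$. Being a countable projective limit of Banach spaces, it is a closed subspace of $\prod_{i}\mathbb{E}_i$ and hence a convenient vector space. Moreover, since the projections $\delta_i:\mathbb{F}\to\mathbb{E}_i$ are bounded linear (thus smooth) and carry the initial structure of the projective limit, a curve $c:\mathbb{R}\to\mathbb{F}$ is smooth if and only if each component $c_i:=\delta_i\circ c$ is smooth in $\mathbb{E}_i$.

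For item (1), I would take an arbitrary smooth curve $c:\mathbb{R}\to\mathbb{F}$ and compute the components of $f\circ c$. Using the thread relation $\delta_i\circ f=f_i\circ\delta_i$, which is exactly the coherence condition (cf. Definition \ref{D_ProjectiveSequenceMappings}) defining $f=\underleftarrow{\lim}f_i$, one gets $\delta_i\circ(f\circ c)=f_i\circ c_i$. Since $c_i$ is a smooth curve in the Banach space $\mathbb{E}_i$ and $f_i$ is smooth, $f_i\circ c_i$ is smooth; by the componentwise criterion above, $f\circ c$ is smooth, and therefore $f$ is smooth in the convenient sense.

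For items (2) and (3), I would compute the derivative through directional derivatives along affine curves. Fixing $x=(x_i)\in\mathbb{F}$ and $v=(v_i)\in\mathbb{F}$ and differentiating $t\mapsto f(x+tv)$ at $t=0$, the $i$-th component is $\delta_i\big(f(x+tv)\big)=f_i(x_i+tv_i)$, so that $\delta_i\big(df_x(v)\big)=(df_i)_{x_i}(v_i)$, i.e. $df_x(v)=\big((df_i)_{x_i}(v_i)\big)_i$. To see that the right-hand side is genuinely the projective limit $\underleftarrow{\lim}(df_i)_{x_i}$, I would differentiate the linear coherence relation $\delta_i^j\circ f_j=f_i\circ\delta_i^j$ at $x_j$: as $\delta_i^j$ is bounded linear it equals its own derivative, so $\delta_i^j\circ(df_j)_{x_j}=(df_i)_{x_i}\circ\delta_i^j$ (using $\delta_i^j(x_j)=x_i$), which is precisely the coherence making $\big((df_i)_{x_i}\big)_i$ a projective sequence of maps. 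This yields $df_x=\underleftarrow{\lim}(df_i)_{x_i}$ and, letting $x$ vary over $\mathbb{F}$, the bundle identity $df=\underleftarrow{\lim}df_i$.

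The main obstacle is the first structural fact, namely that convenient smoothness of a map into $\mathbb{F}$ is detected componentwise through the projections $\delta_i$; this is where the projective-limit (initial) structure on $\mathbb{F}$ must be reconciled with the curve-based definition of smoothness. Everything afterwards reduces to ordinary Banach-space calculus applied fibrewise, together with the bookkeeping of the coherence relations, so the genuine content of the statement lies in that interplay, which is supplied by the cited results of \cite{Gal} and \cite{BCP}.
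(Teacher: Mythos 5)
Your proof is correct. The paper itself offers no proof of this proposition --- it is quoted from \cite{Gal} (Lemma 1.2) and \cite{BCP}, Chapter 4 --- and your curve-based argument (smoothness of curves into the closed subspace $\mathbb{F}\subset\prod_{i}\mathbb{E}_i$ tested componentwise through the projections $\delta_i$, convenient smoothness of $f$ via the coherence $\delta_i\circ f=f_i\circ\delta_i$, and the chain-rule identity $\delta_i^j\circ(df_j)_{x_j}=(df_i)_{x_i}\circ\delta_i^j$ making $\left((df_i)_{x_i}\right)_i$ a projective sequence whose limit is $df_x$) is essentially the same standard route taken by the cited references.
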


\subsection{Projective limits of Banach manifolds}
\label{__ProjectiveLimitsOfBanachManifolds}

\begin{definition}
\label{D_ProjectiveSequenceofBanachManifolds}
The projective sequence $\left( M_{i},\delta_{i}^{j} \right) _{j\geq i}$ is called \textit{projective sequence of Banach manifolds}\index{projective sequence!of Banach manifolds} if
\begin{description}
\item[\textbf{(PSBM 1)}]
$M_{i}$ is a manifold modelled on the Banach space $\mathbb{M}_{i}$;

\item[\textbf{(PSBM 2)}]
$\left(  \mathbb{M}_{i},\overline{\delta_{i}^{j}}\right) _{j\geq i}$ is a projective sequence of Banach spaces;

\item[\textbf{(PSBM 3)}]
For all $x=\left(  x_{i}\right)  \in M=\underleftarrow{\lim}M_{i}$, there exists a projective sequence of local
charts $\left(  U_{i},\xi_{i}\right)  _{i\in\mathbb{N}}$ such that
$x_{i}\in U_{i}$ where one has the relation
\[
\xi_{i}\circ\delta_{i}^{j}=\overline{\delta_{i}^{j}}\circ\varphi_{j};
\]

\item[\textbf{(PSBM 4)}]
 $U=\underleftarrow{\lim}U_{i}$ is a non empty open set in $M$.
\end{description}
\end{definition}

Under the assumptions \textbf{(PSBM 1)} and  \textbf{(PSBM 2)} in Definition \ref{D_ProjectiveSequenceofBanachManifolds}, the assumptions \textbf{(PSBM 3)}] and \textbf{(PSBM 4)}  around $x\in M$ is called \emph{the projective limit chart property} around $x\in M$ and  $(U=\underleftarrow{\lim}U_{i}, \phi=\underleftarrow{\lim}\phi_{i})$ is called a \emph{projective limit chart}.

The projective limit $M=\underleftarrow{\lim}M_{i}$ has a structure of Fr\'{e}chet manifold modelled on the Fr\'{e}chet space $\mathbb{M}
=\underleftarrow{\lim}\mathbb{M}_{i}$ and is called a \emph{$\mathsf{PLB}$-manifold}\index{$\mathsf{PLB}$-manifold}. The differentiable structure is defined \textit{via} the charts $\left(  U,\varphi\right)  $ where $\varphi
=\underleftarrow{\lim}\xi_{i}:U\to\left(  \xi_{i}\left(U_{i}\right)  \right) _{i \in \mathbb{N}}.$\\
$\varphi$ is a homeomorphism (projective limit of homeomorphisms) and the charts changings $\left(  \psi\circ
\varphi^{-1}\right)  _{|\varphi\left(  U\right)  }=\underleftarrow{\lim
}\left(  \left(  \psi_{i}\circ\left(  \xi_{i}\right)  ^{-1}\right)
_{|\xi_{i}\left(  U_{i}\right)  }\right)  $ between open sets of
Fr\'{e}chet spaces are smooth in the sense of convenient spaces.

\subsection{Projective limits of Banach vector bundles }
\label{__ProjectiveLimitsOfBanachVectorBundles}

Let $\left(  M_{i},\delta_{i}^{j}\right)  _{j\geq i}$ be a projective sequence of Banach manifolds where each manifold $M_{i}$ is modelled on the Banach space $\mathbb{M}_{i}$.\\
For any integer $i$, let $ \left( E_{i},\pi_{i},M_{i} \right) $ be the Banach
vector bundle whose type fibre is the Banach vector space $\mathbb{E}_{i}$
where $\left(  \mathbb{E}_{i},\lambda_{i}^{j}\right)  _{j\geq i}$ is a projective sequence of Banach spaces.

\begin{definition}
\label{D_ProjectiveSequenceBanachVectorBundles}
$\left( (E_i,\pi_i,M_i),\left(\xi_i^j,\delta_i^j \right) \right) _{j \geq i}$, where $\xi_i^j:E_j \to E_i$ is a morphism of vector bundles, is called a projective sequence of Banach vector bundles\index{projective sequence!of Banach vector bundles} on the projective sequence of manifolds $\left(  M_{i},\delta_{i}^{j}\right)  _{j\geq i}$ if, for all $ \left( x_{i} \right) $, there exists a projective sequence of
trivializations $\left(  U_{i},\tau_{i}\right)  $ of $\left(  E_{i},\pi
_{i},M_{i}\right) $, where $\tau_{i}:\left(  \pi_{i}\right)  ^{-1}\left(
U_{i}\right)  \to U_{i}\times\mathbb{E}_{i}$ are local
diffeomorphisms, such that $x_{i}\in U_{i}$ (open in $M_{i}$) and where
$U=\underleftarrow{\lim}U_{i}$ is a non empty open set in $M$
 where, for all $(i,j) \in \mathbb{N}^2$ such that $j\geq i,$ we have the compatibility condition
\begin{description}
\item[(\textbf{PLBVB})]
$\left(  \delta_{i}^{j}\times\lambda_{i}^{j}\right)  \circ\tau_{j}=\tau_{i}\circ \xi_i^j$.
\end{description}
\end{definition}

With the previous notations,  $(U=\underleftarrow{\lim}U_{i}, \tau=\underleftarrow{\lim}\tau_i)$   is called a \emph{ projective bundle chart limit}\index{projective bundle chart limit}. The triple of  projective limit
 $(E=\underleftarrow{\lim}E_{i}, \pi=\underleftarrow{\lim}\pi_{i}, M=\underleftarrow{\lim}M_{i}))$ is called a \emph{projective limit of Banach bundles} or $\mathsf{PLB}$-bundle\index{$\mathsf{PLB}$-bundle} for short. \\

The following proposition generalizes the result of \cite{Gal} about the projective limit of tangent bundles to Banach manifolds (cf. \cite{DGV} and \cite{BCP}). 

\begin{proposition}
\label{P_ProjectiveLimitOfBanachVectorBundles}
Let $\left( (E_i,\pi_i,M_i),\left(\xi_i^j,\delta_i^j \right) \right)_{j \geq i}$ be a projective sequence of Banach vector bundles. \\
Then $\left(  \underleftarrow{\lim}E_i,\underleftarrow{\lim}\pi_i,\underleftarrow{\lim}M_i \right)  $ is a Fr\'{e}chet vector bundle.
\end{proposition}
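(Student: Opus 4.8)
The plan is to verify directly that the projective limit satisfies the defining properties of a Fréchet vector bundle, using throughout the fact that projective limits of smooth maps between Banach manifolds are smooth in the convenient sense (Proposition \ref{P_ProjectiveLimitsOfDifferentialMaps}). First I would set up the underlying manifold structures. The base $M=\underleftarrow{\lim}M_i$ is a $\mathsf{PLB}$-manifold, hence a Fréchet manifold modelled on $\mathbb{M}=\underleftarrow{\lim}\mathbb{M}_i$, and the typical fibre $\mathbb{E}=\underleftarrow{\lim}\mathbb{E}_i$ is a Fréchet space. Since each bundle morphism $\xi_i^j:E_j\to E_i$ is in particular a smooth map of the Banach manifolds $E_j,E_i$ and the cocycle relations are part of the data of Definition \ref{D_ProjectiveSequenceBanachVectorBundles}, the family $(E_i,\xi_i^j)_{j\geq i}$ is a projective sequence of Banach manifolds; the bundle charts constructed below will supply the projective-limit-chart property, so that $E=\underleftarrow{\lim}E_i$ carries a Fréchet manifold structure modelled on $\mathbb{M}\times\mathbb{E}$.

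Next I would treat the projection. Because $\xi_i^j$ is a vector bundle morphism over $\delta_i^j$, one has $\delta_i^j\circ\pi_j=\pi_i\circ\xi_i^j$, so $(\pi_i)$ is a projective sequence of smooth mappings and its limit $\pi=\underleftarrow{\lim}\pi_i:E\to M$ is smooth by Proposition \ref{P_ProjectiveLimitsOfDifferentialMaps}. Moreover, for each thread $x=(x_i)$ the fibre is $\pi^{-1}(x)=\underleftarrow{\lim}\pi_i^{-1}(x_i)=\underleftarrow{\lim}\mathbb{E}_i=\mathbb{E}$, which identifies the typical fibre.

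The heart of the argument is local triviality. Fixing a thread $x=(x_i)\in M$, Definition \ref{D_ProjectiveSequenceBanachVectorBundles} furnishes a projective sequence of trivializations $(U_i,\tau_i)$ with $\tau_i:\pi_i^{-1}(U_i)\to U_i\times\mathbb{E}_i$ and $U=\underleftarrow{\lim}U_i$ a nonempty open subset of $M$. The compatibility condition (\textbf{PLBVB}), that is $(\delta_i^j\times\lambda_i^j)\circ\tau_j=\tau_i\circ\xi_i^j$, says exactly that $(\tau_i)$ is a projective sequence of mappings from $(\pi_i^{-1}(U_i))$ to $(U_i\times\mathbb{E}_i)$. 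Hence the limit $\tau=\underleftarrow{\lim}\tau_i:\pi^{-1}(U)\to U\times\mathbb{E}$ is well defined and bijective, its inverse being the coherent limit $\underleftarrow{\lim}\tau_i^{-1}$; it is fibrewise linear because each $\tau_i$ is, and it makes the trivialization square commute. Smoothness of both $\tau$ and $\tau^{-1}$ in the convenient sense follows once more from Proposition \ref{P_ProjectiveLimitsOfDifferentialMaps}. Over an overlap $U\cap U'$ the Banach transition functions $g_i=\tau_i'\circ\tau_i^{-1}$ take values in $\operatorname{GL}(\mathbb{E}_i)$ and, since both $(\tau_i)$ and $(\tau_i')$ satisfy (\textbf{PLBVB}), they form a projective sequence whose limit $g=\underleftarrow{\lim}g_i$ is a smooth $\operatorname{GL}(\mathbb{E})$-valued cocycle. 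Assembling the fibrewise-linear local diffeomorphisms $\tau$, the smooth projection $\pi$, and the smooth transition cocycle $g$ exhibits $(\underleftarrow{\lim}E_i,\underleftarrow{\lim}\pi_i,\underleftarrow{\lim}M_i)$ as a Fréchet vector bundle with typical fibre $\mathbb{E}$.

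I expect the main obstacle to be the careful verification that $\tau=\underleftarrow{\lim}\tau_i$ is a genuine bundle chart for the Fréchet structure and not merely a set-theoretic limit: one must check that $\pi^{-1}(U)$ really coincides with $\underleftarrow{\lim}\pi_i^{-1}(U_i)$, that the limit map restricts to a linear isomorphism on each fibre, and that it is a convenient diffeomorphism in both directions. It is precisely here that the cocycle condition (\textbf{PLBVB}) and Proposition \ref{P_ProjectiveLimitsOfDifferentialMaps} must be used together, the former guaranteeing coherence of the $\tau_i$ (so that the limit exists and is fibrewise linear) and the latter guaranteeing smoothness of $\tau$, $\tau^{-1}$, and $g$.
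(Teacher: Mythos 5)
Your proposal is correct and follows essentially the same route as the proof the paper delegates to the literature (\cite{Gal}, \cite{DGV}, \cite{BCP}): one assembles the limit trivializations $\tau=\underleftarrow{\lim}\tau_i$ from the coherent family guaranteed by Definition \ref{D_ProjectiveSequenceBanachVectorBundles} via the condition \textbf{(PLBVB)}, checks $\pi^{-1}(U)=\underleftarrow{\lim}\pi_i^{-1}(U_i)$ and fibrewise linearity, and invokes Proposition \ref{P_ProjectiveLimitsOfDifferentialMaps} for convenient smoothness of $\pi$, $\tau$, $\tau^{-1}$ and the chart changings. One small caution on wording: for a Fr\'echet fibre $\mathbb{E}$ the group $\operatorname{GL}(\mathbb{E})$ is not open in $\operatorname{L}(\mathbb{E})$ and carries no good Lie group structure, so your ``smooth $\operatorname{GL}(\mathbb{E})$-valued cocycle'' should be understood either as the convenient-smooth map $(x,v)\mapsto g(x)v$ on $(U\cap U')\times\mathbb{E}$, or as valued in the projective limit of coherent sequences of isomorphisms of the $\mathbb{E}_i$ --- which is exactly what your limit construction produces, so the substance of the argument is unaffected.
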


\begin{definition}
\label{D_ProjectiveSequenceofBanachLieAlgebroids}
$\left( \left(  E_{i},\pi_{i},M_{i},\rho_{i}, [.,.]_{i} \right),\left( \xi_i^j, \delta_{i}^{j} \right) \right) _{(i,j)\in\mathbb{N}^2, j \geq i}$ is called a projective sequence of Lie algebroids \index{projective sequence!of Lie algebroids} if
\begin{description}
\item[\textbf{(PSBLA 1)}]
$\left(  E_{i},\xi_i^j\right)  _{j\geq i} $ is a 
projective sequence of Banach vector bundles ($\pi_{i}:E_{i}\to M_{i})_{i\in\mathbb{N}}$ over the 
projective sequence of manifolds $ \left(  M_{i},\delta_{i}^{j}\right) _{j\geq i}$;

\item[\textbf{(PSBLA 2)}]
For all $\left( i,j \right) \in\mathbb{N}^2$ such that $j\geq i$, one has
\[
\rho_{i}\circ \xi_i^j=T\delta_{i}^{j}\circ\rho_{j}
\]
\item[\textbf{(PSBLA 3)}]
$\xi_i^j:E_{j}\to E_{i}$ is a Lie morphism  from $\left(  E_{j},\pi_{j},M_{j},\rho_{j} \right)  $ to $\left(  E_{i},\pi_{i},M_{i},\rho_{i}\right)  $
\end{description}
\end{definition}

We then have the following result (cf. \cite{BCP}):
\begin{theorem}
\label{T_ProjectiveLimitOfBanachLieAlgebroids} 
Let $\left( \left(  E_{i},\pi_{i},M_{i},\rho_{i}, [.,.]_{i} \right),\left( \xi_i^j, \delta_{i}^{j} \right) \right) _{(i,j)\in\mathbb{N}^2, j \geq i}$ be a projective sequence of Banach-Lie algebroids.  If $(M_i,\delta_i^j)_{(i,j)\in\mathbb{N}^2, j \geq i}$ is a submersive projective sequence,
then $\left(\underleftarrow{\lim}E_{i},\underleftarrow{\lim}\pi_{i},\underleftarrow{\lim}M_{i},\underleftarrow{\lim}\rho_{i}\right)  $ is a strong partial Fr\'{e}chet Lie algebroid.
\end{theorem}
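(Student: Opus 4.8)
The plan is to assemble the underlying Fréchet anchored bundle first, then equip it with the partial bracket obtained by passing the level-wise brackets to the projective limit, and finally check the strong condition. Throughout I follow the pattern already used for prolongations in Proposition \ref{P_ProjectiveLimitProlongationBracket} and Theorem \ref{T_PartialLieAlgebroid}, but applied directly to the algebroid limit.

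First I would build the anchored bundle. By \textbf{(PSBLA 1)} the pairs $(E_i,\xi_i^j)$ form a projective sequence of Banach vector bundles, so Proposition \ref{P_ProjectiveLimitOfBanachVectorBundles} yields that $\left(\underleftarrow{\lim}E_i,\underleftarrow{\lim}\pi_i,\underleftarrow{\lim}M_i\right)$ is a Fréchet vector bundle, which I write as $(\mathcal{A},\pi,M)$. Since $(M_i,\delta_i^j)$ is submersive, the tangent functor commutes with the limit, that is $\underleftarrow{\lim}TM_i=T\left(\underleftarrow{\lim}M_i\right)=TM$ (a Galanis-type argument, cf. the appendix and \cite{BCP}). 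Condition \textbf{(PSBLA 2)}, namely $\rho_i\circ\xi_i^j=T\delta_i^j\circ\rho_j$, is exactly the coherence required of the family $(\rho_i)$, so $\rho:=\underleftarrow{\lim}\rho_i:\mathcal{A}\to TM$ is a well-defined Fréchet bundle morphism and $(\mathcal{A},\pi,M,\rho)$ is a Fréchet anchored bundle.

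Next I would define the partial structure. For an open set $U=\underleftarrow{\lim}U_i$ in $M$, call a family $(\mathfrak{a}_i)$ of local sections $\mathfrak{a}_i\in\Gamma\left((E_i)_{U_i}\right)$ coherent if $\xi_i^j\circ\mathfrak{a}_j=\mathfrak{a}_i\circ\delta_i^j$ for $j\geq i$; each such family has a projective limit $\underleftarrow{\lim}\mathfrak{a}_i\in\Gamma(\mathcal{A}_U)$. I would let $\mathfrak{P}(U)$ be the $C^\infty(U)$-module generated by all such limits, and check, as in Theorem \ref{T_PartialLieAlgebroid}, that $\{\mathfrak{P}(U)\}$ glues to a sub-sheaf $\mathfrak{P}_M$ of the sheaf of sections of $\mathcal{A}$. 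The bracket is then prescribed on generators by
\[
[\mathfrak{a},\mathfrak{b}]_{\mathcal{A}}=\underleftarrow{\lim}[\mathfrak{a}_i,\mathfrak{b}_i]_i,\qquad \mathfrak{a}=\underleftarrow{\lim}\mathfrak{a}_i,\ \mathfrak{b}=\underleftarrow{\lim}\mathfrak{b}_i.
\]
The crucial point is that $([\mathfrak{a}_i,\mathfrak{b}_i]_i)$ is again coherent: because each $\xi_i^j$ is a Lie morphism, property \textbf{(LM 2)} applied to the $\delta_i^j$-related pairs $(\mathfrak{a}_j,\mathfrak{a}_i)$ and $(\mathfrak{b}_j,\mathfrak{b}_i)$ gives $\xi_i^j\circ[\mathfrak{a}_j,\mathfrak{b}_j]_j=[\mathfrak{a}_i,\mathfrak{b}_i]_i\circ\delta_i^j$, so the limit exists and lies in $\mathfrak{P}(U)$. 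I would then extend the bracket to all of $\mathfrak{P}(U)$ by the Leibniz rule exactly as in the proof of Lemma \ref{L_extbracket}, obtaining a bracket that depends only on $1$-jets and satisfies Definition \ref{D_AlmostLieBracketOnAnAnchoredBundle}. Since each $[.,.]_i$ satisfies the Jacobi identity and each $\rho_i$ is a Lie algebra morphism, passing to the limit yields the Jacobi identity for $[.,.]_{\mathcal{A}}$ and the compatibility $\rho([\mathfrak{a},\mathfrak{b}]_{\mathcal{A}})=[\rho(\mathfrak{a}),\rho(\mathfrak{b})]$ on $\mathfrak{P}(U)$, giving conditions (i)--(iii) of Definition \ref{D_PartialConvenientLieAlgebroid}.

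Finally I would verify the strong condition. Fix $x=(x_i)\in M$ and $v=(v_i)\in\pi^{-1}(x)=\underleftarrow{\lim}(E_i)_{x_i}$. Working in a projective limit bundle chart as in \textbf{(PLBVB)}, in which $\xi_i^j$ acts fibrewise as the bonding map $\lambda_i^j$ and $v$ is a thread (so $\lambda_i^j(v_j)=v_i$), the constant sections $\mathfrak{a}_i\equiv v_i$ form a coherent family whose limit is a local section in $\mathfrak{P}$ with value $v$ at $x$; hence the stalk $\mathfrak{P}_x$ equals $\pi^{-1}(x)$ and the partial Lie algebroid is strong. I expect the main obstacle to be the well-definedness and coherence of $[.,.]_{\mathcal{A}}$ across the whole module $\mathfrak{P}(U)$ — showing that the Leibniz extension is independent of the chosen decomposition into generators and respects the restriction maps, so that $\mathfrak{P}_M$ is genuinely a sheaf of Lie algebras — with the submersive hypothesis entering decisively both here, through the identification $\underleftarrow{\lim}TM_i=TM$ that makes $\rho$ land in the tangent bundle, and in guaranteeing that enough coherent families of sections exist to realize every fibre element.
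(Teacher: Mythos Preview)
The paper does not actually prove Theorem \ref{T_ProjectiveLimitOfBanachLieAlgebroids}: it is stated in the appendix with the attribution ``(cf.\ \cite{BCP})'' and no argument is given. So there is no in-paper proof to compare your proposal against.

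That said, your outline is sound and in fact mirrors exactly the scheme the paper \emph{does} carry out in detail for the analogous Proposition \ref{P_ProjectiveLimitProlongationBracket}: build the limit anchored bundle from Proposition \ref{P_ProjectiveLimitOfBanachVectorBundles} and the coherence \textbf{(PSBLA 2)}, define the bracket on projective limits of sections using that each $\xi_i^j$ is a Lie morphism (\textbf{(PSBLA 3)}), extend by Leibniz as in Lemma \ref{L_extbracket}, and verify strongness via constant sections in a projective limit chart. The submersive hypothesis is indeed what lets you identify $\underleftarrow{\lim}TM_i$ with $TM$ so that $\rho$ lands in the right target. The one place where you should be a little more careful is the strongness step: you need the submersiveness of the $\delta_i^j$ (not just the bundle chart) to ensure that, after shrinking, the open sets $U_i$ can be taken so that $U=\underleftarrow{\lim}U_i$ is a genuine neighbourhood of $x$ and the coherent family of constant sections really descends to a section of $\mathcal{A}$ over $U$; this is straightforward but worth making explicit.
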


\section{Direct limits}
\label{_Directlimits}

\subsection{Direct limits of topological spaces}
\label{__DirectLimitsOfTopologicalSpaces}

Let $\left\{  \left(  Y_{i},\varepsilon_{i}^{j}\right)  \right\}  _{(i,j)\in I^2,\ i \preccurlyeq j}$ be a direct system of topological spaces and continuous maps. The direct limit $\left\{  \left(  X,\varepsilon_{i}\right)
\right\}  _{i\in I}$ of the sets becomes the direct limit in the category ${\bf TOP}$ of topological spaces if $X$ is endowed with the direct limit topology (DL-topology for short)\index{DL-topology}\index{topology!DL-}, i.e. the final topology with respect to the inclusion maps $\varepsilon_{i}:X_{i}\to X$ which corresponds to the finest topology which makes the maps $\varepsilon_{i}$ continuous. So $O\subset X$ is open if and only if $\varepsilon_{i}^{-1}\left(  O\right)  $ is open in $X_{i}$ for each
$i\in I$.
\begin{definition}
\label{D_AscendingSequenceOfTopologicalSpaces}
\index{ascending sequence!of topological spaces}
Let $\mathcal{S}=\left(   \left(  X_{n},\varepsilon_{n}^{m}\right)  \right) _{(m,n) \in\mathbb{N}^2,\ n\leq m}$ be a direct sequence of topological spaces such that each $\varepsilon_{n}^{m}$ is injective. Without loss of generality, we may assume that we have
\[
X_{1}\subset X_{2}\subset\cdots\subset X_{n}\subset X_{n+1}\subset\cdots
\]
and $\varepsilon_{n}^{n+1}$ becomes the natural inclusion.

\begin{description}
\item[\textbf{(ASTS)}]
$\mathcal{S}$ will be called an ascending sequence of topological spaces.
\item[\textbf{(SASTS)}]
Moreover, if each $\varepsilon_{n}^{m}$ is a topological embedding, then we will say that $\mathcal{S}$ is a \emph{strict ascending sequence of topological spaces} (\textit{expanding sequence}
\end{description}
\end{definition}

\begin{notation}
\label{N_DirectSequence}
The direct sequence $\left( \left(  X_{n},\varepsilon_{n}^{m}\right) \right)_{(n,m) \in \mathbb{N}^2,\; n \leq m}$ will be denoted $\left(  X_{n},\varepsilon_{n}^{m} \right) _{n \leq m}$.
\end{notation}

If $\left(  X_{n},\varepsilon_{n}^{m}\right) _{n\leq m}$ is a strict ascending sequence of topological spaces\index{ascending sequence!of topological spaces}, each $\varepsilon_{n}$ is a topological embedding from $X_{n}$ into $X=\underrightarrow{\lim}X_{n}$.

\subsection{Direct limits of ascending sequences of Banach manifolds}
\label{__DirectLimitsOfAscendingSequencesOfBanachManifolds}
\index{direct limit!of Banach manifolds}

\begin{definition}
\label{D_AscendingSequenceOfBanachManifolds}
$\mathcal{M}=(M_{n},\varepsilon_{n}^{n+1})  _{n\in\mathbb{N}}$ is called an ascending sequence of Banach manifolds if, for any $n\in\mathbb{N}$, $\left(  M_{n},\varepsilon_{n}^{n+1}\right)  $ is a closed  submanifold of
$M_{n+1}$.
\end{definition}

\begin{proposition}
\label{P_ConditionsDefiningDLCP}(cf. \cite{CaPe} \cite{BCP})
Let $\mathcal{M}=\left( M_{n},\varepsilon_{n}^{n+1} \right) _{n\in\mathbb{N}}$ be an ascending sequence of Banach manifolds.\\
Assume that for $x\in M=\underrightarrow {\lim}M_{n}$, there exists a sequence of charts $\left( (U_{n},\phi_{n})\right) _{n \in \mathbb{N}}$ of $\left( M_{n}\right) _{n \in \mathbb{N}}$, such that:
\begin{description}
\item[\textbf{(ASC 1)}]
$(U_{n})_{n\in\mathbb{N}}$ is an ascending sequence of chart domains;
\item[\textbf{(ASC 2)}]
{\hfil $\forall n\in\mathbb{N},\ \phi_{n+1}\circ\varepsilon_{n}^{n+1}=\iota_{n}^{n+1}\circ\phi_{n}$.}
\end{description}
Then $U=\underrightarrow{\lim}U_{n}$ is an open set of $M$ endowed with the
$\mathrm{DL}$-topology and $\phi=\underrightarrow{\lim}\phi_{n}$ is a well
defined map from $U$ to $\mathbb{M}=\underrightarrow{\lim}\mathbb{M}_{n}$.\\
Moreover, $\phi$ is a continuous homeomorphism from $U$ onto the open set $\phi(U)$ of $\mathbb{M}$.
\end{proposition}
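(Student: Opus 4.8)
The plan is to exploit the two defining features of the direct-limit topology: that a set is open in $M=\underrightarrow{\lim}M_n$ exactly when its trace on each $M_n$ is open, and that a map out of a direct limit is continuous exactly when its restriction to each stage is continuous (the final topology and its universal property). Throughout I would use the coherence condition \textbf{(ASC 2)}, namely $\phi_{n+1}\circ\varepsilon_n^{n+1}=\iota_n^{n+1}\circ\phi_n$, together with the fact that the bonding maps $\iota_n^{n+1}:\mathbb{M}_n\to\mathbb{M}_{n+1}$ of the model spaces are continuous linear injections (topological embeddings, since each $M_n$ is closed in $M_{n+1}$), so that the canonical maps $\iota_n:\mathbb{M}_n\to\mathbb{M}$ are injective.

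First I would establish that $\phi=\underrightarrow{\lim}\phi_n$ is well defined. A point of $U=\underrightarrow{\lim}U_n$ is represented by some $z\in U_m$, and I set $\phi(x)=\iota_m(\phi_m(z))$. If $z\in U_m$ and $z'\in U_{m'}$ represent the same point, they agree in some $U_k$, and iterating \textbf{(ASC 2)} yields $\iota_m^k\circ\phi_m=\phi_k\circ\varepsilon_m^k$; hence $\iota_m(\phi_m(z))=\iota_k(\phi_k(\varepsilon_m^k(z)))=\iota_k(\phi_k(\varepsilon_{m'}^k(z')))=\iota_{m'}(\phi_{m'}(z'))$, so $\phi$ is independent of the chosen representative. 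This is exactly the coherence that makes $(\phi_n)$ a direct sequence of maps.

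Next I would treat openness of $U$, continuity and injectivity of $\phi$. For openness I compute $\varepsilon_n^{-1}(U)=\bigcup_{m\geq n}(\varepsilon_n^m)^{-1}(U_m)$ using injectivity of the $\varepsilon_m$; being a union of preimages of open chart domains under continuous maps, it is open in $M_n$, so $U$ is open by the characterization of the $\mathrm{DL}$-topology. For continuity, the final topology on $U$ reduces the claim to continuity of each $\phi\circ\varepsilon_n|_{U_n}=\iota_n\circ\phi_n$, a composite of the continuous chart $\phi_n$ with the continuous inclusion $\iota_n$. Injectivity follows since $\phi(x)=\phi(x')$ forces, after passing to a common stage $U_m$, the equality $\iota_m(\phi_m(z))=\iota_m(\phi_m(z'))$; injectivity of $\iota_m$ and then of the chart $\phi_m$ gives $z=z'$.

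The main obstacle, as always with direct limits, is the final clause: that $\phi$ is a homeomorphism onto an \emph{open} subset of $\mathbb{M}$, since subspace and direct-limit topologies need not agree for general direct systems. I would first show $\phi(U)$ is open by the same device as for $U$: for each $n$ one gets $\iota_n^{-1}(\phi(U))=\phi_n(U_n)\cup\bigcup_{m\geq n}(\iota_n^m)^{-1}(\phi_m(U_m))$, a union of open sets in $\mathbb{M}_n$, whence $\phi(U)$ is open in $\mathbb{M}$. For the inverse I would equip $\phi(U)=\underrightarrow{\lim}\phi_n(U_n)$ with its final topology and verify $\phi^{-1}\circ\iota_n|_{\phi_n(U_n)}=\varepsilon_n\circ\phi_n^{-1}$, which is continuous because $\phi_n$ is a chart homeomorphism onto the open set $\phi_n(U_n)$ and $\varepsilon_n$ is the continuous inclusion, giving continuity of $\phi^{-1}$. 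The delicate point throughout is that each open-set computation must stay within the individual stages, and this is exactly where the hypotheses that the bonding maps are topological embeddings and the charts are homeomorphisms onto open subsets are genuinely used.
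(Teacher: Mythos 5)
Your argument is correct; note that the paper itself states this proposition without proof, recalling it from \cite{CaPe} and \cite{BCP}, and your proof is essentially the standard one used there: well-definedness of $\phi=\underrightarrow{\lim}\phi_{n}$ from the iterated coherence condition \textbf{(ASC 2)}, openness of $U$ and of $\phi(U)$ via the stagewise computations $\varepsilon_{n}^{-1}(U)=\bigcup_{m\geq n}(\varepsilon_{n}^{m})^{-1}(U_{m})$ and $\iota_{n}^{-1}(\phi(U))=\bigcup_{m\geq n}(\iota_{n}^{m})^{-1}(\phi_{m}(U_{m}))$ (your extra term $\phi_n(U_n)$ is just the case $m=n$, and the inclusion $\iota_{m}^{k}(\phi_{m}(U_{m}))\subset\phi_{k}(U_{k})$ needed for the computation follows from \textbf{(ASC 1)} together with \textbf{(ASC 2)}), and continuity of $\phi$ and $\phi^{-1}$ by the universal property of the final topology. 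The one step you use twice but leave implicit is the identification of the subspace topology induced from $M$ (resp. from $\mathbb{M}$) on $U$ (resp. on $\phi(U)$) with the direct-limit topology of $\underrightarrow{\lim}U_{n}$ (resp. $\underrightarrow{\lim}\phi_{n}(U_{n})$): the universal property gives continuity for the limit topology, which is a priori finer than the subspace topology. This identification is true here, and your own openness computations are exactly what proves it: given $V\subset U$ with each $(\varepsilon_{n}|_{U_{n}})^{-1}(V)$ open in $U_{n}$, any $x\in\varepsilon_{n}^{-1}(V)$ lies in $(\varepsilon_{n}^{m})^{-1}(U_{m})$ for some $m\geq n$, and pulling back the open set $\varepsilon_{m}^{-1}(V)\cap U_{m}$ through the continuous map $\varepsilon_{n}^{m}$ shows $\varepsilon_{n}^{-1}(V)$ is open in $M_{n}$, so $V$ is open in $M$; the same argument works on the model side. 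Spelling out this lemma would make the proof complete; everything else (injectivity of $\phi$ from injectivity of the $\iota_{m}$ and of the charts, the formula $\phi^{-1}\circ\iota_{n}|_{\phi_{n}(U_{n})}=\varepsilon_{n}\circ\phi_{n}^{-1}$) is correct as written.
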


\begin{definition}
\label{D_DLChartProperty}
We say that an ascending sequence $\mathcal{M}= (M_{n},\varepsilon_{n}^{n+1}) _{n\in\mathbb{N}}$ of Banach manifolds has the direct limit chart property \emph{\textbf{(DLCP)}}\index{direct!limit chart} at $x$ if it satisfies both \emph{\textbf{(ASC 1)}} and \emph{\textbf{(ASC 2)}}.
\end{definition}

We then have the fundamental result (cf. \cite{CaPe}).
\begin{theorem}
\label{T_LBCmanifold}
Let $ \left( M_{n} \right) _{n\in\mathbb{N}}$ be an ascending sequence of Banach $C^\infty$-manifolds, modelled on the Banach spaces $\mathbb{M}_{n}$. Assume that
\begin{description}
\item[\textbf{(ASBM 1)}]
$ \left( M_{n} \right) _{n\in\mathbb{N}}$ has the direct limit chart property \emph{\textbf{(DLCP)}} at each point $x\in M=\underrightarrow{\lim}M_{n}$
\item[\textbf{(ASBM 2)}]
$\mathbb{M}=\underrightarrow{\lim}\mathbb{M}_{n}$ is a convenient space.
\end{description}
Then there is a unique  n.n.H. convenient manifold structure on
$M=\underrightarrow{\lim}M_{n}$ modelled on the convenient space $\mathbb{M}$ such that the topology associated to this structure is the  $DL$-topology on $M$. \\
In particular, for each $n\in\mathbb{N}$, the canonical injection $\varepsilon_{n}:M_{n}\longrightarrow M$ is an
injective conveniently smooth map and $(M_{n},\varepsilon_{n})$ is a closed submanifold of $M$.\\
Moreover, if each $M_n$ is locally compact or is open in $M_{n+1}$ or is a paracompact Banach manifold closed  in $M_{n+1}$, then $M=\underrightarrow{\lim}M_{n}$ is provided with a Hausdorff  convenient manifold structure.\\
\end{theorem}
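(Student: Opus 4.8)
The plan is to construct the convenient atlas on $M=\underrightarrow{\lim}M_n$ directly from the direct limit charts furnished by \textbf{(ASBM 1)}, to verify that the resulting transition maps are conveniently smooth, and then to read off the remaining assertions (topology, uniqueness, the embeddings $\varepsilon_n$, and the Hausdorff criteria) from this atlas together with general facts about direct limits.

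First I would fix $x\in M$ and invoke \textbf{(DLCP)} (Definition \ref{D_DLChartProperty}) to obtain an ascending sequence of charts $(U_n,\phi_n)$ satisfying \textbf{(ASC 1)} and \textbf{(ASC 2)}. Proposition \ref{P_ConditionsDefiningDLCP} then yields that $U=\underrightarrow{\lim}U_n$ is open for the $\mathrm{DL}$-topology and that $\phi=\underrightarrow{\lim}\phi_n$ is a homeomorphism onto the open set $\phi(U)\subset\mathbb{M}$, where $\mathbb{M}=\underrightarrow{\lim}\mathbb{M}_n$ is convenient by \textbf{(ASBM 2)}. These pairs $(U,\phi)$ cover $M$ as $x$ varies, so it remains to check that their transition maps are smooth in the convenient sense. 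Given two such charts with $U\cap V\neq\emptyset$, the change of coordinates is $\psi\circ\phi^{-1}=\underrightarrow{\lim}\bigl(\psi_n\circ\phi_n^{-1}\bigr)$, a direct limit of smooth maps between $c^\infty$-open subsets of the Banach spaces $\mathbb{M}_n$ that are compatible with the step inclusions by \textbf{(ASC 2)}.

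The key step, and the main obstacle, is to show that this direct limit map is conveniently smooth. Here I would use the characterization of smoothness by smooth curves (cf. \cite{KrMi}): it suffices that $(\psi\circ\phi^{-1})\circ c$ be smooth for every smooth curve $c:\mathbb{R}\to\phi(U\cap V)\subset\mathbb{M}$. The decisive point is that a smooth curve into the convenient direct limit $\mathbb{M}=\underrightarrow{\lim}\mathbb{M}_n$ of an ascending sequence of Banach spaces locally factors through some step $\mathbb{M}_n$; on such a piece the composite coincides with $(\psi_n\circ\phi_n^{-1})\circ c$, which is smooth since $\psi_n\circ\phi_n^{-1}$ is a smooth Banach chart change, and by \textbf{(ASC 2)} these local expressions agree on overlaps. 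Hence $(\psi\circ\phi^{-1})\circ c$ is globally smooth and $\psi\circ\phi^{-1}$ is conveniently smooth. This endows $M$ with a convenient manifold structure modelled on $\mathbb{M}$; since each $\phi$ is a homeomorphism for the $\mathrm{DL}$-topology, the manifold topology is exactly the $\mathrm{DL}$-topology, and uniqueness follows because any convenient structure compatible with the $\varepsilon_n$ and inducing the $\mathrm{DL}$-topology must have charts subordinate to the direct limit charts.

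Finally I would deduce the structural statements. In a direct limit chart the map $\varepsilon_n$ is represented by the bounded linear inclusion $\iota_n:\mathbb{M}_n\hookrightarrow\mathbb{M}$, which is conveniently smooth and injective; since $M_n$ is closed in each $M_m$ with $m\geq n$, the final-topology description of the $\mathrm{DL}$-topology shows $\varepsilon_n(M_n)$ is closed in $M$, while the charts exhibit it as a submanifold modelled on $\mathbb{M}_n$, so $(M_n,\varepsilon_n)$ is a closed submanifold. For the Hausdorff criteria I would separate two distinct points $x\neq y$, each lying in some $M_n$: under the stated hypotheses (each $M_n$ locally compact, or open in $M_{n+1}$, or a paracompact Banach manifold closed in $M_{n+1}$) the $\mathrm{DL}$-topology of the ascending sequence is regular enough that one can build an ascending sequence of disjoint open neighbourhoods whose direct limits separate $x$ and $y$; this is the standard point-set argument for inductive limits and upgrades the n.n.H. structure to a genuine Hausdorff convenient manifold.
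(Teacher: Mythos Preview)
The paper does not actually prove this theorem: it is stated in the appendix as a ``fundamental result'' with a citation to \cite{CaPe}, so there is no proof in the paper against which to compare your attempt. Your outline is the standard one and is essentially what one finds in the cited reference: build the atlas from direct limit charts via Proposition~\ref{P_ConditionsDefiningDLCP}, check smoothness of transitions by testing on smooth curves, and then handle the embeddings and separation axioms.

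One point deserves more care than you give it. The claim that every smooth curve $c:\mathbb{R}\to\mathbb{M}=\underrightarrow{\lim}\mathbb{M}_n$ locally factors through some $\mathbb{M}_n$ is the crux of the argument, and it is not automatic for arbitrary convenient direct limits. What makes it work here is that an ascending sequence of Banach spaces with closed (indeed supplemented) inclusions yields a strict $(LB)$-space, and such spaces are compactly regular: every compact set (in particular the image of a compact interval under a continuous curve) lies in some step. You should state this explicitly and cite the relevant fact (Dieudonn\'e--Schwartz, or the treatment in \cite{KrMi}), since without it the transition-map smoothness does not follow. The Hausdorff part is also only sketched; the actual arguments in each of the three cases (local compactness, openness, paracompact closed) are somewhat different in nature and require separate treatment, as is done in \cite{CaPe}.
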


\subsection{Direct limits of Banach vector bundles}
\label{__DirectLimitsOfBanachVectorBundles}
\index{direct limit!of Banach vector bundles}%

\begin{definition}
\label{D_AscendingSequenceBanachVectorBundles}$\left( (E_n,\pi_n,M_n),\left(\lambda_n^{n+1},\varepsilon_n^{n+1} \right) \right)_{n \in \mathbb{N}}$ is called a strong ascending sequence of Banach vector bundles if the following assumptions are satisfied:

\begin{description}
\item[\textbf{(ASBVB 1)}]
$\mathcal{M}=(M_{n})_{n\in\mathbb{N}}$ is an
ascending sequence of Banach $C^\infty$-manifolds, where $M_{n}$ is modelled
on the Banach space $\mathbb{M}_{n}$ such that $\mathbb{M}_{n}$ is a
supplemented Banach subspace of $\mathbb{M}_{n+1}$ and the inclusion
$\varepsilon_{n}^{n+1}:M_{n}\to M_{n+1}$ is a $C^\infty$
injective map such that $(M_{n},\varepsilon_{n}^{n+1})$ is a closed submanifold of $M_{n+1}$;
\item[\textbf{(ASBVB 2)}]
The sequence $(E_{n})_{n\in\mathbb{N}}$ is an ascending sequence such that the sequence of typical fibres $\left(\mathbb{E}_{n}\right)  _{n\in\mathbb{N}}$ of $(E_{n})_{n\in\mathbb{N}}$ is an ascending sequence of Banach spaces and $\mathbb{E}_{n}$ is a supplemented Banach subspace of $\mathbb{E}_{n+1}$;
\item[\textbf{(ASBVB 3)}]
For each $n\in\mathbb{N}$, $\pi_{n+1}\circ\lambda_{n}^{n+1}=\varepsilon_{n}^{n+1}\circ\pi_{n}$ where $\lambda
_{n}^{n+1}:E_{n}\to E_{n+1}$ is the natural inclusion;
\item[\textbf{(ASBVB 4)}]
Any $x\in M=\underrightarrow{\lim}M_{n}$ has the
direct limit chart property \emph{\textbf{(DLCP) }}for $(U=\underrightarrow
{\lim}U_{n},\phi=\underrightarrow{\lim}\phi_{n})$;
\item[\textbf{(ASBVB 5)}]
For each $n\in\mathbb{N}$, there exists a
trivialization $\Psi_{n}:\left(  \pi_{n}\right)  ^{-1}\left(  U_{n}\right)
\to U_{n}\times\mathbb{E}_{n}$ such that, for any $i\leq j$, the
following diagram is commutative:
\end{description}
\[
\xymatrix{
\left( \pi_{i} \right) ^{-1} \left( U_{i} \right) \ar[r]^{\lambda_{i}^{j}}\ar[d]_{\Psi_{i}}
                                        & \left( \pi_{j} \right) ^{-1} \left( U_{j} \right) \ar[d]^{\Psi_j}\\
U_{i} \times \mathbb{E}_{i} \ar[r]^{\varepsilon_{i}^{j} \times \iota_{i}^{j}}
                                        & U_{j} \times \mathbb{E}_{j}
}
\]
\end{definition}

For example, the sequence $\left( \left( TM_n,\pi_n,M_n \right) , \left( T\varepsilon_n^{n+1},\varepsilon_n^{n+1} \right) \right) _{n \in \mathbb{N}}$ is a strong ascending sequence of Banach vector bundles whenever $ \left( M_{n} \right)_{n\in\mathbb{N}}$ is an ascending sequence which has the direct limit chart property at each point of $x \in M=\underrightarrow{\lim}M_{n}$ whose model $\mathbb{M}_{n}$ is supplemented in $\mathbb{M}_{n+1}$.

\begin{notation}
From now on and for the sake of simplicity, the strong ascending sequence of vector bundles $\left( (E_n,\pi_n,M_n),\left(\lambda_n^{n+1},\varepsilon_n^{n+1} \right) \right)_{n \in \mathbb{N}}$ will be denoted $\left(  E_{n},\pi_{n},M_{n}\right)_{\underrightarrow{n}}$.
\end{notation}

We then have the following result given in \cite{CaPe}.
\begin{proposition}
\label{P_StructureOnDirectLimitOfLinearBundles}
Let $\left( E_{n},\pi_{n},M_{n}\right) _{\underrightarrow{n}}$ be a strong
ascending sequence of Banach vector bundles. We have:
\begin{enumerate}
\item
$\underrightarrow{\lim}E_{n}$ has a structure of not necessarily Hausdorff convenient manifold modelled on the LB-space $\underrightarrow{\lim}\mathbb{M}_{n}\times\underrightarrow{\lim}\mathbb{E}_{n}$ which has a
Hausdorff convenient structure if and only if $M$ is Hausdorff.
\item
$\left(  \underrightarrow{\lim}E_{n},\underrightarrow{\lim}\pi
_{n},\underrightarrow{\lim}M_{n}\right)  $ can be endowed with a structure of
convenient vector bundle whose typical fibre is $\underrightarrow{\lim
}\mathbb{\mathbb{E}}_{n}$.
\end{enumerate}
\end{proposition}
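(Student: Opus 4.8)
The plan is to reduce everything to Theorem \ref{T_LBCmanifold}, by first realizing the total spaces $(E_n)_{n\in\mathbb{N}}$ themselves as an ascending sequence of Banach manifolds and then upgrading the resulting convenient manifold structure to a bundle structure through the direct limit of the trivializations. For the manifold structure, I would note that by \textbf{(ASBVB 1)}--\textbf{(ASBVB 3)} each $\lambda_n^{n+1}:E_n\to E_{n+1}$ is a smooth injection realizing $E_n$ as a closed submanifold of $E_{n+1}$, and that locally $E_n$ is modelled on the Banach space $\mathbb{M}_n\times\mathbb{E}_n$. Since $\mathbb{M}_n$ (resp. $\mathbb{E}_n$) is supplemented in $\mathbb{M}_{n+1}$ (resp. $\mathbb{E}_{n+1}$), the product $\mathbb{M}_n\times\mathbb{E}_n$ is supplemented in $\mathbb{M}_{n+1}\times\mathbb{E}_{n+1}$, whence the model LB-space $\underrightarrow{\lim}(\mathbb{M}_n\times\mathbb{E}_n)=(\underrightarrow{\lim}\mathbb{M}_n)\times(\underrightarrow{\lim}\mathbb{E}_n)=\mathbb{M}\times\mathbb{E}$ is convenient.

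Next I would check that $(E_n)_{n\in\mathbb{N}}$ has the direct limit chart property \textbf{(DLCP)} at every $\xi\in E=\underrightarrow{\lim}E_n$. Setting $x=\pi(\xi)$, assumption \textbf{(ASBVB 4)} furnishes a direct limit chart $(U=\underrightarrow{\lim}U_n,\phi=\underrightarrow{\lim}\phi_n)$ around $x$; composing with the trivializations $\Psi_n$ of \textbf{(ASBVB 5)} yields charts $\big(\pi_n^{-1}(U_n),(\phi_n\times\mathrm{Id}_{\mathbb{E}_n})\circ\Psi_n\big)$ on $E_n$. The commutative diagram of \textbf{(ASBVB 5)} combined with \textbf{(ASC 2)} for $(U_n,\phi_n)$ gives $(\phi_{n+1}\times\mathrm{Id})\circ\Psi_{n+1}\circ\lambda_n^{n+1}=(\iota_n^{n+1}\times\iota_n^{n+1})\circ(\phi_n\times\mathrm{Id})\circ\Psi_n$, which is exactly \textbf{(ASC 2)} for these charts, while \textbf{(ASC 1)} follows since $\lambda_n^{n+1}(\pi_n^{-1}(U_n))\subset\pi_{n+1}^{-1}(U_{n+1})$. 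Theorem \ref{T_LBCmanifold} then endows $E$ with an n.n.H. convenient manifold structure modelled on $\mathbb{M}\times\mathbb{E}$ and carrying the $\mathrm{DL}$-topology. For the Hausdorff equivalence, the zero section embeds $M$ as a closed submanifold of $E$, so $E$ Hausdorff forces $M$ Hausdorff; conversely the fibre factor $\mathbb{E}$ is a Hausdorff convenient space, so in the local model $U\times\mathbb{E}$ the charts separate points of $E$ whenever $M$ is Hausdorff.

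For part (2) I would assemble the linear structure as the direct limit of the $\Psi_n$. Put $\pi=\underrightarrow{\lim}\pi_n$ and $\Psi=\underrightarrow{\lim}\Psi_n:\pi^{-1}(U)\to U\times\mathbb{E}$; by \textbf{(ASBVB 3)} the projections $\pi_n$ are coherent, so $\pi$ is a well-defined conveniently smooth surjection and $\Psi$ is a fibre-preserving homeomorphism that is linear on each fibre. On an overlap $U\cap U'$ the transition is $\Psi'\circ\Psi^{-1}=\underrightarrow{\lim}\big(\Psi_n'\circ\Psi_n^{-1}\big)$, the direct limit of the Banach cocycles $x\mapsto(\Psi_n'\circ\Psi_n^{-1})_x\in\operatorname{GL}(\mathbb{E}_n)$; granting that a coherent direct limit of convenient-smooth maps is again convenient-smooth, this yields a smooth $\operatorname{GL}(\mathbb{E})$-valued cocycle, so the family $\{(U,\Psi)\}$ is a convenient vector bundle atlas with typical fibre $\mathbb{E}=\underrightarrow{\lim}\mathbb{E}_n$.

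The main obstacle I expect is the control of the convenient calculus under direct (LB-)limits rather than projective ones: unlike Proposition \ref{P_ProjectiveLimitsOfDifferentialMaps}, there is no ready-made symmetric statement to invoke, so one must verify directly that the coherent family $(\Psi_n'\circ\Psi_n^{-1})$ passes to a conveniently smooth bundle automorphism of $U\times\mathbb{E}$ and that the resulting bundle charts are compatible with the manifold charts produced in part (1). A secondary delicate point is keeping track of the non-Hausdorff case and confirming that the $\mathrm{DL}$-topology on $E$ coincides with the one induced by the bundle atlas; both are controlled by the strict locality of all constructions, which is precisely why the closed-submanifold and supplemented-model hypotheses of Definition \ref{D_AscendingSequenceBanachVectorBundles} are indispensable.
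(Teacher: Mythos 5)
The paper gives no internal proof of this proposition; it is quoted from \cite{CaPe}, and your argument follows essentially the same route as the cited proof: you realize the total spaces $(E_n)$ as an ascending sequence of Banach manifolds with closed bonding inclusions, verify the direct limit chart property via the composite charts $\bigl(\pi_n^{-1}(U_n),(\phi_n\times\mathrm{Id}_{\mathbb{E}_n})\circ\Psi_n\bigr)$, invoke Theorem \ref{T_LBCmanifold}, and then assemble the bundle atlas from $\Psi=\underrightarrow{\lim}\Psi_n$. The one step you explicitly leave as granted --- that a coherent direct limit of conveniently smooth maps is again conveniently smooth --- is indeed the analytic heart, and you should note that it does \emph{not} hold for arbitrary inductive limits; it holds here because the hypotheses of Definition \ref{D_AscendingSequenceBanachVectorBundles} (each step closed and supplemented in the next) make $\mathbb{E}=\underrightarrow{\lim}\mathbb{E}_n$ and $\mathbb{M}=\underrightarrow{\lim}\mathbb{M}_n$ strict, hence regular, LB-spaces, so every smooth curve into the limit locally factors through some step (cf. \cite{KrMi}); testing along smooth curves then shows that the evaluation $(x,e)\mapsto\bigl(\Psi'\circ\Psi^{-1}\bigr)_x(e)$ is conveniently smooth, which is exactly how the cocycle smoothness is settled in \cite{CaPe}. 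With that lemma supplied, the rest of your proposal is sound: the DLCP bookkeeping via \textbf{(ASBVB 5)} and \textbf{(ASC 2)} is the intended computation, and your Hausdorff equivalence (the zero section embeds $M$ as a closed submanifold of $E$ for necessity; separation of distinct fibres by continuity of $\pi=\underrightarrow{\lim}\pi_n$ and of same-fibre points inside a single chart for sufficiency) matches the cited argument.
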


\end{document}